\theoremstyle{plain}
\newtheorem{theorem}{Theorem}[section]
\newtheorem{lemma}[theorem]{Lemma}
\newtheorem{corollary}[theorem]{Corollary}
\newtheorem{assumption}[theorem]{Assumption}
\newtheorem{definition}[theorem]{Definition}
\theoremstyle{definition}
\theoremstyle{remark}
\let\originalleft\left
\let\originalright\right
\renewcommand{\left}{\mathopen{}\mathclose\bgroup\originalleft}
\renewcommand{\right}{\aftergroup\egroup\originalright}
\newcommand{\norm}[1]{\left\lVert#1\right\rVert}
\newcommand{\abs}[1]{\left\lvert#1\right\rvert}
\newcommand{\bR}{\mathbb{R}}
\newcommand{\bP}[2][]{\Pr\ifthenelse{\isempty{#1}}{}{_{#1}}\left[#2\right]}
\newcommand{\bE}[2][]{\mathop\mathbb{E}\ifthenelse{\isempty{#1}}{}{_{#1}}\left[#2\right]}
\newcommand{\bI}[2][]{\mathop\mathbb{I}\ifthenelse{\isempty{#1}}{}{_{#1}}\left[#2\right]}
\newcommand{\Var}[2][]{\mathbf{Var}\ifthenelse{\isempty{#1}}{}{_{#1}}\left[#2\right]}
\newcommand{\horizonlength}{H}
\newcommand{\minimizer}{\theta}
\newcommand{\dist}{d_{\mathcal{G}}}
\DeclareMathOperator*{\argmin}{arg\,min}
\newcommand{\yiheng}[1]{\textcolor{purple}{[Yiheng says: #1]}}
\newcommand{\judy}[1]{\textcolor{blue}{[Judy says: #1]}}
\icmltitlerunning{Decentralized Online Convex Optimization in Networked Systems}
\begin{document}

\twocolumn[
\icmltitle{Decentralized Online Convex Optimization in Networked Systems}



\icmlsetsymbol{equal}{*}

\begin{icmlauthorlist}
\icmlauthor{Yiheng Lin}{equal,Caltech}
\icmlauthor{Judy Gan}{equal,Columbia}
\icmlauthor{Guannan Qu}{CMU}
\icmlauthor{Yash kanoria}{Columbia}
\icmlauthor{Adam Wierman}{Caltech}
\end{icmlauthorlist}

\icmlaffiliation{Caltech}{Department of Computing and Mathematical Sciences, California Institute of Technology}
\icmlaffiliation{Columbia}{Decision, Risk, and Operations division, Columbia Business School}
\icmlaffiliation{CMU}{Department of Electrical and Computer Engineering, Carnegie Mellon University}

\icmlcorrespondingauthor{Yiheng Lin}{yihengl@caltech.edu}

\icmlkeywords{Machine Learning, ICML}

\vskip 0.3in
]



\printAffiliationsAndNotice{\icmlEqualContribution} 

\begin{abstract}
We study the problem of networked online convex optimization, where each agent individually decides on an action at every time step and agents cooperatively seek to minimize the total global cost over a finite horizon. The global cost is made up of three types of local costs: convex node costs, temporal interaction costs, and spatial interaction costs. In deciding their individual action at each time, an agent has access to predictions of local cost functions for the next $k$ time steps in an $r$-hop neighborhood. Our work proposes a novel online algorithm, Localized Predictive Control (LPC), which generalizes predictive control to multi-agent systems. We show that LPC achieves a competitive ratio of $1 + \tilde{O}(\rho_T^k) + \tilde{O}(\rho_S^r)$ in an adversarial setting, where $\rho_T$ and $\rho_S$ are constants in $(0, 1)$ that increase with the relative strength of temporal and spatial interaction costs, respectively. This is the first competitive ratio bound on decentralized predictive control for networked online convex optimization. Further, we show that the dependence on $k$ and $r$ in our results is near optimal by lower bounding the competitive ratio of any decentralized online algorithm.

\end{abstract}

\section{Introduction}



A wide variety of multi-agent systems can be modeled as optimization tasks in which individual agents must select actions based on local information with the goal of cooperatively learning to minimize a global objective in an uncertain, time varying environment.  This general setting emerges in applications such as formation control   \cite{chen2005, Oh2015}, power systems control \cite{molzahn2017survey, shi2021stability}, and multiproduct price optimization \cite{caro2012clearance, candogan2012optimal}. 
In all these cases, it is key that the algorithms used by agents use only local information due to the computational burden created by the size of the systems, the information constraints in the systems, and the need for fast and/or interpretable decisions.


At this point, there is a mature literature focused on decentralized optimization, e.g.  \citet{bertsekas1989parallel,boyd2011distributed, shi2015extra, nedic2018network}, see \citet{xin2020general} for a survey; however, the design of learning policies for uncertain, time-varying environments requires decentralized \textit{online} optimization. The literature studying decentralized online optimization is still nascent (see the related work section for a discussion of recent papers, e.g. \citet{Li2021coupled,Yuan2021,Yi2020}) and many challenging open questions remain. 
%
%


Three issues of particular importance for real-world applications are the following.

\textit{First,} temporal coupling in actions is often of first-order importance to applications.  For example, startup costs, ramping costs, and switching costs are prominent in settings such as power systems and cloud computing, and lead to penalties for changing actions dramatically over time.  The design of online algorithms to address such temporal interaction costs has received significant attention in the single-agent case recently, e.g, smoothed online optimization \cite{goel2019beyond, lin2020online}, convex body chasing \cite{argue2019chasing, sellke2019chasing}, online optimization with memory \cite{agarwal2019online, shi2020online}, and dynamic pricing \cite{besbes2015intertemporal,chen2018robust}. 

\textit{Second,} spatial interaction costs are of broad importance in practical applications.  Such costs arise because of the need for actions of nearby agents to be aligned with one another, and are prominent in settings such as economic team theory \cite{Marschak1955, marschak1972economic}, combinatorial optimization over graphs \cite{hochba1997approximation, Gamarnik2010}, and statistical inference \cite{wainwright2008graphical}. 
An example is (dynamic) multiproduct pricing, where the price of a product can impact the demand of other related products \cite{song2006measuring}. 

\textit{Third,} leveraging predictions of future costs has long been recognized as a promising way to improve the performance of online agents \cite{morari1999model, lin2012online, badiei2015online, chen2016using, shi2019value, li2020online}. As learning tools become more prominent, the role of predictions is growing.  By collecting data from repeated trials, data-driven learning tools make it possible to provide accurate predictions for near future costs. 
For example, in multiproduct pricing, good demand forecasts can be constructed up to a certain time horizon and are invaluable in setting prices \cite{caro2012clearance}.

In addition to the three issues above, we would like to highlight that existing results for decentralized online optimization focus on designing algorithms with low (static) regret \cite{Hosseini2016, Li2021coupled} 
, i.e., algorithms that (nearly) match the performance of the best static action in hindsight.  In a time-varying environment, it is desirable to instead obtain stronger bounds, such as those on the dynamic regret or competitive ratio, which compare to the dynamic optimal actions instead of the best static action in hindsight, e.g., see results in the centralized setting such as \citet{lin2020online,li2020online,shi2020online}. 


This paper aims to address decentralized online optimization with the three features described above.   In particular, we are motivated by the open question: \textit{Can a decentralized algorithm make use of predictions to be competitive for networked online convex optimization in an adversarial environment when spatial and temporal costs are considered?}

\textbf{Contributions.} 
This paper provides the first competitive algorithm for decentralized learning in networked online convex optimization. Agents in a network must each make a decision at each time step, to minimize a global cost which is the sum of convex node costs, spatial interaction costs and temporal interaction costs. We propose a predictive control framework called Localized Predictive Control (LPC, Algorithm \ref{alg:LPC}) and prove that it achieves a competitive ratio of $1 + \tilde{O}(\rho_T^k) + \tilde{O}(\rho_S^r)$, which approaches 1 exponentially fast as the prediction horizon $k$ and communication radius $r$ increase simultaneously. 
Our results quantify the improvement in competitive ratio from increasing the communication radius $r$ (which also increases the computational requirements) versus increasing the prediction horizon $k$, and imply that -- as a function of problem parameters -- one of the two ``resources'' $k$ and $r$ emerges as the bottleneck to algorithmic performance. 
Given any target competitive ratio, we find the minimum required prediction horizon $k$ and communication radius $r$ as functions of the temporal interaction strength and the spatial interaction strength, resp. 

Further, we show that LPC is order-optimal in terms of $k$ and $r$ by proving a lower bound on the competitive ratio for any online algorithm. We formalize the near optimality of our algorithm by showing that a resource augmentation bound follows from our upper and lower bounds: our algorithm with given $k$ and $r$ performs at least as well as the best possible algorithm that is forced to work with $k'$ and $r'$ which are a constant factor smaller than $k$ and $r$ respectively. 


The algorithm we propose, LPC, is inspired by Model Predictive Control (MPC). After fixing the prediction horizon $k$ and the communication radius $r$, each agent makes an individual decision by solving a $k$-time-step optimization problem, on a local neighborhood centered at itself and with radius $r$. In doing so, the algorithm utilizes all available information and makes a ``greedy'' decision. One benefit of this algorithm is its simplicity and interpretability, which is often important for practical applications. Moreover, since the algorithm is local, the computation needed for each agent is independent of the network size. 


Our main results are enabled by a new analysis methodology which obtains two separate decay factors for the propagation of decision errors (a temporal decaying factor $\rho_T$ and spatial decaying factor $\rho_S$) through a novel perturbation analysis. Specifically, the perturbation analysis seeks to answer the following question: If we perturb the boundary condition of an agent $v$'s $r$-hop neighborhood at the time step which is $\tau$-th later than the present, how does that affect $v$'s current decision, in terms of spatial distance $r$ and temporal distance $\tau$? With our analysis, we are able to bound the impact on $v$'s current decision by $O(\rho_T^{\tau}\rho_S^{r})$, where the decay factors $\rho_T$ and $\rho_S$ increase with the strength of temporal/spatial interactions among individual decisions. This novel analysis is critical for deriving a competitive ratio that distinguishes the decay rate for temporal and spatial distances.

To illustrate the use of our results in a concrete application, \Cref{apx:application} provides a detailed discussion of dynamic multiproduct pricing, which is a central problem in revenue management.  The resulting revenue maximization problem fits into our theoretical framework, and we deduce from our results that LPC guarantees near optimal revenue, in addition to reducing the computational burden \cite{schlosser2016stochastic} and providing interpretable prices \cite{biggs2021model} for products.

\textbf{Related Work.} 
This paper contributes to the literature in three related areas, each of which we describe below.

\textit{Distributed Online Convex Optimization.} Our work relates to a growing literature on distributed online convex optimization with time-varying cost functions over multi-agent networks. Many recent advances have been made including distributed OCO with delayed feedback \cite{Cao2021}, coordinating behaviors among agents \cite{Li2021, Cao2021relative}, and distributed OCO with a time-varying communication graph \cite{Hosseini2016, Akbari2017,Yuan2021, Li2021coupled,Yi2020}. 
A common theme of the previous literature is the idea that agents can only access partial information of time-varying global loss functions, thus requiring local information exchange between neighboring agents. 
To the best of our knowledge, our paper is the first in this literature to provide competitive ratio bounds or consider spatial and temporal costs, e.g., switching costs.  

\textit{Online Convex Optimization (OCO) with Switching Costs.} Online convex optimization with switching costs was first introduced in \citet{lin2012online} to model dynamic power management in data centers. Different forms of cost functions have been studied since then, e.g.,  \citet{chen2018smoothed, shi2020online, lin2020online}, in order to fit a variety of applications from video streaming \cite{joseph2012jointly} to energy markets \cite{kim2014real}. The quadratic form of switching cost was first proposed in \citet{goel2018smoothed} and yields connections to optimal control, which were further explored in \citet{lin2021perturbation}. The literature has focused entirely on the centralized, single-agent setting.  Our paper contributes to this literature by providing the first analysis of switching costs in a networked setting with a decentralized algorithm.

\textit{Perturbation Analysis of Online Algorithms.} Sensitivity analysis of convex optimization problems studying the properties of the optimal solutions as a function of the problem parameters has a long and rich history (see \citet{fiacco1990sensitivity} for a survey). The works that are most related to ours consider the specific class of problems where the decision variables are located on a horizon axis, or consider a general network and aim to show the impact of a perturbation on a decision variable is exponentially decaying in the graph distance from that variable, e.g.,  \citet{shin2021exponential, shin2021controllability, lin2021perturbation}. The idea of using exponentially decaying perturbation bounds to analyze an online algorithm is first proposed in \citet{lin2021perturbation}, where only the temporal dimension is considered. This style of perturbation analysis is key to the proof of our competitive bounds and, to prove our competitive bounds, we provide new perturbation results that separate the impact of spatial and temporal costs in a network for the first time. 
Additionally, our analysis is enabled by new results on the decay rate of a product of exponential decay matrices, which may be of independent interest.


\textbf{Notation.}  A complete notation table can be found in Appendix \ref{apx:notation-table}.  Here we describe the most commonly used notation.  In a graph $\mathcal{G} = (\mathcal{V}, \mathcal{E})$, we use $d_\mathcal{G}(v, u)$ to denote the distance (i.e. the length of the shortest path) between two vertices $v$ and $u$. 
$N_v^r$ denotes the $r$-hop neighborhood of vertex $v$, i.e., $N_v^r := \{u \in \mathcal{V}\mid d_\mathcal{G}(u, v) \leq r\}$.  $\partial N_v^r$ denotes the boundary of $N_v^r$, i.e., $\partial N_v^r = N_v^r\setminus N_v^{r-1}$. We  generalize these notations to temporal-spatial graphs as follows. Let $\times$ denote the Cartesian product of sets, and
$N_{(t, v)}^{(k, r)} :={} \{\tau \in \mathbb{Z}\mid t \leq \tau < t+k\} \times N_v^r,$ $\partial N_{(t, v)}^{(k, r)} :={} N_{(t, v)}^{(k, r)} \setminus N_{(t, v)}^{(k-1, r-1)}.$
For any subset of vertices $S$, we use $\mathcal{E}(S)$ to denote the set of all edges that have both endpoints in $S$, and define $S_+ = \{u \in \mathcal{V} \mid \exists v \in S \text{ s.t. } \dist(u, v) \leq 1\}$ (i.e., $S$ and its 1-hop neighbors). Let $\Delta$ denote the maximum degree of any vertex in $\mathcal{G}$; $h(r) := \sup_v \abs{\partial N_v^r}$. We say a function is in $C^2$ if it is twice continuously differentiable. We  use $\norm{\cdot}$ to denote the (Euclidean) 2-norm for vectors and the induced 2-norm for matrices.

\section{Problem Setting}\label{sec:setting}


We consider a set of agents in a networked system where each agent individually decides on an action at each time step and the agents cooperatively seek to minimize a global cost over a finite time horizon $\horizonlength$. Specifically, we consider a graph $\mathcal{G} = \left(\mathcal{V}, \mathcal{E}\right)$ of agents. Each vertex $v \in \mathcal{V}$ denotes an individual agent, and two agents $v$ and $u$ interact with each other if and only if they are connected by an undirected edge $(v, u) \in \mathcal{E}$. At each time step $t= 1, 2, \dots, \horizonlength$, each agent $v$ picks an $n$-dimensional local action $x_t^v \in D_t^v$, where $n$ is a positive integer and $D_t^v \subset \mathbb{R}^n$ is a convex set of feasible actions. The global action at time $t$ is the vector of agent actions
$x_t = \{x_t^v\}_{v \in \mathcal{V}},$ and incurs a global state cost, which is the sum of three types of local cost functions:
\vspace{-.1in}\begin{itemize}[nolistsep, leftmargin=*]
    \item \textbf{Node costs}: Each agent $v$ incurs a time-varying node cost $f_t^v(x_t^v)$, which characterizes agent $v$'s local preference for its local action $x_t^v$.
    \item \textbf{Temporal interaction costs}: Each agent $v$ incurs a time-varying temporal interaction cost $c_t^v(x_t^v, x_{t-1}^v)$, that characterizes how agent $v$'s previous local action $x_{t-1}^v$ interacts with its current local action $x_{t}^v$.
    \item \textbf{Spatial interaction costs}: Each edge $e = (v, u)$ incurs a time-varying spatial interaction cost\footnote{Since $e$ is an undirected edge, the order we write the two inputs (the action of $v$ and the action of $u$) does not matter. Note that $s_t^e$ can be asymmetric for agents $v$ and $u$, e.g., $s_t^e(x_t^v, x_t^u) = s_t^e(x_t^u, x_t^v) = \norm{x_t^v + 2 x_t^u}^2$.} $s_t^e(x_t^v, x_t^u)$. This characterizes how agents $v$ and $u$'s current local actions affect each other.
\end{itemize}

In our model, the node cost is the part of the cost that only depends the agent's current local action. If the other two types of costs are zero functions, each agent will trivially pick the minimizer of its node cost. Temporal interaction costs encourage agents to choose a local action that is ``compatible'' with their previous local action.  For example, a temporal interaction  could be a switching cost which penalizes large deviations from the previous action, in order to make the trajectory of local actions ``smooth''.  Such switching costs can be found in work on single-agent online convex optimization, e.g., \citet{chen2018smoothed, goel2019beyond, lin2020online}. Spatial interaction costs, on the other hand, can be used to enforce some collective behavior among the agents. For example, spatial interaction can model the probability that one agent's actions affects its neighbor's actions in diffusion processes on social networks \cite{Kempe2015}; or model interactions between complement/substitute products in multiproduct pricing \cite{candogan2012optimal}.



Our analysis is based on standard smoothness and convexity assumptions on the local cost functions (see Appendix \ref{apx:notation-table} for definitions of smoothness and strong convexity):  


\begin{assumption}\label{assump:costs-and-feasible-sets}
For $\mu > 0, \ell_f < \infty, \ell_T < \infty, \ell_S < \infty$, the local cost functions and feasible sets for all $t, v, e$ satisfy:
\begin{itemize}[nolistsep, leftmargin=*]
    \item $f_t^v: \mathbb{R}^n \to \mathbb{R}_{\geq0}$ is $\mu$-strongly convex, $\ell_f$-smooth, and in $C^2$;
    \item $c_t^v: \mathbb{R}^n \times \mathbb{R}^n \to \mathbb{R}_{\geq0}$ is convex, $\ell_T$-smooth, and in $C^2$;
    \item $s_t^e: \mathbb{R}^n \times \mathbb{R}^n \to \mathbb{R}_{\geq0}$ is convex, $\ell_S$-smooth, and in $C^2$;
    \item $D_t^v \subseteq \mathbb{R}^n$ satisfies $int(D_t^v) \not = \emptyset$ and can be written as
    $D_t^v := \{x_t^v \in \mathbb{R}^n \mid (g_t^v)_i(x_t^v) \leq 0, \forall 1 \leq i \leq m_t^v\},$
    where each  $(g_t^v)_i: \mathbb{R}^n \to \mathbb{R}$ is a convex function in $C^2$.
\end{itemize}
\end{assumption}

Note that the assumptions above are common, even in the case of single-agent online convex optimization, e.g., see \citet{li2020online, shi2020online, lin2021perturbation}.

It is useful to separate the global stage costs into two parts based on whether the cost term depends only on the current global action or whether it also depends on the previous action. Specifically, the part that only depends on the current global action $x_t$ is the sum of all node costs and spatial interaction costs. We refer to this component as the \textit{(global) hitting cost} and denote it as
\[f_t(x_t) := \sum_{v \in \mathcal{V}} f_t^v(x_t^v) + \sum_{(v, u) \in \mathcal{E}} s_t^{(v, u)}(x_t^v, x_t^u).\]
The rest of the global stage cost involves the current global action $x_t$ and the previous global action $x_{t-1}$. We refer to it as the \textit{(global) switching cost} and denote it as
\[c_t(x_t, x_{t-1}) = \sum_{v \in \mathcal{V}} c_t^v(x_t^v, x_{t-1}^v).\]
Combining the global hitting and switching costs, the networked agents work cooperatively to minimize the total global stage costs in a finite horizon $\horizonlength$ starting from a given initial global action $x_0$ at time step $0$:
$cost(ALG) := \sum_{t=1}^\horizonlength \left(f_t(x_t) + c_t(x_t, x_{t-1})\right),$
where $ALG$ denotes the decentralized online algorithm used by the agents.
The offline optimal cost is the clairvoyant minimum cost one can incur on the same sequence of cost functions and the initial global action $x_0$ at time step $0$, i.e.,
$cost(OPT) := \min_{x_{1:\horizonlength}} \sum_{t=1}^\horizonlength \left(f_t(x_t) + c_t(x_t, x_{t-1})\right).$

We measure the performance of any online algorithm ALG by the competitive ratio (CR), which is a widely-used  metric in the literature of online optimization, e.g., \citet{chen2018smoothed, goel2019beyond, argue2020dimension}. 
\begin{definition}
The competitive ratio of online algorithm $ALG$ is the supremum of $cost(ALG)/cost(OPT)$ over all possible problem instances, i.e.,
$CR(ALG) \coloneqq \sup_{\mathcal{G}, \horizonlength, x_0, \{f_t^v, c_t^v, s_t^e, D_t^v\}} cost(ALG)/cost(OPT).$
\end{definition}

Finally, we define the partial hitting and switching costs over subsets of the agents.  In particular, for a subset of agents $S \subseteq \mathcal{V}$, we denote the joint action over $S$ as $x_t^S := \{x_t^v \mid v \in S\}$ and define the partial hitting cost and partial switching cost over $S$ as
\begin{align}
    f_t^{S}(x_t^{S_+}) :={}& \sum_{v \in S} f_t^v(x_t^v) + \sum_{(v, u) \in \mathcal{E}(S_+)} s_t^{(v, u)}(x_t^v, x_t^u),\nonumber\\
    c_t^{S}(x_t^S, x_{t-1}^S) :={}& \sum_{v \in S} c_t^v(x_t^v, x_{t-1}^v),
    \label{eq:partial-costs}
\end{align}
This notation is useful for presenting decentralized online algorithms where the optimizations are performed over the $r$-hop neighborhood of each agent.


\subsection{Information Availability Model}\label{sec:look-ahead-and-communication}

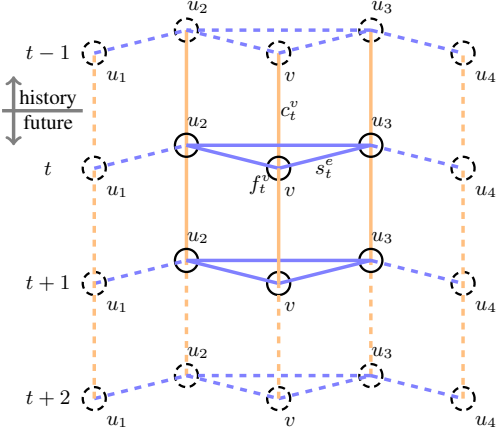
\begin{figure}
    \centering
    \resizebox{.4\textwidth}{!}{%
    \begin{tikzpicture}
\draw[draw=orange!50!white, line width=1.5pt, dashed] (40pt, 10pt) -- (40pt, 60pt);
\draw[draw=orange!50!white, line width=1.5pt] (40pt, 60pt) -- (40pt, 110pt);
\draw[draw=orange!50!white, line width=1.5pt] (40pt, 110pt) -- (40pt, 160pt);
\draw[draw=orange!50!white, line width=1.5pt, dashed] (120pt, 10pt) -- (120pt, 60pt);
\draw[draw=orange!50!white, line width=1.5pt] (120pt, 60pt) -- (120pt, 110pt);
\draw[draw=orange!50!white, line width=1.5pt] (120pt, 110pt) -- (120pt, 160pt);
\draw[draw=orange!50!white, line width=1.5pt] (80pt, 100pt) -- (80pt, 150pt);

\draw[draw=orange!50!white, line width=1.5pt, dashed] (0pt, 110pt) -- (0pt, 150pt);
\draw[draw=orange!50!white, line width=1.5pt, dashed] (160pt, 110pt) -- (160pt, 150pt);

\begin{scope}
    \draw[draw=black, line width=1pt, densely dashed] (0pt, 0pt) circle (5pt); 
    \node at (10pt, -10pt) {\footnotesize $u_1$};
    
    \draw[draw=black, line width=1pt, densely dashed] (40pt, 10pt) circle (5pt); 
    \node at (45pt, 20pt) {\footnotesize $u_2$};
    
    \draw[draw=black, line width=1pt, densely dashed] (80pt, 0pt) circle (5pt); 
    \node at (85pt, -10pt) {\footnotesize $v$};
    
    \draw[draw=black, line width=1pt, densely dashed] (120pt, 10pt) circle (5pt); 
    \node at (125pt, 20pt) {\footnotesize $u_3$};
    
    \draw[draw=black, line width=1pt, densely dashed] (160pt, 0pt) circle (5pt); 
    \node at (170pt, -10pt) {\footnotesize $u_4$};
    
    \draw[draw=blue!50!white, line width=1.5pt, dashed] (0pt, 0pt) -- (40pt, 10pt);
    \draw[draw=blue!50!white, line width=1.5pt, dashed] (40pt, 10pt) -- (80pt, 0pt);
    \draw[draw=blue!50!white, line width=1.5pt, dashed] (120pt, 10pt) -- (40pt, 10pt);
    \draw[draw=blue!50!white, line width=1.5pt, dashed] (80pt, 0pt) -- (120pt, 10pt);
    \draw[draw=blue!50!white, line width=1.5pt, dashed] (120pt, 10pt) -- (160pt, 0pt);
    
    \node at (-20pt, 0pt) {\footnotesize $t + 2$};
\end{scope}

\begin{scope}[yshift=50pt]
    \draw[draw=black, line width=1pt, densely dashed] (0pt, 0pt) circle (5pt); 
    \node at (10pt, -10pt) {\footnotesize $u_1$};
    
    \draw[draw=black, line width=1pt] (40pt, 10pt) circle (5pt); 
    \node at (45pt, 20pt) {\footnotesize $u_2$};
    
    \draw[draw=black, line width=1pt] (80pt, 0pt) circle (5pt); 
    \node at (85pt, -10pt) {\footnotesize $v$};
    
    \draw[draw=black, line width=1pt] (120pt, 10pt) circle (5pt); 
    \node at (125pt, 20pt) {\footnotesize $u_3$};
    
    \draw[draw=black, line width=1pt, densely dashed] (160pt, 0pt) circle (5pt); 
    \node at (170pt, -10pt) {\footnotesize $u_4$};
    
    \draw[draw=blue!50!white, line width=1.5pt, dashed] (0pt, 0pt) -- (40pt, 10pt);
    \draw[draw=blue!50!white, line width=1.5pt] (40pt, 10pt) -- (80pt, 0pt);
    \draw[draw=blue!50!white, line width=1.5pt] (120pt, 10pt) -- (40pt, 10pt);
    \draw[draw=blue!50!white, line width=1.5pt] (80pt, 0pt) -- (120pt, 10pt);
    \draw[draw=blue!50!white, line width=1.5pt, dashed] (120pt, 10pt) -- (160pt, 0pt);
    
    \node at (-20pt, 0pt) {\footnotesize $t + 1$};
\end{scope}

\begin{scope}[yshift=100pt]
    \draw[draw=black, line width=1pt, densely dashed] (0pt, 0pt) circle (5pt); 
    \node at (10pt, -10pt) {\footnotesize $u_1$};
    
    \draw[draw=black, line width=1pt] (40pt, 10pt) circle (5pt); 
    \node at (45pt, 20pt) {\footnotesize $u_2$};
    
    \draw[draw=black, line width=1pt] (80pt, 0pt) circle (5pt); 
    \node at (85pt, -10pt) {\footnotesize $v$};
    
    \draw[draw=black, line width=1pt] (120pt, 10pt) circle (5pt); 
    \node at (125pt, 20pt) {\footnotesize $u_3$};
    
    \draw[draw=black, line width=1pt, densely dashed] (160pt, 0pt) circle (5pt); 
    \node at (170pt, -10pt) {\footnotesize $u_4$};
    
    \draw[draw=blue!50!white, line width=1.5pt, dashed] (0pt, 0pt) -- (40pt, 10pt);
    \draw[draw=blue!50!white, line width=1.5pt] (40pt, 10pt) -- (80pt, 0pt);
    \draw[draw=blue!50!white, line width=1.5pt] (120pt, 10pt) -- (40pt, 10pt);
    \draw[draw=blue!50!white, line width=1.5pt] (80pt, 0pt) -- (120pt, 10pt);
    \draw[draw=blue!50!white, line width=1.5pt, dashed] (120pt, 10pt) -- (160pt, 0pt);
    
    \node at (-20pt, 0pt) {\footnotesize $t$};
    
    \node at (72pt, -7pt) {\footnotesize $f_t^v$};
    
    \node at (85pt, 25pt) {\footnotesize $c_t^v$};
    
    \node at (100pt, 0pt) {\footnotesize $s_t^e$};
\end{scope}

\begin{scope}[yshift=150pt]
    \draw[draw=black, line width=1pt, densely dashed] (0pt, 0pt) circle (5pt); 
    \node at (10pt, -10pt) {\footnotesize $u_1$};
    
    \draw[draw=black, line width=1pt, densely dashed] (40pt, 10pt) circle (5pt); 
    \node at (45pt, 20pt) {\footnotesize $u_2$};
    
    \draw[draw=black, line width=1pt, densely dashed] (80pt, 0pt) circle (5pt); 
    \node at (85pt, -10pt) {\footnotesize $v$};
    
    \draw[draw=black, line width=1pt, densely dashed] (120pt, 10pt) circle (5pt); 
    \node at (125pt, 20pt) {\footnotesize $u_3$};
    
    \draw[draw=black, line width=1pt, densely dashed] (160pt, 0pt) circle (5pt); 
    \node at (170pt, -10pt) {\footnotesize $u_4$};
    
    \draw[draw=blue!50!white, line width=1.5pt, dashed] (0pt, 0pt) -- (40pt, 10pt);
    \draw[draw=blue!50!white, line width=1.5pt, dashed] (40pt, 10pt) -- (80pt, 0pt);
    \draw[draw=blue!50!white, line width=1.5pt, dashed] (120pt, 10pt) -- (40pt, 10pt);
    \draw[draw=blue!50!white, line width=1.5pt, dashed] (80pt, 0pt) -- (120pt, 10pt);
    \draw[draw=blue!50!white, line width=1.5pt, dashed] (120pt, 10pt) -- (160pt, 0pt);
    
    \node at (-20pt, 0pt) {\footnotesize $t-1$};
    
    \node at (-20pt, -20pt) {\footnotesize history};
    \draw[draw=black!50!white, line width=1.5pt] (-40pt, -25pt) -- (-5pt, -25pt);
    \node at (-20pt, -30pt) {\footnotesize future};
    \draw[draw=black!50!white, line width=1.5pt, <->] (-35pt, -10pt) -- (-35pt, -40pt);
\end{scope}

\draw[draw=orange!50!white, line width=1.5pt, dashed] (0pt, 0pt) -- (0pt, 100pt);
\draw[draw=orange!50!white, line width=1.5pt, dashed] (80pt, 0pt) -- (80pt, 50pt);
\draw[draw=orange!50!white, line width=1.5pt] (80pt, 50pt) -- (80pt, 100pt);
\draw[draw=orange!50!white, line width=1.5pt, dashed] (160pt, 0pt) -- (160pt, 100pt);
\end{tikzpicture}
}
    \caption{Illustration of available information for agent $v$ at time $t$ in networked online convex optimization with $k = 2$ and $r = 1$, for the network with $\mathcal{V} = \{u_1, u_2, v, u_3, u_4\}$ and $\mathcal{E} = \{(u_1,u_2), (u_2,v), (v,u_3), (u_2, u_3), (u_3, u_4) \})$.}
    \label{fig:information}
\end{figure}


We assume that each agent has access to local cost functions up to a \emph{prediction horizon} $k$ into the future,  for themselves and their neighborhood up to a \emph{communication radius} $r$. In more detail, recall that $N_v^r$ denotes the $r$-hop neighborhood of agent $v$, i.e., $N_v^r := \{u \in \mathcal{V}\mid d_\mathcal{G}(u, v) \leq r\}$. Before picking a local action $x_t^v$ at time $t$, agent $v$ can observe $k$ steps of future node costs, temporal interaction costs, and spatial interaction costs within its $r$-hop neighborhood, 
    $\{\{(f_\tau^u, c_\tau^u)\mid u \in N_v^r\}, \{s_\tau^e\mid e \in \mathcal{E}(N_v^r)\}\}_{t \leq \tau < t+k}\, ,$
and the previous local actions in $N_v^r$: $\{x_{t-1}^u\mid u \in N_v^r\}$. 

We provide an illustration of the local cost functions known to agent $v$ at time $t$ in Figure \ref{fig:information}. In the figure, the black circles, blue lines, and orange lines denote the node costs, temporal interaction costs, and spatial interaction costs respectively. The known functions are marked by solid lines. Note that, in addition to the local cost functions, agent $v$ also knows the local actions in $N_v^r$ at time $t-1$, which are not illustrated in the figure. 

To simplify notation, in cases when the prediction horizon exceeds the whole horizon length $\horizonlength$, we adopt the convention that $f_t^v(x_t^v) = \frac{\mu}{2}\norm{x_t^v}^2$, $c_t^v \equiv s_t^e \equiv 0$ and $D_t^v = \mathbb{R}^n$ for $t > \horizonlength$. These extended definitions do not affect our original problem with horizon $\horizonlength$.

As in many previous works  studying the power of predictions in the online optimization literature, e.g., \citet{yu2020power, lin2020online, li2020online, lin2021perturbation}, we assume the $k$-step predictions of cost functions are \emph{exact} and leave the case of \emph{inexact} predictions for future work. This model is reasonable in the case where the predictors can be trained to be very accurate for the near future. Although we focus on exact predictions throughout this paper, we also discuss how to extend this available information model to include inexact predictions in Appendix \ref{sec:roadmap-to-inexact}.




\section{Algorithm and Main Results}\label{sec:main}

\begin{algorithm}[htb]
   \caption{Localized Predictive Control (for agent $v$)}
   \label{alg:LPC}
\begin{algorithmic}
   \STATE {\bfseries Parameters:} $k$ and $r$.
   \FOR{$t=1$ {\bfseries to} $\horizonlength$}
   \STATE Receive information $\{x_{t-1}^u\mid u \in N_v^r\}$ and
   \[\{\{(f_\tau^u, c_\tau^u)\mid u \in N_v^r\}, \{s_\tau^e\mid e \in \mathcal{E}(N_v^r)\}\}_{t \leq \tau < t+k}.\]
   \STATE Choose local action $x_t^v$ to be the $(t, v)$-th element in
   \[\psi_{(t, v)}^{(k, r)}\!\left(\!\{x_{t-1}^u\!\mid\! u\!\in\! N_v^r\}, \big\{\minimizer_\tau^u\!\mid\! (\tau, u) \!\in\! \partial N_{(t, v)}^{(k, r)}\big\}\!\right)\]
   the solution of \eqref{equ:LPC-opt-problem}, where $\minimizer_\tau^u \coloneqq \arg\min_{y \in D_\tau^u} f_\tau^u(y)$.
   \ENDFOR
\end{algorithmic}
\end{algorithm}

In this section we present our main results, which show that our simple and practical LPC algorithm can achieve an order-optimal competitive ratio for the networked online convex optimization problem.  We first introduce LPC  in Section \ref{sec:main:alg}. Then, we present the key idea that leads to our  competitive ratio bound: a novel perturbation-based analysis (Section \ref{sec:main:perturb}).  Next, we use our perturbation analysis to derive bounds on the competitive ratio in Section \ref{sec:main:CR}. Finally, we show that the competitive ratio of LPC is order-optimal in Section \ref{sec:main:lower}. An outline that highlights the major novelties in our proofs can be found in Appendix \ref{apx:proof-outline}.

\subsection{Localized Predictive Control (LPC)}\label{sec:main:alg}
The design of LPC is inspired by the classical model predictive control (MPC) framework \cite{garcia1989model}, which leverages all available information at the current time step to decide the current local action ``greedily''. In our context, when an agent $v$ wants to decide its action $x_t^v$ at time $t$, the available information includes previous local actions in the $r$-hop neighborhood and $k$-step predictions of all local node costs, temporal/spatial interaction costs. The boundaries of all available information, which are formed by $\{t-1\} \times N_v^r$ and $\partial N_{(t, v)}^{(k, r)}$, are illustrated in Figure \ref{fig:decision_boundary}. 

The pseudocode for LPC is presented in Algorithm \ref{alg:LPC}. For each agent $v$ at time step $t$, LPC fixes the actions on the boundaries of available information and then solves for the optimal actions inside the boundaries. Specifically, define $\psi_{(t, v)}^{(k, r)}\left(\{y_{t-1}^u\mid u \in N_v^r\}, \{z_\tau^u\mid (\tau, u) \in \partial N_{(t, v)}^{(k, r)}\}\right)$ as the optimal solution of the problem
\begin{align}\label{equ:LPC-opt-problem}
     \min & \sum_{\tau = t}^{t+k-1} \left(f_\tau^{(N_v^{r-1})}\left(x_\tau^{(N_v^r)}\right) + c_\tau^{(N_v^r)}\left(x_\tau^{(N_v^r)}, x_{\tau - 1}^{(N_v^r)}\right) \right)\nonumber\\*
     \text{ s.t. }& x_{t-1}^u = y_{t-1}^u, \forall u \in N_v^r,\nonumber\\*
     & x_\tau^u = z_\tau^u, \forall (\tau, u) \in \partial N_{(t, v)}^{(k, r)},\\*
     & x_\tau^u \in D_\tau^u, \forall (\tau, u) \in N_{(t, v)}^{(k-1, r-1)},\nonumber
\end{align}
where the partial hitting cost and partial switching cost $f_\tau^S$ and $c_\tau^S$ for a subset $S$ of agents 
were defined in \eqref{eq:partial-costs}.
Note that $\psi_{(t, v)}^{(k, r)}\left(\{y_{t-1}^u\}, \{z_\tau^u\}\right)$ is a matrix of actions (in $\bR^n$)
indexed by $(\tau, u) \in N_{(t, v)}^{(k-1, r-1)}$.
(When the context is clear, we use the shorthand $\psi_{(t, v)}^{(k, r)}\left(\{y_{t-1}^u\}, \{z_\tau^u\}\right)$.) 
Once the parameters $\{y_{t-1}^u\}$ and $\{z_\tau^u\}$ are fixed, the agent $v$ can leverage its knowledge of the local cost functions to solve \eqref{equ:LPC-opt-problem}. 

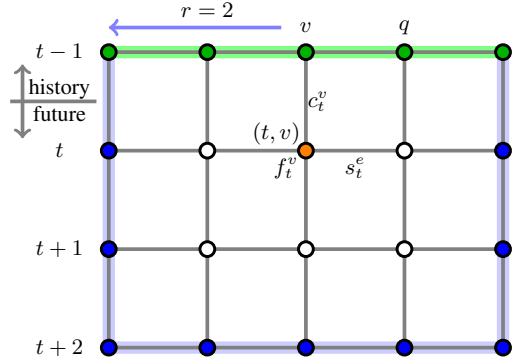
\begin{figure}[htb]
    \centering
    \resizebox{.4\textwidth}{!}{%
    \begin{tikzpicture}

\draw[draw=blue!20!white, line width=5pt] (0pt, 0pt) -- (160pt, 0pt);
\draw[draw=blue!20!white, line width=5pt] (0pt, 0pt) -- (0pt, 120pt);
\draw[draw=blue!20!white, line width=5pt] (160pt, 0pt) -- (160pt, 120pt);
\draw[draw=green!50!white, line width=5pt] (0pt, 120pt) -- (160pt, 120pt);

\draw[draw=black!50!white, line width=1.5pt] (0pt, 0pt) -- (0pt, 120pt);
\draw[draw=black!50!white, line width=1.5pt] (40pt, 0pt) -- (40pt, 120pt);
\draw[draw=black!50!white, line width=1.5pt] (80pt, 0pt) -- (80pt, 120pt);
\draw[draw=black!50!white, line width=1.5pt] (120pt, 0pt) -- (120pt, 120pt);
\draw[draw=black!50!white, line width=1.5pt] (160pt, 0pt) -- (160pt, 120pt);

\begin{scope}
    \draw[draw=black!50!white, line width=1.5pt] (0pt, 0pt) -- (160pt, 0pt);

    \filldraw[color=black, fill=blue, very thick](0pt,0pt) circle (3pt);
    
    \filldraw[color=black, fill=blue, very thick](40pt,0pt) circle (3pt);
    
    \filldraw[color=black, fill=blue, very thick](80pt,0pt) circle (3pt);
    
    \filldraw[color=black, fill=blue, very thick](120pt,0pt) circle (3pt);
    
    \filldraw[color=black, fill=blue, very thick](160pt,0pt) circle (3pt);
    
    \node at (-20pt, 0pt) {\footnotesize $t + 2$};
\end{scope}

\begin{scope}[yshift=40pt]
    \draw[draw=black!50!white, line width=1.5pt] (0pt, 0pt) -- (160pt, 0pt);

    \filldraw[color=black, fill=blue, very thick](0pt,0pt) circle (3pt);
    
    \filldraw[color=black, fill=white, very thick](40pt,0pt) circle (3pt);
    
    \filldraw[color=black, fill=white, very thick](80pt,0pt) circle (3pt);
    
    \filldraw[color=black, fill=white, very thick](120pt,0pt) circle (3pt);
    
    \filldraw[color=black, fill=blue, very thick](160pt,0pt) circle (3pt);
    
    \node at (-20pt, 0pt) {\footnotesize $t + 1$};
\end{scope}

\begin{scope}[yshift=80pt]
    \draw[draw=black!50!white, line width=1.5pt] (0pt, 0pt) -- (160pt, 0pt);

    \filldraw[color=black, fill=blue, very thick](0pt,0pt) circle (3pt);
    
    \filldraw[color=black, fill=white, very thick](40pt,0pt) circle (3pt);
    
    \filldraw[color=black, fill=orange, very thick](80pt,0pt) circle (3pt);
    
    \filldraw[color=black, fill=white, very thick](120pt,0pt) circle (3pt);
    
    \filldraw[color=black, fill=blue, very thick](160pt,0pt) circle (3pt);
    
    \node at (-20pt, 0pt) {\footnotesize $t$};
    
    \node at (68pt, 7pt) {\footnotesize $(t, v)$};
    
    \node at (72pt, -7pt) {\footnotesize $f_t^v$};
    
    \node at (85pt, 20pt) {\footnotesize $c_t^v$};
    
    \node at (100pt, -7pt) {\footnotesize $s_t^e$};
\end{scope}

\begin{scope}[yshift=120pt]
    \draw[draw=black!50!white, line width=1.5pt] (0pt, 0pt) -- (160pt, 0pt);

    \filldraw[color=black, fill=green!70!black, very thick](0pt,0pt) circle (3pt);
    
    \filldraw[color=black, fill=green!70!black, very thick](40pt,0pt) circle (3pt);
    
    \filldraw[color=black, fill=green!70!black, very thick](80pt,0pt) circle (3pt);
    
    \filldraw[color=black, fill=green!70!black, very thick](120pt,0pt) circle (3pt);
    
    \filldraw[color=black, fill=green!70!black, very thick](160pt,0pt) circle (3pt);
    
    \node at (-20pt, 0pt) {\footnotesize $t -1$};
    
    \node at (80pt, 10pt) {\footnotesize $v$};
    
    \node at (120pt, 10pt) {\footnotesize $q$};
    
    \draw[draw=blue!50!white, line width=1.5pt, ->] (70pt, 10pt) -- (0pt, 10pt);
    
    \node at (40pt, 17pt) {\footnotesize $r = 2$};
    
    \node at (-20pt, -15pt) {\footnotesize history};
    \draw[draw=black!50!white, line width=1.5pt] (-40pt, -20pt) -- (-5pt, -20pt);
    \node at (-20pt, -25pt) {\footnotesize future};
    \draw[draw=black!50!white, line width=1.5pt, <->] (-35pt, -5pt) -- (-35pt, -35pt);
\end{scope}

\end{tikzpicture}
}
    \caption{
    Illustration of LPC with $k = 3, r = 2$ on a line graph (the underlying graph is replicated over the time dimension). The orange node marks the decision variable at $(t, v)$. The green part denotes the decisions in $N_v^r$ at time $(t-1)$. The blue ``U'' shape denotes the boundary of available information for node $v$ at time $t$. Edge $e \coloneqq (v, q)$.
    }
    \label{fig:decision_boundary}
\end{figure}

LPC fixes the parameters $\{y_{t-1}^u\}$ to be $\{x_{t-1}^u\}$, which are the previous local actions in $N_v^r$, and fixes the parameters $\{z_{\tau}^u\}$ to be the minimizers of local node cost functions at nodes in $\partial N_{(t, v)}^{(k, r)}$. The selection of the parameters at nodes in $\partial N_{(t, v)}^{(k, r)}$ plays a similar role as the terminal cost of classical MPC in centralized settings. 

For a single-agent system, MPC-style algorithms are perhaps the most prominent approach for optimization-based control \cite{garcia1989model} because of their simplicity and excellent performance in practice. 
LPC extends the ideas of MPC to a multi-agent setting in a networked system by leveraging available information in both the temporal and spatial dimensions, whereas classical MPC focuses only on the temporal dimension. This change leads to significant technical challenges in the analysis. 



\subsection{Perturbation Analysis}\label{sec:main:perturb}


The key idea underlying our analysis of LPC is that the impact of perturbations to the actions at the boundaries of the available information of an agent decay quickly, in fact exponentially fast, in the distance of the boundary from the agent. This quick decay means that small errors cannot build up to hurt algorithm performance. 

In this section, we formally study such perturbations by deriving several new results which generalize perturbation bounds for online convex optimization problems on networks. Our bounds capture both the effect of temporal interactions as well as spatial interactions between agent actions, which is a more challenging problem compared to previous literature which considers either temporal interactions \cite{lin2021perturbation} or spatial interactions \cite{shin2021exponential} but not both simultaneously. 

More specifically, recall from Section \ref{sec:main:alg} that for each agent $v$ at time $t$, LPC solves an optimization problem $\psi_{(t, v)}^{(k, r)}$ where actions on the boundaries of available information (i.e., $\{t-1\} \times N_v^r$ and $\partial N_{(t, v)}^{(k, r)}$) are fixed. By the principle of optimality, we know that if the actions on the boundaries are selected to be identical with the offline optimal actions, the agent can decide its current action optimally by solving $\psi_{(t, v)}^{(k, r)}$. However, due to the limits on the prediction horizon and communication radius, LPC can only approximate the offline optimal actions on the boundaries (we do this by using the minimizer of node cost functions). The key idea to our analysis of the optimality gap of LPC is by first asking: \textit{If we perturb the parameters of $\psi_{(t, v)}^{(k, r)}$, i.e., the actions on the information boundaries, how large is the resulting change in a local action $x_{t_0}^{v_0}$ for $(t_0, v_0) \in N_{(t, v)}^{(k, r)}\setminus \partial N_{(t, v)}^{(k, r)}$ (in the optimal solution to \eqref{equ:LPC-opt-problem})?}

Ideally, we would like the above impact to decay exponentially fast in the graph distance between node $v_0$ and the communication boundary for node $v$ (i.e., $r$ minus the graph distance between $v_0$ and $v$), as well as in the temporal distance between $t_0$ and $t$. 
We formalize this goal as \textit{exponentially decaying local perturbation bound} in \Cref{thm:networked-exp-decay:meta}. We then show in \Cref{thm:networked-exp-decay} and \Cref{thm:networked-exp-decay-tight} that such bounds hold under appropriate assumptions.


\begin{definition}
\label{thm:networked-exp-decay:meta}
Define
$x_{t_0}^{v_0} \coloneqq \psi_{(t, v)}^{(k, r)}\left(\{y_{t-1}^u\}, \{z_\tau^u\}\right)_{(t_0, v_0)},$ and $(x_{t_0}^{v_0})' \coloneqq \psi_{(t, v)}^{(k, r)}\left(\{(y_{t-1}^u)'\}, \{(z_\tau^u)'\}\right)_{(t_0, v_0)}$ for arbitrary boundary parameters 
$\{(y_{t-1}^u)\}, \{(z_\tau^u)\}$ and $\{(y_{t-1}^u)'\}, \{(z_\tau^u)'\}$.
We say an \textbf{exponentially decaying local perturbation bound} holds if for non-negative constants
\begin{align*}
    C_1 ={}& C_1(\ell_T/\mu, (\Delta\ell_S)/\mu) < \infty,\\
    C_2 ={}& C_2(\ell_T/\mu, (\Delta\ell_S)/\mu) < \infty,\\
    \rho_T ={}& \rho_T(\ell_T/\mu) < 1, \rho_S = \rho_S((\Delta\ell_S)/\mu) < 1,
\end{align*}
for any $(t_0,v_0)$ 
and arbitrary boundary parameters $\{(y_{t-1}^u)'\}, \{(z_\tau^u)'\}, \{(y_{t-1}^u)\}, \{(z_\tau^u)\}$, we have: 
\begin{align*}
    &\norm{x_{t_0}^{v_0} - (x_{t_0}^{v_0})'}\\* \leq{}& C_1\sum_{(u, \tau) \in \partial N_{(t, v)}^{(k,r)} }  \rho_T^{|t_0-\tau|} \rho_S^{\dist(v_0, u)}
\norm{z_\tau^u - (z_\tau^u)'}\\*
&+ C_2\sum_{u \in N_v^r} \rho_T^{t_0 - (t-1)}\rho_S^{\dist(v_0, u)} \norm{y_{t-1}^u - (y_{t-1}^u)'}.
\end{align*}
\end{definition}
%
%
%
Perturbation bounds were recently found to be a promising tool for the analysis of adaptive control and online optimization models\cite{lin2021perturbation}.  
The exponentially decaying local perturbation bound defined above is similar in spirit to two recent results, i.e.,  \citet{lin2021perturbation} derives a similar perturbation bound for line graphs and \citet{shin2021exponential} for general graphs with local perturbations. In fact, one may attempt to derive such a 
bound by applying these results directly; however, a major weakness of the direct approach is that it will yield $\rho_T = \rho_S$, i.e., it cannot distinguish between spatial and temporal dependencies, and the bound deteriorates as $\max\{\ell_T/\mu, \ell_S/\mu\}$ increases. For instance, even if the temporal interactions are weak (i.e., $\ell_T/\mu \approx 0$), $\rho_T = \rho_S$ can still be close to $1$ if $\ell_S/\mu$ is large, leading to a large slack in the perturbation bound for small prediction horizons $k$. 

We overcome this limitation by redefining the action variables. Specifically, to focus on the temporal decay effect, we regroup all local actions in $\{\tau\}\times N_v^r$ as a ``large'' decision variable for time $\tau$ (in Figure \ref{fig:information} we would group each horizontal blue plane in $N_v^r$ to create a new variable). After regrouping, we have $(k+1)$ ``large'' decision variables located on a line graph, where the strength of the interactions between consecutive variables is upper bounded by $\ell_T$. On the other hand, to focus on spatial decay, we regroup all local actions in $\{\tau\mid t-1\leq \tau < t+k\} \times \{v\}$ as a decision variable (in Figure \ref{fig:information} we would group each vertical orange line connecting from $t-1$ to $t+k-1$ to create a new variable). After regrouping, we have $\abs{\mathcal{V}}$ ``large'' decision variables located on $\mathcal{G}$, where the strength of the interactions between two neighbors is upper bounded by $\ell_S$. Averaging over the two perturbation bounds (since we have two valid bounds, their average is also a valid bound) provides the following exponentially decaying local perturbation bound (see \eqref{thm:networked-exp-decay:e3} in Appendix \ref{apx:networked-exp-decay} for details of the proof).

\begin{theorem}\label{thm:networked-exp-decay}
Under Assumption \ref{assump:costs-and-feasible-sets}, the exponentially decaying local perturbation bound (\Cref{thm:networked-exp-decay:meta}) holds with $C_1 = C_2 = \frac{2\sqrt{\Delta \ell_S \ell_T}}{\mu}$, and
\begin{align*}
    \rho_T ={}& \sqrt{1 - 2\left(\sqrt{1 + ({2\ell_T}/{\mu})} + 1\right)^{-1}}, \\
    \rho_S ={}& \sqrt{1 - 2\left(\sqrt{1 + ({\Delta \ell_S}/{\mu})} + 1\right)^{-1}}.
\end{align*}
\end{theorem}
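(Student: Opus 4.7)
The plan is to derive two separate perturbation bounds, one exhibiting purely temporal decay and the other purely spatial decay, and then combine them via the elementary inequality $\min(a,b) \le \sqrt{ab}$ to produce the product-form bound in the statement. The key observation, already flagged in the discussion preceding the theorem, is that any single off-the-shelf perturbation result yields only one joint decay rate, since it cannot distinguish whether coupling strength comes from $\ell_T$ or $\ell_S$. The way around this is to apply each off-the-shelf result to a different ``regrouping'' of the decision variables of \eqref{equ:LPC-opt-problem}, so that each instance sees only one kind of coupling.

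For the temporal bound I would bundle, at each time $\tau \in \{t, \ldots, t+k-1\}$, all $|N_v^r|$ local actions into a single super-variable $x_\tau^{N_v^r}$. Only the temporal interaction costs $c_\tau^u$ (each $\ell_T$-smooth and convex) couple distinct super-times; the node costs and spatial interaction costs are absorbed into a per-super-time cost that remains $\mu$-strongly convex because each $f_\tau^u$ is. Applying the line-graph perturbation bound of \cite{lin2021perturbation} and projecting onto the single coordinate $x_{t_0}^{v_0}$ gives a purely temporal decay $\rho_T^{2|t_0-\tau|}$; the exponent is doubled on purpose, so that the eventual geometric mean produces the desired $\rho_T^{|t_0-\tau|}$. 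For the spatial bound I instead regroup all $k+1$ times at each location $u$ into one super-variable $x_{[t-1,t+k-1]}^u$; now only the spatial interactions $s_\tau^e$ couple distinct super-nodes, with per-edge smoothness $\ell_S$ and maximum super-node degree $\Delta$ (inherited from $\mathcal{G}$). The graph-perturbation result in the style of \cite{shin2021exponential} then yields a purely spatial decay $\rho_S^{2\dist(v_0,u)}$.

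Having two valid upper bounds on $\|x_{t_0}^{v_0}-(x_{t_0}^{v_0})'\|$, one decaying only in time and the other only in space, applying $\min(a,b) \le \sqrt{ab}$ combines them into a product bound whose exponents are exactly $|t_0-\tau|$ in time and $\dist(v_0,u)$ in space; this is what the paper loosely calls ``averaging'' the two bounds. For simultaneous boundary perturbations I use the triangle inequality to sum contributions from each coordinate of $\partial N_{(t,v)}^{(k,r)}$ and of $\{t-1\} \times N_v^r$, and tracking the strong-convexity and smoothness constants through the two regroupings produces $C_1 = C_2 = 2\sqrt{\Delta \ell_S \ell_T}/\mu$ together with the explicit formulas for $\rho_T$ and $\rho_S$ claimed in the theorem. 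The $2\ell_T/\mu$ versus $\Delta \ell_S/\mu$ asymmetry inside the rates simply reflects the line-graph's degree $2$ versus the original graph's maximum degree $\Delta$.

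The main obstacle will be the careful bookkeeping of the boundary structure after each regrouping. The set $\partial N_{(t,v)}^{(k,r)}$ is a ``cylinder boundary'' containing both the final time slice and the spatial boundary at every intermediate time, so after temporal regrouping the super-variables at intermediate times still have their $\partial N_v^r$-coordinates fixed, and the reduced problem is not a pristine line-graph problem with only two endpoint perturbations. I would treat these fixed coordinates as per-super-time forcing inputs and invoke a version of the line-graph perturbation bound that accommodates such inputs, which is exactly the shape of the result available from \cite{lin2021perturbation}. A symmetric issue appears after spatial regrouping because of the initial-condition slice $\{t-1\} \times N_v^r$, which affects the super-variables at every node in the same way; the same input-perturbation handling resolves it and produces the $\rho_T^{t_0-(t-1)}$ factor attached to the $C_2$ term in the target bound.
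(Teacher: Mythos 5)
Your proposal matches the paper's proof essentially step for step: the two regroupings (time slices as super-variables on a line graph, and per-node time-stacks as super-variables on $\mathcal{G}$), the combination of the two resulting single-rate bounds via $\min(a,b)\le\sqrt{ab}$, the triangle inequality over individual boundary entries, and even the handling of the intermediate fixed spatial-boundary coordinates as forcing inputs (which the paper implements through the disturbance parameters in its general perturbation lemma, \Cref{prop:exp-decay-networked-SOCO}). The only cosmetic difference is that the paper proves its own general-graph perturbation bound rather than citing \citet{shin2021exponential} for the spatial regrouping, so your argument is correct and essentially the same as the paper's.
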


Note that, as $\ell_T/\mu$ (respectively $\ell_S/\mu$) tends to zero, $\rho_T$ (respectively $\rho_S$) in \Cref{thm:networked-exp-decay} also tends to zero 
with the scaling $\rho_T = \Theta(\sqrt{\ell_T/\mu})$ (resp. $\rho_S = \Theta(\sqrt{ \ell_S/\mu})$). 

Next, we provide a tighter bound (through a refined analysis) for the regime where $\mu$ is much larger than $\ell_T, \ell_S$. Specifically, we establish a bound with the scaling $\rho_T = \Theta(\ell_T/\mu)$ and $\rho_S = \Theta(\ell_S/\mu)$.  Again, it is not possible to obtain this result from previous perturbation bounds in the literature.  
\begin{theorem}\label{thm:networked-exp-decay-tight}
Recall $h(\gamma) \coloneqq \sup_{v \in \mathcal{V}} \abs{\partial N_v^{\gamma}}$. Given any $b_1, b_2 > 0$, 
define $a = \sum_{\gamma \ge 0}(\frac{1+b_1}{1+b_1+b_2})^{\gamma} h(\gamma)$, $\tilde a = \sum_{\gamma \ge 0}(\frac{1}{1+b_1})^{\gamma} h(\gamma)$ and $\gamma_S = \frac{\sqrt{1 + \Delta \ell_S/\mu} - 1}{\sqrt{1 + \Delta \ell_S/\mu} + 1}$.  Suppose Assumption \ref{assump:costs-and-feasible-sets} holds,  $a, \tilde a < \infty$ and $\mu \geq \max\{8\tilde a\ell_T, \Delta \ell_S (b_1 + b_2)/4\}$. Then
 the exponentially decaying local perturbation bound (\Cref{thm:networked-exp-decay:meta}) holds with $C_1 = C_2 = \max\{\frac{a^2}{2\tilde a(1 - 4\tilde al_T/\mu)}, \frac{2a^2\Delta \ell_S/\mu}{\gamma_S(1 + b_1 + b_2)(1 - 4\tilde al_T/\mu)}\}$ 
\begin{align*}
    \rho_T = \frac{4\tilde a\ell_T}{\mu}, \quad
    \rho_S = (1 + b_1 +b_2)\gamma_S.
\end{align*}
\end{theorem}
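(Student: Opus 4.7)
The plan is to bound the Jacobian of the optimal solution of \eqref{equ:LPC-opt-problem} with respect to the boundary parameters by studying the KKT system and then integrating along a straight-line homotopy between the two boundary configurations. Writing $z \coloneqq (\{y_{t-1}^u\}, \{z_\tau^u\})$ for the concatenated boundary and $x^\star(z)$ for the primal optimum restricted to the interior $N_{(t,v)}^{(k-1,r-1)}$, $\mu$-strong convexity of the node costs makes the reduced Hessian $H$ of the Lagrangian positive definite; standard sensitivity theory (as in \citet{lin2021perturbation}) then gives $\partial x^\star/\partial z = -H^{-1}B$, where $B$ is the matrix of mixed partials coupling boundary coordinates to interior gradients. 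Under Assumption~\ref{assump:costs-and-feasible-sets}, $B$ has nonzero blocks only along $\partial N_{(t,v)}^{(k,r)}$ or $\{t-1\}\times N_v^r$, each of operator norm at most $\ell_T$ (temporal boundary) or $\ell_S$ (spatial boundary). Integrating this Jacobian bound along the homotopy and accounting for a possibly varying active set using the same smoothing trick as Theorem~\ref{thm:networked-exp-decay}, it suffices to bound the $((t_0,v_0),(\tau,u))$-block of $H^{-1}$ by $\frac{C}{\mu}\rho_T^{|t_0-\tau|}\rho_S^{\dist(v_0,u)}$.

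The key step is a two-layer Neumann expansion that separates spatial and temporal couplings. Decompose $H = (D + S) + T$, where $D$ is block diagonal with blocks $\succeq \mu I$ (from node cost strong convexity), $S$ collects the within-time spatial-interaction blocks (coupling only neighbors in $\mathcal{G}$), and $T$ collects the temporal-interaction blocks (coupling only consecutive times at the same vertex). I would first bound $(D+S)^{-1}$ block-wise by applying a spatial-only perturbation argument in the spirit of \citet{shin2021exponential}: since $\Delta\ell_S/\mu$ is small under the hypothesis, the per-step geometric rate is $\gamma_S$, but the analysis admits a tunable slack parameter $(b_1+b_2)$ which yields a per-step rate $(1+b_1+b_2)\gamma_S$ in exchange for a smaller multiplicative constant $1/(1 + b_1)$ in front of each ``stay in place'' contribution. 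This is precisely where $\tilde a = \sum_\gamma (1+b_1)^{-\gamma} h(\gamma)$ enters: summing the resulting $(D+S)^{-1}$ block bounds over all spatial coordinates at a fixed time gives a total of at most $\tilde a/\mu$. Next, expand $H^{-1} = (D+S)^{-1}\sum_{n\ge 0}\bigl(-T(D+S)^{-1}\bigr)^n$. Each application of $T$ contributes a single unit temporal hop with operator norm $\le \ell_T$, and each intermediate $(D+S)^{-1}$ contributes $\tilde a/\mu$ when summed over spatial freedom. The ratio of consecutive terms in the temporal direction is therefore at most $4\tilde a\ell_T/\mu$, where the factor $4$ absorbs both directions of temporal travel together with the loss in going from $\gamma_S$ to the sharper $\tilde a$-weighted summation; this produces the geometric series $1/(1 - 4\tilde a\ell_T/\mu)$ visible in $C_1, C_2$, and fixes $\rho_T = 4\tilde a\ell_T/\mu$. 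The remaining factor $a$ comes from the two ``outermost'' spatial summations at the endpoints of the mixed path, using the slightly looser rate $(1+b_1+b_2)\gamma_S$ which is what $\rho_S$ in the theorem records.

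The principal obstacle is the bookkeeping that carries both decay rates through the mixed Neumann sum simultaneously, because a generic temporal-spatial path can take a detour in either direction, and a naive bound overcounts. The parameters $b_1, b_2$ are introduced exactly to calibrate this trade-off: enlarging $b_1$ tightens the convergence of $\tilde a$ (hence the temporal series) at the price of a slightly worse spatial rate, while $b_2$ further loosens $\rho_S$ in exchange for a smaller $a$ appearing in the constants. The two hypotheses $\mu \geq 8\tilde a\ell_T$ and $\mu \geq \Delta\ell_S(b_1+b_2)/4$ are exactly the conditions needed for the temporal and the spatial Neumann series, respectively, to be contractive with the claimed rates. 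The last step is to contract the $H^{-1}$ bound against $B$: the boundary-to-interior coupling through $B$ contributes an additional factor $\ell_T$ or $\Delta\ell_S$, which matches the explicit prefactor inside the $\max$ defining $C_1$ and $C_2$, and distinguishes the two cases (whether the perturbation enters through a temporal or a spatial boundary face).
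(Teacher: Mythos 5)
Your proposal is correct and follows essentially the same route as the paper: the first-order sensitivity equation $\partial x^\star/\partial z = -H^{-1}B$, the split of the Hessian into per-time-step graph-banded blocks (your $D+S$, the paper's $D$) plus the temporal tri-diagonal coupling (your $T$, the paper's $A$), the Neumann expansion $H^{-1}=(D+S)^{-1}\sum_{n\ge0}\bigl(-T(D+S)^{-1}\bigr)^{n}$, and a product-of-exponential-decay-matrices lemma whose slack parameters $b_1,b_2$ produce $\tilde a$, $a$, and the rates $\rho_T=4\tilde a\ell_T/\mu$, $\rho_S=(1+b_1+b_2)\gamma_S$. The only cosmetic difference is that the paper makes the path-counting explicit (the factor $2^{\ell}$ in $\rho_T$ comes from bounding $\binom{\ell}{(\ell-|i-j|)/2}$, the number of forward/backward temporal paths), which your "both directions of temporal travel" remark captures informally.
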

Note that $\rho_T, \rho_S < 1$ 
follow from the condition on $\mu$. Also observe that $\gamma_S = \Theta(\ell_S/\mu)$ as $\ell_S/\mu \to 0$.

The main difference between this result and  \Cref{thm:networked-exp-decay} is, instead of dividing and redefining the action variables, we explicitly write down the perturbations along spatial edges and along temporal edges in the original temporal-spatial graph. We observe that per-time-step spatial interactions are characterized by a banded matrix and that the inverse of the banded matrix exhibits exponential correlation decay, which implies the exponentially decaying local perturbation bounds holds if the perturbed boundary action and the impacted local action we consider are at the same the time step. 
However, for a multi-time-step problem, to characterize the impact at a local action at some time step due to perturbation at a boundary action at a different time step is a difficult problem. The main technical contribution of this proof is to establish that a product of exponentially decaying matrices still satisfies exponential decay under the conditions in  \Cref{thm:networked-exp-decay-tight}. In addition, we obtain a tight bound on the decay rate of the product matrix (see  \Cref{thm: l-power}), which may be of independent interest.

Our condition on $a, \tilde a < \infty$ and $\mu > \max\{8\tilde a\ell_T, \Delta \ell_S (b_1 + b_2)/4\}$ characterizes a tradeoff between the allowable neighborhood boundary sizes $h(\gamma)$, and how large $\mu$ needs to be compared to the interaction cost parameters $\ell_T, \ell_S$. At one extreme, if $h(\gamma) = \Delta^{\gamma}$, then by setting $b_1 = 2\Delta - 1$ and $b_2 = 4\Delta^2 - 2\Delta$, we obtain $a = \tilde a = 2$ but must make a strong requirement on $\mu$, namely, $\mu > \max\{16\ell_T, \Delta^3\ell_S(1 - \frac{1}{4\Delta^2})\}.$ At the other extreme, if $h(\gamma) \le O(poly(\gamma))$ (as is the case if $\mathcal{G}$ is a grid), then $a, \tilde a < \infty$ holds for any $b_1, b_2 > 0$, we can impose a weaker requirement on $\mu$: for example, taking $b_1 = b_2 = 1$ yields a requirement $\mu > \max\{8\tilde a \ell_T, \Delta \ell_S/2\}$ (where $\tilde a = \sum_{\gamma \ge 0} (\frac{1}{2})^{\gamma}h(\gamma)$); which grows only linearly in $\Delta$, and compares favorably with the $\mu > \Omega(\Delta^3)$ requirement which arose earlier.


Proofs of \Cref{thm:networked-exp-decay} and \Cref{thm:networked-exp-decay-tight} are in \Cref{apx:perturbation-bounds}. 

\subsection{From Perturbations to Competitive Bounds}\label{sec:main:CR}
We now present our main result, which bounds the competitive ratio of LPC using the exponentially decaying local perturbation bounds defined in the previous section.

Before presenting the result, we first provide some intuition as to why the perturbation bounds are useful for deriving the competitive ratio bound.  Specifically, to bound the competitive ratio requires bounding the gap between LPC's trajectory and the offline optimal trajectory.  This gap comes from the following two sources: (i) the per-time-step error made by LPC due to its limited prediction horizon and communication radius; and (ii) the cumulative impact of all per-time-step errors made in the past. Intuitively, the local perturbation bounds we derive in Section \ref{sec:main:perturb} allow us to bound the per-step error made jointly by all agents in LPC, and then we use the perturbation bounds from \citet{lin2021perturbation} help us to bound the second type of cumulative errors. 

We present our main result in the following theorem and defer a proof outline to Appendix \ref{sec:outline-competitive-ratio}. A formal proof can be found in Appendix \ref{apx:coro:CR-LPC}.

\begin{theorem}\label{coro:CR-LPC}
Suppose Assumption \ref{assump:costs-and-feasible-sets} and the exponentially decaying local perturbation bound in \Cref{thm:networked-exp-decay:meta} holds. Define 
$\rho_G \coloneqq 1 - 2 \cdot \left(\sqrt{1 + ({2\ell_T}/{\mu})} + 1\right)^{-1}, $
and define $C_3(r) \coloneqq \sum_{\gamma = 0}^{r} h(\gamma) \cdot \rho_S^\gamma$. If parameters $r$ and $k$ of LPC are large enough such that
$O\left(h(r)^2\cdot \rho_S^{2r} + C_3(r)^2 \cdot \rho_T^{2k} \cdot \rho_G^{2k}\right) \leq \frac{1}{2},$
then the competitive ratio of LPC is upper bounded by
\[1 + O\left(h(r)^2 \cdot \rho_S^r\right) + O\left(C_3(r)^2 \cdot \rho_T^{k}\right).\]
Here the $O(\cdot)$ notation hides factors that depend polynomially on $\ell_f/\mu, \ell_T/\mu,$ and $(\Delta \ell_S)/\mu$; see Appendix \ref{apx:coro:CR-LPC}.
\end{theorem}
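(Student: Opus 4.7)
The plan is to bound $cost(\text{LPC}) - cost(\text{OPT})$ by a perturbation-based tracking argument that first controls the per-step error LPC commits relative to an ``ideal'' local problem (in which the boundary is set to the offline optimal values rather than to the node-cost minimizers), and then controls how this per-step error propagates in time. Throughout, I denote by $x_t^{v,*}$ the offline optimal action at $(t,v)$, and by $x_t^v$ the LPC action.

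\textbf{Step 1: Per-step ideal comparison.} For each agent $v$ at time $t$, LPC returns the $(t,v)$-entry of $\psi_{(t,v)}^{(k,r)}$ evaluated at the boundary values $y_{t-1}^u = x_{t-1}^u$ (LPC's own previous actions, for $u \in N_v^r$) and $z_\tau^u = \theta_\tau^u$ (node-cost minimizers, for $(\tau,u)\in\partial N_{(t,v)}^{(k,r)}$). Consider evaluating the same $\psi_{(t,v)}^{(k,r)}$ at the offline optimal values $y_{t-1}^u = x_{t-1}^{u,*}$ and $z_\tau^u = x_\tau^{u,*}$; by the principle of optimality (optimal substructure of the global offline problem), this ideal evaluation recovers exactly $x_t^{v,*}$. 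Applying the exponentially decaying local perturbation bound of \Cref{thm:networked-exp-decay:meta} at $(t_0,v_0)=(t,v)$ then yields
\begin{align*}
\norm{x_t^v - x_t^{v,*}}
\,\le\, &C_1\sum_{(\tau,u)\in\partial N_{(t,v)}^{(k,r)}} \rho_T^{\,\tau-t}\,\rho_S^{\dist(v,u)}\,\norm{\theta_\tau^u - x_\tau^{u,*}} \\
&+ C_2\sum_{u\in N_v^r} \rho_T\,\rho_S^{\dist(v,u)}\,\norm{x_{t-1}^u - x_{t-1}^{u,*}}.
\end{align*}
The first sum has two pieces: the temporal boundary at $\tau=t+k-1$ contributes a factor $\rho_T^{k-1}$ weighted by a spatial sum that is $O(C_3(r))$, and the spatial boundary at $\dist(v,u)=r$ contributes a factor $\rho_S^{r}$ weighted by $h(r)$ times a geometric temporal sum. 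The second term is a recursive dependence on the previous tracking error, with spatial weight $C_3(r)$ and temporal weight $\rho_T$.

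\textbf{Step 2: Squared-error recursion over agents.} Squaring the inequality above, applying Cauchy--Schwarz, and summing over $v\in\mathcal V$, I obtain an inequality of the form
\begin{align*}
E_t \,\le\, \alpha_k\,E_{t-1} + B_t,
\end{align*}
where $E_t:=\sum_v\norm{x_t^v - x_t^{v,*}}^2$, the contraction factor $\alpha_k$ aggregates the spatial $C_3(r)$ sums against themselves and is controlled by $\rho_T^2$ (in fact by $\rho_G^{2k}$ after further unrolling, because the recursive perturbation can itself be expanded over the temporal horizon, matching the scalar decay $\rho_G$ of \Cref{coro:CR-LPC}), and $B_t$ is the residual boundary-mismatch term which, after bounding each $\norm{\theta_\tau^u - x_\tau^{u,*}}$ by the curvature of the node costs (so $\mu\norm{\theta_\tau^u-x_\tau^{u,*}}^2 \le 2f_\tau^u(x_\tau^{u,*})$), is itself of order $\big(h(r)^2\rho_S^{2r} + C_3(r)^2\rho_T^{2k}\big)\cdot f_\tau(x_\tau^*)$. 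The smallness hypothesis $O(h(r)^2\rho_S^{2r} + C_3(r)^2\rho_T^{2k}\rho_G^{2k}) \le 1/2$ guarantees $\alpha_k \le 1/2$, so unrolling the recursion gives
\begin{align*}
\sum_{t=1}^H E_t \,\le\, O\big(h(r)^2\rho_S^{2r} + C_3(r)^2\rho_T^{2k}\big)\cdot\sum_{t=1}^H f_t(x_t^*).
\end{align*}

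\textbf{Step 3: From trajectory error to cost gap.} By $\ell_f$-smoothness of $f_t^v$, $\ell_T$-smoothness of $c_t^v$, and $\ell_S$-smoothness of $s_t^e$, one has $cost(\text{LPC}) - cost(\text{OPT}) \le O(\ell_f + \Delta\ell_S)\sum_t E_t + O(\ell_T)\sum_t (E_t + E_{t-1})$, plus first-order terms that vanish at the optimum. Substituting the Step~2 bound and dividing by $cost(\text{OPT}) \ge \sum_t f_t(x_t^*)$, which is a lower bound because hitting costs are nonnegative and $f_t(x_t^*)$ is part of the hitting cost, yields $cost(\text{LPC})/cost(\text{OPT}) \le 1 + O(h(r)^2\rho_S^r) + O(C_3(r)^2\rho_T^k)$, where the square root arises because the $O(\cdot)$ in front absorbs the polynomial dependence on $\ell_f/\mu,\ell_T/\mu,(\Delta\ell_S)/\mu$ and because the dominating-term inequality converts $\rho^{2k}$ to $\rho^k$ once the stability constraint from the hypothesis is used.

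\textbf{Main obstacle.} The hardest step is Step~2: the coupling between spatial and temporal propagation means the one-step recursion constant $\alpha_k$ naturally comes out as $C_3(r)^2\rho_T^2$, which is not small enough by itself to bound a $k$-step product. The correct route is to expand the recursion over $k$ consecutive time steps and invoke the \emph{temporal-only} perturbation bound (as in \citet{lin2021perturbation}) to replace $\rho_T^{2k}$ by $\rho_G^{2k}$ at the critical step, thereby matching the form of the stability hypothesis in the theorem. Getting this matching exactly right -- so that the final competitive ratio has $\rho_T^k$ (not $\rho_T^{2k}$) and the multiplicative prefactors $h(r)^2$ and $C_3(r)^2$ appear in the proper places -- requires careful bookkeeping of the two separate geometric sums that arise from the spatial and temporal boundary contributions, and is the main technical obstacle.
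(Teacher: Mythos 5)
The main gap is in Step~2. Your one-step recursion $E_t \le \alpha_k E_{t-1} + B_t$ has contraction factor $\alpha_k \asymp C_2^2\, C_3(r)^2\rho_T^2$: the $C_2$ term of the perturbation bound evaluated at $t_0 = t$ carries weight $\rho_T^{t-(t-1)} = \rho_T$, and the spatial sums produce $C_3(r)^2$ after Cauchy--Schwarz. The theorem's hypothesis only controls $C_3(r)^2\rho_T^{2k}\rho_G^{2k}$ --- note the $k$-th powers --- and says nothing about $C_3(r)^2\rho_T^{2}$, which can exceed $1$ (e.g.\ when $\rho_T$ is close to $1$ and $C_2$, $C_3(r)$ are large). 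So your recursion need not contract, and unrolling it over $k$ steps only makes matters worse when $\alpha_k>1$. Your proposed fix --- ``invoke the temporal-only perturbation bound to replace $\rho_T^{2k}$ by $\rho_G^{2k}$'' --- cannot be executed inside this recursion: $\rho_G$ is the decay rate of a \emph{different} optimization problem (the global, full-communication problem $\tilde{\psi}$ of \Cref{thm:global-exp-decay}), and no step of your decomposition ever produces that problem, so there is nothing to substitute.

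The paper's proof avoids this by choosing a different comparator in your Step~1: LPC's action is compared not with $x_t^{v,*}$ but with $\tilde{\psi}_t(x_{t-1})_t$, the clairvoyant optimal continuation from LPC's \emph{own} previous state $x_{t-1}$ (this is the per-step error $e_t$ of \Cref{def:per-step-err-magnitude}). Both evaluations of $\psi_{(t,v)}^{(k,r)}$ then use the identical arguments $y_{t-1}^u = x_{t-1}^u$, so the $C_2\,\rho_T\,\norm{x_{t-1}^u - x_{t-1}^{u,*}}$ term vanishes entirely; the dependence on $\norm{x_{t-1}-x_{t-1}^*}$ re-enters only through the global bound $\norm{x_{\tau\mid t-1}^* - x_\tau^*}\le C_0\rho_G^{\tau-t+1}\norm{x_{t-1}-x_{t-1}^*}$ applied to the boundary values, which is what supplies the extra $\rho_G^{k}$ factor at $\tau = t+k-1$ that the smallness hypothesis is designed to control (\Cref{thm:per-step-err}). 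A separate accumulation result (\Cref{thm:alg-perf-bound-err-inject}) then converts $\sum_t e_t^2$ into $\sum_t \norm{x_t - x_t^*}^2$ at the cost of a factor $(1-\rho_G)^{-2}$. Your Step~3 is right in outline, but the passage from $\rho^{2k}$ to $\rho^{k}$ comes specifically from choosing $\eta = \rho_S^r + \rho_T^{k-1}$ in the inequality $f_t(x_t)\le(1+\eta)f_t(x_t^*)+\tfrac{\ell}{2}(1+\eta^{-1})\norm{x_t-x_t^*}^2$ (\Cref{lemma:smooth-difference}), not from a ``dominating-term inequality.''
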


Recall that $h(r)$ denotes the size of the largest $r$-hop boundary in $\mathcal{G}$. 
The bound in \Cref{coro:CR-LPC} implies that if $h(r)$ can be upper bounded by $poly(r)\cdot \rho_S^{-\frac{(1 - \iota) r}{2}}$ for some constant $\iota > 0$, the competitive ratio of LPC can be upper bounded by $1 + O(\rho_S^{\iota r}) + O(\rho_T^k)$, because $C_3(r)$ can be upper bounded by some constant that depends on $\iota$ in this case. Therefore, the competitive ratio improves exponentially with respect to the prediction horizon $k$ and communication radius $r$. 

Note that  the assumption
$h(r) \leq poly(r)\cdot \rho_S^{-\frac{(1 - \iota) r}{2}}$
is not particularly restrictive: For commonly seen graphs like an $m$-dimensional grid, $h(r)$ is polynomial in $r$, so $\iota = 1$ works. More generally, for graphs with bounded degree $\Delta<\infty$, there exists $\delta = \delta(\Delta)>0$ such that, for any graph with node degrees bounded above by $\Delta$ and any ${\ell_S}/{\mu} \leq \delta$, we have $\rho_S$ (from either Theorem~\ref{thm:networked-exp-decay} or Theorem~\ref{thm:networked-exp-decay-tight}) will be small enough that, e.g., $h(r) \leq \Delta^r = O( \rho_S^{-\frac{r}{4}})$; i.e., $\iota = 1/2$ works. 
Thus we can eliminate the dependence on $h(r)$ and $C_3(r)$ in the competitive ratio by making additional assumptions on $\ell_S/\mu$. This result is stated in Corollary \ref{coro:pure-exp-decay} whose proof is deferred to Appendix \ref{apx:coro:pure-exp-decay}. Corollary \ref{coro:pure-exp-decay} is a corollary of \Cref{coro:CR-LPC} and \Cref{thm:networked-exp-decay-tight}. We use the bound in \Cref{thm:networked-exp-decay-tight} and not the bound in \Cref{thm:networked-exp-decay} because \Cref{thm:networked-exp-decay-tight} is tighter when $\ell_S/\mu$ is small.

\begin{corollary}\label{coro:pure-exp-decay}
Suppose \Cref{assump:costs-and-feasible-sets} and inequalities $\ell_S/\mu \leq {\Delta^{-7}}$, and $\ell_T/\mu \leq 1/16$ hold. If $r$ and $k$ satisfy that
$O\left(\rho_S^{r} + \rho_T^{2k} \cdot \rho_G^{2k}\right) \leq \frac{1}{2},$
then the competitive ratio of LPC is upper bounded by
$1 + O\left(\rho_S^{r/2}\right) + O\left(\rho_T^{k}\right),$
where $\rho_S$ and $\rho_T$ are given by \Cref{thm:networked-exp-decay-tight} with parameters $b_1 = 2\Delta -1$ and $b_2 = 4\Delta^2 - 2\Delta$.
The $O(\cdot)$ notation hides factors that depend polynomially on $\ell_f/\mu, \ell_T/\mu,$ and $(\Delta \ell_S)/\mu,$ see Appendix \ref{apx:coro:pure-exp-decay} for the full constants.
\end{corollary}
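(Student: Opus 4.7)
The plan is to derive Corollary~\ref{coro:pure-exp-decay} by combining Theorem~\ref{thm:networked-exp-decay-tight} (to instantiate the perturbation bound under the small-interaction regime) with Theorem~\ref{coro:CR-LPC} (to convert that perturbation bound into a competitive ratio), and then to use the chosen smallness of $\ell_S/\mu$ to absorb the graph-geometry factors $h(r)$ and $C_3(r)$ into the exponential decay terms.

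First, I will verify that the hypotheses of Theorem~\ref{thm:networked-exp-decay-tight} hold with the prescribed choice $b_1 = 2\Delta-1$ and $b_2 = 4\Delta^2 - 2\Delta$, so that $1+b_1 = 2\Delta$ and $1+b_1+b_2 = 4\Delta^2$. Using the universal bound $h(\gamma)\le \Delta^\gamma$ for bounded-degree graphs, I obtain
\[
\tilde a = \sum_{\gamma\ge0}\bigl(\tfrac{1}{2\Delta}\bigr)^\gamma h(\gamma) \le \sum_{\gamma\ge0} 2^{-\gamma} = 2,\qquad a = \sum_{\gamma\ge0}\bigl(\tfrac{1}{2\Delta}\bigr)^\gamma h(\gamma) \le 2,
\]
so both are finite. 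The remaining prerequisites $8\tilde a\ell_T\le \mu$ and $\Delta\ell_S(b_1+b_2)/4\le \mu$ reduce respectively to $\ell_T/\mu\le 1/16$ and $\ell_S/\mu \le 4/(\Delta(4\Delta^2-1)) \ge \Delta^{-3}$, both implied by the corollary's hypotheses. Thus Theorem~\ref{thm:networked-exp-decay-tight} applies and the exponentially decaying local perturbation bound holds with the stated $\rho_T$ and $\rho_S = 4\Delta^2\gamma_S$.

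Second, I will show that $\rho_S$ is small enough that $\Delta^2\rho_S$ is bounded away from $1$. Since $\gamma_S=\Theta(\Delta\ell_S/\mu)$ as $\ell_S/\mu\to 0$ (indeed $\gamma_S\le \Delta\ell_S/(4\mu)$ when $\Delta\ell_S/\mu$ is small), the assumption $\ell_S/\mu\le \Delta^{-7}$ yields $\rho_S = O(\Delta^3\ell_S/\mu) = O(\Delta^{-4})$. Consequently $h(r)^2\rho_S^r \le \Delta^{2r}\rho_S^r = (\Delta^2\rho_S)^r \le \rho_S^{r/2}$ (using $\Delta^2\rho_S \le \rho_S^{1/2}$, equivalent to $\rho_S\le \Delta^{-4}$), and
\[
C_3(r) = \sum_{\gamma=0}^r h(\gamma)\rho_S^\gamma \le \sum_{\gamma\ge 0}(\Delta\rho_S)^\gamma = O(1),
\]
since $\Delta\rho_S = O(\Delta^{-3}) < 1$.

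Third, I will feed these estimates into Theorem~\ref{coro:CR-LPC}. Its hypothesis $O(h(r)^2\rho_S^{2r} + C_3(r)^2\rho_T^{2k}\rho_G^{2k})\le 1/2$ becomes $O(\rho_S^r + \rho_T^{2k}\rho_G^{2k})\le 1/2$, matching the corollary's hypothesis. Its conclusion $1+O(h(r)^2\rho_S^r)+O(C_3(r)^2\rho_T^k)$ likewise reduces to $1+O(\rho_S^{r/2})+O(\rho_T^k)$, which is exactly the stated bound. The $O(\cdot)$ constants absorb the polynomial dependence on $\ell_f/\mu,\ell_T/\mu,(\Delta\ell_S)/\mu$ inherited from Theorem~\ref{coro:CR-LPC}, and I will tabulate them in the appendix.

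The only subtle step is the second one: one must ensure that the chosen values of $b_1,b_2$, which are inflated by $\Delta$ to make $a,\tilde a$ graph-independent, do not inflate $\rho_S$ beyond $\Delta^{-4}$. The assumption $\ell_S/\mu\le \Delta^{-7}$ is calibrated precisely so that the factor of $\Delta^3$ incurred in $\rho_S\sim \Delta^3\ell_S/\mu$ still leaves $\Delta^2\rho_S\le \rho_S^{1/2}$, which is the key inequality enabling both the absorption of $h(r)^2$ and the boundedness of $C_3(r)$. I expect this bookkeeping, rather than any new analytical idea, to be the main thing to get right.
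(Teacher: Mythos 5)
Your proposal is correct and follows essentially the same route as the paper's proof: both hinge on the key inequality $\Delta^2\rho_S\le\sqrt{\rho_S}$ (equivalently $\rho_S\le\Delta^{-4}$), derived from $\ell_S/\mu\le\Delta^{-7}$ via $\gamma_S\le\Delta\ell_S/(4\mu)$, and then use it to absorb $h(r)^2\le\Delta^{2r}$ into $\rho_S^{r/2}$ and to bound $C_3(r)$ by a constant geometric sum before invoking \Cref{coro:CR-LPC}. The only cosmetic difference is that you explicitly re-verify the hypotheses of \Cref{thm:networked-exp-decay-tight} (which the paper relegates to the Section~\ref{sec:main:perturb} discussion) and bound $C_3(r)$ via $\Delta\rho_S<1$ rather than $\Delta\rho_S\le\sqrt{\rho_S}/\Delta$; both are valid.
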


\subsection{A Lower Bound}\label{sec:main:lower}
We show that the competitive ratio in \Cref{coro:CR-LPC} is order-optimal by deriving a lower bound on the competitive ratio of any decentralized online algorithm with  prediction horizon $k$ and communication radius $r$. The specific constants and a proof of \Cref{thm:bottleneck} can be found in Appendix \ref{apx:thm:bottleneck}. 
\begin{theorem}\label{thm:bottleneck}
When $\Delta \geq 3$, the competitive ratio of any decentralized online algorithm is lower bounded by $1 + \Omega(\lambda_T^k) + \Omega(\lambda_S^r)$. Here, the decay factor $\lambda_T$ is given by $\lambda_T = \left(1 - 2\left(\sqrt{1 + (4\ell_T/\mu)} + 1\right)^{-1}\right)^2$. The decay factor $\lambda_S$ is given by $\lambda_S = \frac{(\Delta \ell_S/\mu)}{3 + 3 (\Delta \ell_S/\mu)}$ if $\Delta \ell_S/\mu < 48$; $\lambda_S = \max\left(\frac{(\Delta \ell_S/\mu)}{3 + 3 (\Delta \ell_S/\mu)}, \left(1 - 4 \sqrt{3} \cdot (\Delta \ell_S/\mu)^{-\frac{1}{2}}\right)^2\right)$ otherwise. The $\Omega(\cdot)$ notation hides factors that depend polynomially on $1/\mu, \ell_T,$ and $\ell_S$.
\end{theorem}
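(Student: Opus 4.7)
The plan is to prove the two additive terms separately by constructing adversarial instances that isolate the roles of limited lookahead $k$ and limited communication radius $r$. Since the competitive ratio is a supremum over problem instances, any single instance yielding $cost(ALG)/cost(OPT) \geq 1 + \Omega(\lambda_T^k)$ proves the temporal part, and analogously for the spatial part. The sum of the two lower bounds, up to constants, then follows because $1 + a + b \leq 2 \max(1 + a, 1 + b)$. In both cases the instance will be a pure quadratic, so the offline optimum can be computed explicitly by a linear system.

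\textbf{Temporal lower bound $1 + \Omega(\lambda_T^k)$.} I would take a graph of isolated agents (so $s_t^e$ vanishes vacuously) and focus on a single agent $v$. Set $f_t^v(x) = \frac{\mu}{2}(x - a_t)^2$ for a carefully chosen target sequence $\{a_t\}$, and $c_t^v(x, y) = \frac{\ell_T}{2}(x - y)^2$. The offline optimal trajectory solves the tridiagonal KKT recurrence $\ell_T x^*_{t-1} - (\mu + 2\ell_T) x^*_t + \ell_T x^*_{t+1} = -\mu a_t$, whose stable characteristic root $\zeta \in (0,1)$ is exactly such that $\zeta^2 = \lambda_T$, i.e.\ the influence of $a_\tau$ on $x^*_t$ decays like $\zeta^{|\tau-t|}$. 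An online algorithm with horizon $k$ sees nothing of $a_{t+k}, a_{t+k+1}, \ldots$; applying Yao's principle with $\{a_t\}$ drawn i.i.d.\ $\pm 1$ (say, nonzero only every few steps to separate the signals), the algorithm's action $\hat x_t$ cannot depend on $a_{t+k}$, so $\mathbb{E}[(\hat x_t - x^*_t)^2] \geq c \,\zeta^{2k}$ for a universal constant $c$. Strong convexity converts this into an additive per-step cost gap $\Omega(\mu \lambda_T^k)$, while the optimum is $\Theta(\mu)$ per step, giving the temporal ratio bound.

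\textbf{Spatial lower bound $1 + \Omega(\lambda_S^r)$.} I would use a time-invariant instance with zero switching cost on a graph of maximum degree $\Delta \geq 3$ (the condition $\Delta \geq 3$ lets me fix a shortest path of length $r+1$ while still keeping the problem non-degenerate and matching the right decay constants). With $f^v(x) = \frac{\mu}{2}(x - a_v)^2$ and $s^{(v,u)}(x, y) = \frac{\ell_S}{2}(x-y)^2$, the offline optimum is $x^* = (\mu I + \ell_S L)^{-1}(\mu a)$, where $L$ is the graph Laplacian. I would pick a central agent $v_0$ and two instances $a^{(+)}, a^{(-)}$ that agree on $N_{v_0}^r$ but differ with opposite signs at a distinguished node $u_0$ at distance $r+1$; explicit computation on the restriction of $(\mu I+\ell_S L)^{-1}$ to the path $v_0 \leftrightarrow u_0$ shows $|x^{*,+}_{v_0} - x^{*,-}_{v_0}| = \Omega(\lambda_S^{r/2})$ in the low-interaction regime and $\Omega((1 - c/\sqrt{\Delta\ell_S/\mu})^r)$ in the high-interaction regime --- these match the two branches of the stated $\lambda_S$ (the switchover at $\Delta\ell_S/\mu = 48$ is exactly where the two expressions cross). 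Any decentralized algorithm with communication radius $r$ outputs the same action at $v_0$ in both instances; by strong convexity it pays an extra $\Omega(\mu \lambda_S^r)$ in one of them, while the offline optimal cost is $\Theta(\mu)$ by choosing a suitably normalized perturbation (in fact, one may repeat the perturbation along many disjoint paths of length $r+1$ to aggregate the error over the graph while keeping the relative gap at $\Omega(\lambda_S^r)$).

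\textbf{Main obstacle.} The spatial piece is the harder of the two. Unlike the temporal case, which is a mild adaptation of classical switching-cost lower bounds, the spatial case requires a \emph{lower} bound on decay of entries of $(\mu I + \ell_S L)^{-1}$ along a path; our upper bounds (Theorems \ref{thm:networked-exp-decay} and \ref{thm:networked-exp-decay-tight}) are not enough. I would handle this by choosing the perturbation to live on an induced path, reducing to a second-order linear recurrence whose closed-form solution furnishes the matching lower bound on the decay rate. The second subtlety is calibrating the magnitude of the adversarial signal so that the relative (not absolute) gap scales as $\lambda_S^r$ rather than $\lambda_S^{2r}$; this requires that the offline optimum be $\Theta(1)$ rather than $\Theta(\lambda_S^r)$, achieved by a small enough perturbation combined with either an indistinguishability argument across two instances or an aggregation across many independent perturbations distributed throughout the graph.
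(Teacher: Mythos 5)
Your overall architecture matches the paper's: the bound is proved by treating the temporal and spatial terms separately, the temporal term via a single-agent switching-cost instance (the paper invokes Theorem 2 of \citet{li2020online}, which is exactly the tridiagonal-recurrence/unseen-future-target construction you describe, and your identification of the stable root $\zeta$ with $\sqrt{\lambda_T}$ is correct), and the spatial term via an indistinguishability argument whose technical core is a \emph{lower} bound on the off-diagonal decay of $(\mu I + \ell_S L)^{-1}$. You also correctly identify the two real obstacles: lower-bounding the resolvent entries, and normalizing so that the relative gap is $\lambda_S^{r}$ rather than $\lambda_S^{2r}$. Your aggregation-over-many-perturbations fix for the second obstacle is essentially what the paper does, in the cleaner form of i.i.d.\ mean-zero node parameters $w_i$ combined with the observation that the Bayes estimator $\bE{x_i^*(w) \given w_{N_i^r}}$ minimizes mean-squared error, which handles every node simultaneously without having to lay out perturbation sites and measurement sites at mutual distance $>r$.

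The genuine gap is in your resolution of the first obstacle. Computing the decay of $(\mu I + \ell_S L)^{-1}$ ``on the restriction to an induced path'' via a second-order recurrence cannot produce the stated decay factor $\lambda_S = \Theta\bigl(\tfrac{\Delta\ell_S/\mu}{1+\Delta\ell_S/\mu}\bigr)$: a recurrence along a path of single vertices has coupling $\ell_S$ against diagonal mass $\mu + \deg(\cdot)\ell_S$, so its characteristic root is governed by $\ell_S/\mu$ alone (and off-path edges only \emph{increase} the diagonal and hence \emph{shrink} the resolvent entries), yielding the strictly weaker bound $1+\Omega(\tilde\lambda_S^{\,r})$ with $\tilde\lambda_S \approx \tfrac{\ell_S/\mu}{3+3\ell_S/\mu}$, no $\Delta$. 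The $\Delta$ inside $\lambda_S$ is the whole content of the claim that denser graphs are harder, and to witness it you need a graph in which the number of length-$t$ walks between the two distinguished vertices grows like $\Delta^{t}$, so that the walk multiplicity compensates the $(2d\ell+1)^{-t}$ attenuation in the Neumann series. The paper's construction is a ring of $N$ blocks of $d=\lfloor\Delta/2\rfloor$ vertices with complete bipartite connections between adjacent blocks; then $\bigl((I+\ell L)^{-1}\bigr)_{ij} = \tfrac{1}{2d\ell+1}\sum_{t}\bigl(\tfrac{\ell}{2d\ell+1}\bigr)^{t}(M^{t})_{ij}$ with $(M^{\kappa+2m})_{ij}\ge \binom{\kappa+2m}{m}d^{\kappa+2m-2}$, which gives per-hop decay $\tfrac{d\ell}{2d\ell+1}$ from the $m=0$ term (the first branch of $\lambda_S$) and, after a Stirling estimate on the $m>0$ terms, the $(1-O((d\ell)^{-1/2}))$ rate of the second branch. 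Your recurrence idea can be salvaged only if you first pass to this block structure (exploiting within-block symmetry to reduce to a one-dimensional recurrence with effective coupling $d\ell_S$); as written, the single-path computation does not prove the theorem as stated.
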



While \Cref{thm:bottleneck} highlights that \Cref{coro:CR-LPC} is order-optimal, the decay factors $\lambda_T, \lambda_S$ in the lower bound differ from their counterparts $\rho_T, \rho_S$ in the upper bound for LPC. To understand the magnitude of the difference, we compare the bounds on graphs with bounded degree $\Delta$. 
The decay factors are a function of the interaction strengths, which are measured by ${\ell_S}/{\mu}$ and ${\ell_T}/{\mu}$. Our lower bound on the temporal decay factor $\lambda_T$ and upper bound $\rho_T$ only differ by a constant factor in the log-scale, and the same holds for the lower/upper bound in terms of the spatial decay factor.

To formalize this comparison, we derive a resource augmentation bound that bounds the additional ``resources'' that LPC needs to outperform the optimal decentralized online algorithm.\footnote{See, e.g., \citet{roughgarden2020resource}, for an introduction to this flavor of bounds for expressing the near-optimality of an algorithm.} Here the prediction horizon $k$ and the communication radius $r$ can be viewed as the ``resources'' available to a decentralized online algorithm in our setting. We ask \textit{how large do $k$ and $r$ given to LPC need to be, to ensure that it beats the optimal decentralized online algorithm given a communication radius $r^*$ and prediction horizon $k^*$?} 

We formally state our result in the following corollary 
and provide a proof in Appendix \ref{apx:coro:resource-augmentation}.
\begin{corollary}\label{coro:resource-augmentation}
Under Assumption \ref{assump:costs-and-feasible-sets}, suppose the optimal decentralized online algorithm achieves a competitive ratio of $c(k^*, r^*)$ with prediction horizon $k^*$ and communication radius $r^*$. Additionally assume that $h(\gamma) = \tilde{O}\left(\rho_S^{-\gamma/4}\right)$ 
and $\Delta \geq 3$, where the $\tilde{O}$ notation hides a factor that depends polynomially on $\gamma$. As $k^*, r^* \to \infty$, LPC achieves a competitive ratio at least as good as that of the optimal decentralized online algorithm when LPC uses a  
    prediction horizon of $k = (4 + o(1)) k^*$
    and a communication radius of $r = (32 + o(1)) r^*$. 
\end{corollary}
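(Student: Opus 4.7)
The plan is to directly compare the LPC upper bound from \Cref{coro:CR-LPC} with the lower bound from \Cref{thm:bottleneck}, then solve for the smallest $k$ and $r$ that make the upper bound no larger than the lower bound. The assumption $h(\gamma)=\tilde O(\rho_S^{-\gamma/4})$ gives both $h(r)^2\rho_S^r = \tilde O(\rho_S^{r/2})$ and $C_3(r)=\sum_{\gamma=0}^r h(\gamma)\rho_S^\gamma = \tilde O(1)$ (a convergent geometric tail), so \Cref{coro:CR-LPC} simplifies to $CR(\mathrm{LPC}) \leq 1 + \tilde O(\rho_S^{r/2}) + \tilde O(\rho_T^k)$, with $\rho_T,\rho_S$ taken from \Cref{thm:networked-exp-decay}. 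Meanwhile \Cref{thm:bottleneck} gives $c(k^*,r^*) \geq 1 + \Omega(\lambda_T^{k^*}) + \Omega(\lambda_S^{r^*})$. It therefore suffices to guarantee $\rho_T^k \leq C_1 \lambda_T^{k^*}$ and $\rho_S^{r/2} \leq C_2 \lambda_S^{r^*}$ for constants $C_1,C_2$ that depend on $\ell_f/\mu,\ell_T/\mu,\Delta\ell_S/\mu$ but not on $k^*,r^*$; the additive $O(1)$ slack they contribute will be absorbed into the $o(1)$ factors as $k^*,r^*\to\infty$.

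For the temporal ratio I would show $\rho_T^4 \leq \lambda_T$ uniformly in $\alpha := \ell_T/\mu$. Writing $y_2 := \sqrt{1+2\alpha}$ and $y_4 := \sqrt{1+4\alpha}$, \Cref{thm:networked-exp-decay} and \Cref{thm:bottleneck} give $\rho_T^2 = (y_2-1)/(y_2+1)$ and $\lambda_T = \bigl((y_4-1)/(y_4+1)\bigr)^2$. Since $y\mapsto(y-1)/(y+1)$ is strictly increasing and $y_4 \geq y_2$, the inequality $\rho_T^4 \leq \lambda_T$ follows immediately, so $\log(1/\lambda_T)/\log(1/\rho_T) \leq 4$. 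Combining with the additive $O(1/\log(1/\rho_T)) = O(1)$ slack and dividing by $k^*$ yields $k = (4+o(1))k^*$.

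The spatial ratio is where I expect the main obstacle. The target is an inequality of the form $\rho_S^{16} \leq \lambda_S$ uniformly in $\beta := \Delta\ell_S/\mu$, which delivers $r = (32+o(1))r^*$ by the same absorption argument. The difficulty is the piecewise definition of $\lambda_S$ in \Cref{thm:bottleneck} (one formula for $\beta<48$, and the $\max$ of two formulas for $\beta\geq 48$). Reparametrizing $w := \sqrt{1+\beta}$ so that $\rho_S^2 = (w-1)/(w+1)$ and the first sub-formula becomes $(w^2-1)/(3w^2)$, the small-$\beta$ branch reduces to a polynomial-type inequality in $w$ that can be tracked via a single-variable derivative. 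The large-$\beta$ branch uses the leading-order expansions $1-\rho_S^{16} \approx 16/\sqrt{\beta}$ and $1-\lambda_S \approx 8\sqrt{3}/\sqrt{\beta}$, where the strict inequality $16 > 8\sqrt{3}$ closes the gap with room to spare. The intermediate regime (around the crossover where both sub-formulas of the $\max$ are comparable) is the delicate piece: I would either pin down the worst-case $\beta$ by a one-dimensional analysis of $\log(1/\lambda_S)/\log(1/\rho_S)$ restricted to each smooth branch, or split into subcases according to which sub-formula of $\lambda_S$ dominates and verify the inequality on each. Once this uniform spatial bound is secured, the same argument as for the temporal ratio pushes the additive slack into $o(r^*)$ and completes the proof.
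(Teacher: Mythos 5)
Your overall architecture and your temporal half coincide with the paper's: the paper likewise reduces the claim to the two sufficient conditions $C_\rho\, C_3(r)^2 \rho_T^k \leq C_\lambda \lambda_T^{k^*}$ and $C_\rho\, h(r)^2 \rho_S^{r} \leq C_\lambda \lambda_S^{r^*}$, and its \Cref{lemma:temporal-upper-and-lower} proves $\rho_T^4 \leq \lambda_T$ by exactly your monotonicity argument for $y \mapsto (y-1)/(y+1)$.

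The gap is the spatial inequality. Your target $\rho_S^{16} \leq \lambda_S$ is \emph{false} precisely in the intermediate regime you flagged as delicate. Take $\beta := \Delta\ell_S/\mu = 224$, so $\sqrt{1+\beta} = 15$ and $\rho_S^2 = 7/8$, giving $\rho_S^{16} = (7/8)^8 \approx 0.3436$; but $\lambda_S = \max\left(\tfrac{224}{675}, \left(1 - \tfrac{4\sqrt{3}}{\sqrt{224}}\right)^2\right) = \max(0.3319,\, 0.2885) \approx 0.3319 < \rho_S^{16}$. The failure persists for $\beta$ roughly between $200$ and $900$ (the worst case sits near $\beta \approx 270$, where the required exponent is about $18$), so no amount of case analysis will rescue the exponent $16$. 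The paper's \Cref{lemma:spatial-upper-and-lower} accordingly proves only the weaker $\rho_S^{32} \leq \lambda_S$, via an explicit three-case split: $\beta \geq 224$ using the elementary bound $(1-x)^{2n_0} \leq 1 - n_0 x$, the range $1 \leq \beta < 224$ by the crude numerical bounds $\rho_S^2 \leq 7/8$ and $\lambda_S \geq 1/6$, and $\beta < 1$ via $\rho_S^2 \leq \sqrt{\beta}/4$. You should adopt that weaker exponent; be aware, though, that chaining $\rho_S^{32} \leq \lambda_S$ with the spatial term $h(r)^2\rho_S^r = \tilde{O}(\rho_S^{r/2})$ formally yields $r = (64+o(1))r^*$ rather than $32$ --- recovering the stated constant is exactly why you reached for $\rho_S^{16} \leq \lambda_S$, and the paper's final step is itself terse on this factor of two, so state explicitly which exponent your final constant rests on.
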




Finally, note that we establish Corollary \ref{coro:resource-augmentation} based on the local perturbation bound in \Cref{thm:networked-exp-decay} rather than \Cref{thm:networked-exp-decay-tight} for simplicity, because it does not make assumptions on the relationship among $\mu, \ell_T,$ and $\ell_S$. We expect that \Cref{thm:networked-exp-decay-tight} can give better resource augmentation bounds under stronger assumptions on $\mu, \ell_T,$ and $\ell_S$.

\section{Concluding Remarks}

In this work, we introduce and study a novel form of decentralized online convex optimization in a networked system, where the local actions of each agent are coupled by temporal interactions and spatial interactions. We propose a decentralized online algorithm, LPC, which leverages all available information within a prediction horizon of length $k$ and a communication radius of $r$ to achieve a competitive ratio of $1 + \tilde{O}(\rho_T^k) + \tilde{O}(\rho_S^r)$. Our lower bound result shows that this competitive ratio is order optimal. Our results imply that the two types of resources, the prediction horizon and the communication radius, must be improved \textit{simultaneously} in order to obtain a competitive ratio that converges to $1$. That is, it is not enough to either have a large communication radius or a long prediction horizon, the combination of both is necessary to approach the hindsight optimal performance.




A limitation of this work is that we have considered only exact predictions in the available information model, the LPC algorithm, and its analysis. Considering inexact predictions is an important future goal and we are optimistic that our work can be generalized in that direction using the roadmap in Appendix \ref{sec:roadmap-to-inexact}.

\textbf{Acknowledgement}:
We would like to thank ICML reviewers and meta-reviewer, as well as Yisong Yue for their valuable feedback on this work. This work is supported by NSF grants ECCS-2154171, CNS-2146814, CPS-2136197, CNS-2106403, NGSDI-2105648, CMMI-1653477, with additional support from Amazon AWS, PIMCO, and the Resnick Sustainability Insitute. Yiheng Lin was supported by Kortschak Scholars program.

\bibliography{main}
\bibliographystyle{icml2022}

\newpage
\appendix
\onecolumn

\section{Notation Summary and Definitions}\label{apx:notation-table}

The notation we use throughout the paper is summarized in the following two tables. 

\begin{table}[H]
  \centering
  \caption{Notation related to the graph/network structures.}
  \begin{tabular}{c|l}
    \specialrule{1.5pt}{0pt}{0pt}
    \textbf{Notation} & \hspace*{12.5em}\textbf{Meaning} \\
    \specialrule{0.4pt}{0pt}{0pt}
    $\mathcal{G} = (\mathcal{V}, \mathcal{E})$ & The network of agents connected by undirected edges;  \\
    $\dist(u, v)$ & The graph distance (i.e. the length of the shortest path) between two vertices $u$ and $v$ in $\mathcal{G}$; \\
    $N_v^r$ & The $r$-hop neighborhood of vertex/agent $v$ in $\mathcal{G}$, i.e., $N_v^r := \{u \in \mathcal{V}\mid \dist(u, v) \leq r\}$;\\
    $\partial N_v^r$ & The boundary of the $r$-hop neighborhood of vertex/agent $v$, i.e., $\partial N_v^r = N_v^r \setminus N_v^{r-1}$;\\
    $h(r)$ & The size of the largest $r$-hop boundary in $\mathcal{G}$, i.e., $h(r) := \sup_v\abs{\partial N_v^r}$;\\
    $\Delta$ & The maximum degree of any vertex $v$ in $\mathcal{G}$;\\
    $\mathcal{E}(S)$ & The set of all edges that have both endpoints in $S$, where $S \subseteq \mathcal{V}$;\\
    $S_+$ & The extension of $S$ by 1-hop, i.e., $S_+ = S \cup \{u\mid \exists v \in S \text{ s.t. } (u, v) \in \mathcal{E}\}$;\\
    $N_{(t, v)}^{(k, r)}$ & $\{\tau \in \mathbb{Z}\mid t \leq \tau < t+k\} \times N_v^r$, which is a set of (time, vertex) pairs;\\
    $\partial N_{(t, v)}^{(k, r)}$ & $N_{(t,v)}^{(k, r)} \setminus N_{(t,v)}^{(k-1, r-1)}$, which is a set of (time, vertex) pairs;\\
    \specialrule{1.5pt}{0pt}{0pt}
  \end{tabular}
\end{table}

\begin{table}[H]
  \centering
  \caption{Notation related to the optimization problems.}
  \begin{tabular}{c|l}
    \specialrule{1.5pt}{0pt}{0pt}
    \textbf{Notation} & \hspace*{12.5em}\textbf{Meaning} \\
    \specialrule{0.4pt}{0pt}{0pt}
    $\norm{\cdot}$ & The (Euclidean) 2-norm for vectors and the induced 2-norm for matrices;\\
    $\horizonlength$ & The whole horizon length of Networked OCO problem;\\
    $[\horizonlength]$ & The sequence of integers $1, 2, \ldots, \horizonlength$;\\
    $\mathbb{S}^m$ & For any positive integer $m$, $\mathbb{S}^m$ denotes the set of all symmetric real $m \times m$ matrices;\\ 
    $y_{t_1 : t_2}$ & The sequence $y_{t_1}, y_{t_1 + 1}, \ldots, y_{t_2}$, for $t_2 \geq t_1$;\\
    $x_t^v$ & The individual action of agent $v$ at time step $t$. It is a vector in $\mathbb{R}^n$;\\
    $x_t^S$ & The joint action of all agents in set $S \subseteq \mathcal{V}$ at time $t$, i.e., $x_t^S = \{x_t^v\}_{v \in S}$;\\
    $x_t$ & The joint action of all agents in $\mathcal{V}$ at time $t$, i.e., $x_t = \{x_t^v\}_{v \in \mathcal{V}}$. It is a shorthand of $x_t^\mathcal{V}$;\\
    $x_t^*$ & The offline optimal joint action of all agents at time $t$;\\
    $x_{\tau\mid t}^*$ & The clairvoyant joint decision of all agents at time $\tau$ given that the joint decision is $x_t$ at time $t$;\\
    $f_t^v(x_t^v)$ & The node cost function for agent $v \in \mathcal{V}$ at time step $t$;\\
    $c_t^v(x_t^v, x_{t-1}^v)$ & The temporal interaction cost function for agent $v \in \mathcal{V}$ at time step $t$;\\
    $s_t^e(x_t^v, x_t^u)$ & The spatial interaction cost for edge $e = (v, u) \in \mathcal{E}$ at time step $t$;\\
    $\mu$ & The strong convexity constant of node costs $f_t^v$;\\
    $\ell_f, \ell_T, \ell_S$ & The smoothness constant of node costs, temporal interaction costs, and spatial interaction costs;\\
    $D_t^v$ & The feasible set of $x_t^v$ for agent $v$ at time $t$. It is a convex subset of $\mathbb{R}^n$;\\
    $\minimizer_t^v$ & The minimizer of node cost function for $v$ at time $t$ subject to $D_t^v$, i.e., $\minimizer_t^v = \argmin_{y \in D_t^v} f_t^v(y)$;\\
    $f_t^S(x_t^{S_+})$ & The total node costs and spatial interaction costs over a subset $S \subseteq \mathcal{V}$ at time $t$, i.e.,\\
    &$f_t^{S}(x_t^S) := \sum_{v \in S} f_t^v(x_t^v) + \sum_{(v, u) \in \mathcal{E}(S)} s_t^{(v, u)}(x_t^v, x_t^u)$;\\
    $c_t^S(x_t^S)$ & The total temporal interaction costs over a subset $S \subseteq \mathcal{V}$ at time $t$, i.e., $c_t^S(x_t^S) := \sum_{v \in S} c_t^v(x_t^v, x_{t-1}^v)$;\\
    $f_t(x_t)$ & The total node costs and spatial interaction costs over a $\mathcal{V}$ at time $t$. A shorthand of $f_t^\mathcal{V}(x_t^\mathcal{V})$;\\
    $c_t(x_t)$ & The total temporal interaction costs over $\mathcal{V}$ at time $t$. A shorthand of $c_t^\mathcal{V}(x_t^{\mathcal{V}})$;\\
    $\psi_{(t, v)}^{(k, r)}(\cdot, \cdot)$ & The optimal individual decisions in $N_{(t, v)}^{(k, r)}$ when the decision boundaries formed by $\{t-1\}\times N_v^r$\\
    &and $\partial N_{(t, v)}^{(k, r)}$ are fixed as parameters;\\
    $\tilde{\psi}_t^p(\cdot, \cdot)$ & The optimal global trajectory $x_t, x_{t+1}, \ldots, x_{t+p-2}$ when $x_{t-1}$ and $x_{t+p-1}$ are fixed as parameters;\\
    $\tilde{\psi}_t(\cdot)$ & The optimal global trajectory $x_t, x_{t+1}, \ldots, x_{T}$ when $x_{t-1}$ is fixed as the parameter;\\
    \specialrule{1.5pt}{0pt}{0pt}
  \end{tabular}
\end{table}

In addition to the notation in the tables above, we make use of the concepts of strong convexity and smoothness throughout this paper.
\begin{definition}
    For a fixed dimension $m \in \mathbb{Z}_+$, let $D \subset \bR^m$ be a convex set, and suppose function $\hat{h} : D \to \bR$ is a differentiable function. Then, $\hat{h}$ is called $\ell$-smooth for some constant $\ell \in \bR_{\geq 0}$ if 
    \[ \hat{h}(y) \leq \hat{h}(x) + \langle \nabla \hat{h}(x), y-x \rangle + \frac{\ell}{2} \norm{y - x}_2^2, \forall x, y \in \bR^m,\]
    and is called $\mu$-strongly convex for some constant $\mu \in \bR_{\geq 0}$ if 
    \[ \hat{h}(y) \geq \hat{h}(x) + \langle \nabla \hat{h}(x), y-x \rangle + \frac{\mu}{2} \norm{y - x}_2^2, \forall x, y \in \bR^m.\]
    Here $\langle \cdot, \cdot \rangle$ denotes the dot product of vectors.
\end{definition}

\section{Example: Multiproduct Pricing}\label{apx:application}
The networked online convex optimization problem captures many applications where individual agents must make decisions online in a networked system. In this section, we give a concrete motivating example from multiproduct pricing problems. Many recent books of revenue management \cite{Talluri2006, Gallego2019} and papers \cite{song2006measuring,caro2012clearance,candogan2012optimal, Chen2015} describe numerous instances of this problem. In our example below, we follow a similar pricing model as in \citet{candogan2012optimal}.  

Consider a setting where a large company sells $n$ different products and wants to maximize its revenue by adjusting prices adaptively in a time-varying market. Each vertex/agent $v \in \mathcal{V}$ corresponds to a product and $x_t^v$ denotes its price at time $t$. Two products $v$ and $u$ are connected by an edge if they interact, e.g., because the products are complements or substitutes.


We assume a classical linear demand model \cite{Talluri2006, Gallego2019}, 
where the demand of $v$ at time $t$, denoted as $d_t^v$, is given by
\[d_t^v = \underbrace{a_t^v - k_t^v x_t^v}_\text{Part 1} \overbrace{ - \sum_{u \in N_v^1\setminus \{v\}} \eta_t^{(u\to v)} x_t^u}^\text{Part 2} \underbrace{+ b_t^v x_{t-1}^v}_\text{Part 3},\]
with parameters $a_t^v, k_t^v,  b_t^v > 0$ and $\eta_t^{(u\to v)} \in \mathbb{R}$.
Here, Part 1 corresponds to the nominal demand at price $x_t^v$; Part 2 adds the network externalities from $v$'s complements/substitutes, and Part 3 reflects the pent up demand of product $v$ due to high price at time $t-1$. Note that the coefficient $\eta_t^{(u \to v)}$ can be different with $\eta_t^{(v \to u)}$. To simplify the notations, for each undirected edge $e = (u, v)$, we define an aggregate coefficient $\gamma_t^{e} \coloneqq \frac{1}{2}\left(\eta_t^{(u \to v)} + \eta_t^{(v \to u)}\right)$.

The full revenue maximization problem can be written as
\begin{equation}
\begin{aligned}
\max \quad & \sum_{t = 1}^\horizonlength \sum_{v \in \mathcal{V}} x_t^v d_t^v  = \sum_{t = 1}^\horizonlength \sum_{v \in \mathcal{V}} x_t^v({a_t^v - k_t^v x_t^v}  - \sum_{u \in N_v^1\setminus \{v\}} \eta_t^{(u\to v)} x_t^u + b_t^v x_{t-1}^v)\\
\textrm{s.t.} \quad & 0 \le x_t^v \le \overline p_t^v,
\end{aligned}
\end{equation}
which is equivalent to the following:
\begin{equation}\label{opt-problem:min}
\begin{aligned}
\min \quad & -\sum_{t = 1}^\horizonlength\sum_{v \in \mathcal{V}} x_t^v d_t^v  = \sum_{t = 1}^\horizonlength\sum_{v \in \mathcal{V}}\Bigg[ x_t^v(-a_t^v + k_t^v x_t^v + \sum_{u \in N_v^1\setminus \{v\}} \eta_t^{(u\to v)} x_t^u - b_t^v x_{t-1}^v)\Bigg]\\
\textrm{s.t.} \quad & 0 \le x_t^v \le \overline p_t^v
\end{aligned}
\end{equation}

We assume that the product's own price elasticity coefficient $k_t^v$ is uniformly larger than the sum of magnitudes of cross-elasticity coefficients, i.e., exist $\mu > 0$, s.t.
\[\xi_t^v := k_t^v - \sum_{u \in N_v^1\setminus {v}}|\gamma_t^{(u, v)}| - \frac{b_{t}^v + b_{t+1}^v}{2} \geq \mu/2 > 0\]
holds for any $(t, v)$.
Further, we assume $\sup_{v \in \mathcal{V}, t \in \horizonlength}k_t^v \le \ell_f/2$, $\sup_{v \in \mathcal{V}, t \in \horizonlength}b_t^v \le b$, and $\sup_{(u, v) \in \mathcal{E}, t \in \horizonlength}|\eta_t^{(u\to v)}| \le \gamma$.

We now observe that this problem fits within our framework with node, spatial, and temporal costs which are quadratic and defined as follows.
\begin{align*}
    f_t^v(x_t^v) \coloneqq {}& \xi_t^v \left(x_t^v - \frac{a_t^v}{2\xi_t^v}\right)^2,\\
    s_t^{(u, v)}(x_t^u, x_t^v) \coloneqq {}& |\gamma_t^{(u, v)}| \left(x_t^u + sgn\left(\gamma_t^{(u, v)}\right)\cdot x_t^v\right)^2,\\
    c_t^{v}(x_t^v, x_{t-1}^v) \coloneqq {}& \frac{b_t^v}{2}\left(x_t^v - x_{t-1}^v\right)^2.
\end{align*}
Note that $f_t^v(x_t^v)$ is $\xi_t^v (x_t^v)^2 - {a_t^v}{x_t^v}$ plus a constant $(a_t^v)^2/(4 \xi_t^v)$, and the interaction functions can be rewritten as
$$s_t^{(u, v)}(x_t^u, x_t^v) = |\gamma_t^{(u, v)}| \Bigg((x_t^u)^2 +  (x_t^v)^2\Bigg) +  2\gamma_t^{(u, v)} x_t^v x_t^u, \quad c_t^{v}(x_t^v, x_{t-1}^v) = \frac{b_t^v}{2}\Bigg((x_t^v)^2 +  (x_{t-1}^v)^2\Bigg) - b_tx_{t-1}^v x_{t}^v.$$
Summing, we see that 
\begin{align}
\sum_{t = 1}^\horizonlength\sum_{v \in \mathcal{V}} f_t^v(x_t^v) + c_t^v(x_t^v, x_{t-1}^v) + \sum_{t=1}^\horizonlength \sum_{e \in \mathcal{E}}s_t^e(x_t^u, x_t^v) = \textup{(Objective in \eqref{opt-problem:min})} + \sum_{t=1}^\horizonlength \sum_{v \in \mathcal{V}} (a_t^v)^2/(4 \xi_t^v) \, .
\end{align}

Hence the optimal solution of \eqref{opt-problem:min} is the same as the following problem: 

\begin{equation}
\begin{aligned}
\min \quad & \sum_{t = 1}^\horizonlength\sum_{v \in \mathcal{V}} f_t^v(x_t^v) + c_t^v(x_t^v, x_{t-1}^v) + \sum_{t=1}^\horizonlength \sum_{e \in \mathcal{E}}s_t^e(x_t^u, x_t^v)\\
\textrm{s.t.} \quad & 0 \le x_t^v \le \overline p_t^v
\end{aligned}
\end{equation}
where the node cost function $f_t^v(x_t^v)$ is nonnegative, $\mu$-strongly convex, and $\ell_f$-smooth;
the spatial interaction function is nonnegative, convex and $(4\gamma)$-smooth; the temporal interaction function is nonegative, convex and $(2b)$-smooth.

The decentralized nature of our policy is important in this setting. Interpretable pricing algorithms \cite{biggs2021model} are attractive in practice. Our local pricing algorithm is indeed interpretable, since the current price of a given product is transparently determined by reliable predictions of demand in the near future as well as interactions  with directly related products. 

In addition, exactly solving the global multiproduct pricing problem can be computationally challenging in practice, especially when the network is large. For example, large online e-commerce companies  maintain millions of products, which makes the entire network difficult to store, let alone do computation over. Moreover, due to the ease of changing prices, e-commerce companies often use dynamic pricing and change prices on a daily (or quicker) basis, which magnifies the computational burden.


\subsection{Competitive Bound}
We end our discussion of multiproduct pricing by showing how the competitive bound in \Cref{coro:CR-LPC} can be applied to the revenue maximization problem of product networks through the use of the following lemma.   

\begin{lemma}\label{le:cr-rev}
Suppose the competitive ratio of our general cost minimization problem is $CR(k,r)$, which a function of prediction horizon $k$ and communication radius $r$. Suppose $\sup_{(u, v) \in \mathcal{E}, t \in [\horizonlength]}a_t^u/a_t^v \le \tilde b$, $\sup_{v \in \mathcal{V}, t \in [\horizonlength]} \frac{a_t^v}{\overline{p}_t^v} \le \tilde c $, then the competitive ratio for the corresponding revenue maximization problem, defined as $rev(ALG)/rev(OPT)$, is at least $1 - \frac{\eta}{2}(CR(k, r)-1)$, where $\Delta$ denotes the degree of the product network and $\eta \coloneqq \max\{2(\ell_f + \Delta \tilde b \gamma)/\mu, \tilde c/\mu\}$.
\end{lemma}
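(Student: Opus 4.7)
The plan is to exploit the identity $\mathrm{cost}(x) + \mathrm{rev}(x) = C$ with $C := \sum_{t,v} (a_t^v)^2/(4\xi_t^v)$, which is a direct consequence of the derivation displayed above the lemma. Combined with $\mathrm{cost}(ALG) \le CR(k,r)\cdot\mathrm{cost}(OPT)$, one computes
\begin{align*}
\frac{\mathrm{rev}(ALG)}{\mathrm{rev}(OPT)} \;=\; \frac{C - \mathrm{cost}(ALG)}{C - \mathrm{cost}(OPT)} \;\ge\; 1 - (CR(k,r) - 1)\cdot \frac{\mathrm{cost}(OPT)}{\mathrm{rev}(OPT)}.
\end{align*}
So the task reduces to proving $\mathrm{cost}(OPT)/\mathrm{rev}(OPT) \le \eta/2$, which via $\mathrm{cost}(OPT) = C - \mathrm{rev}(OPT)$ is equivalent to a suitable lower bound on $\mathrm{rev}(OPT)$ in terms of $C$ and $\eta$.

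To produce such a lower bound I will evaluate $\mathrm{rev}$ at an explicit feasible strategy. The natural candidate is the scaled unconstrained node-cost minimizer $\tilde x_t^v := \lambda \cdot a_t^v/(2\xi_t^v)$, which satisfies $0 \le \tilde x_t^v \le \overline p_t^v$ whenever $\lambda \le \mu/\tilde c$ (using $\xi_t^v \ge \mu/2$ and $\overline p_t^v \ge a_t^v/\tilde c$). Expanding $\mathrm{rev}(\tilde x)$ with the demand formula splits it into four pieces: (i) a linear-in-$\lambda$ part equal to $2\lambda C$; (ii) a diagonal quadratic part of size at most $(\ell_f/\mu)\lambda^2 C$ via $k_t^v \le \ell_f/2$; (iii) a spatial cross-term of magnitude at most $(\Delta \tilde b \gamma/\mu)\lambda^2 C$, derived from $|\eta_t^{(u\to v)}| \le \gamma$ together with the edge-ratio bound $a_t^u \le \tilde b a_t^v$ via a Gershgorin / degree-summation estimate; and (iv) a temporal cross-term which is nonnegative and can safely be discarded. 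Collecting these gives
\begin{align*}
\mathrm{rev}(\tilde x) \;\ge\; \lambda C\bigl[\,2 - \lambda(\ell_f + \Delta \tilde b \gamma)/\mu\,\bigr].
\end{align*}

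Choosing $\lambda^\star := \min\{\mu/(\ell_f + \Delta \tilde b \gamma),\; \mu/\tilde c\}$ keeps the bracket at least $1$ in either regime, hence $\mathrm{rev}(\tilde x) \ge \lambda^\star C = \mu C/\max\{\ell_f + \Delta \tilde b \gamma,\;\tilde c\}$; comparing with $\eta = \max\{2(\ell_f + \Delta \tilde b \gamma)/\mu,\;\tilde c/\mu\}$ and substituting back into the first display yields the claimed revenue-ratio inequality. The hard part will be step~(iii): a naive symmetric AM-GM on $\sum_{t,e} 2\gamma_t^e \tilde x_t^u \tilde x_t^v$ only produces a coefficient like $2\Delta\gamma$ and fails to exploit $\tilde b$ at all. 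The fix is to apply $a_t^u \le \tilde b a_t^v$ asymmetrically to each directed (ordered) pair (equivalently, bound the spectral norm of the interaction matrix through a reweighted Gershgorin estimate), so that the degree-summation bookkeeping yields the tight factor $\Delta \tilde b \gamma$ rather than a doubled constant. A secondary subtlety lies in the case split according to whether $\tilde c$ is smaller or larger than $\ell_f + \Delta \tilde b \gamma$: when feasibility is the tighter constraint one substitutes $\lambda = \mu/\tilde c$ and must verify that the bracket remains above $1$, which coincides exactly with the regime where the maximum defining $\eta$ flips between its two arguments.
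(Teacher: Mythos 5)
Your proposal is correct and follows essentially the same route as the paper: reduce via the identity $rev = C - cost$ to lower-bounding $rev(OPT)$ against $C$, then certify that bound by evaluating the revenue at a feasible price vector proportional to $a_t^v$ (the paper takes $x_t^v = a_t^v/(\eta\mu)$ and proves the pointwise demand bound $d_t^v \ge a_t^v/2$; your choice $\lambda a_t^v/(2\xi_t^v)$ with a global four-piece expansion of the revenue is the same computation organized differently). The only deviations are factor-of-two bookkeeping slips --- under $\xi_t^u \ge \mu/2$ your spatial cross-term comes out as $2\Delta\tilde b\gamma\lambda^2 C/\mu$ rather than $\Delta\tilde b\gamma\lambda^2 C/\mu$, and the $\tilde c$-binding case then lands at $\eta$ rather than $\eta/2$ --- but the paper's own final inequality $rev(OPT) \ge \frac{2}{\eta}C$ carries the same looseness (its displayed chain only yields $\frac{1}{\eta}C$ from $\xi_t^v \ge \mu/2$), so this is not a substantive gap.
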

\begin{proof}
We define $C \coloneqq \sum_{t, v} (a_t^v)^2/(4\xi_t^v)$. 
Suppose
$$CR(k, r)\cdot cost(OPT) \ge cost(ALG),$$ then
$$CR(k, r)\cdot (-rev(OPT) + C) \ge (-rev(ALG) + C).$$

Rearranging the terms yields 
\begin{equation}\label{eq:CR-cost-revenue}
(CR(k, r)-1) \cdot C \ge CR(k, r) rev(OPT) - rev(ALG).
\end{equation}

To find a lower bound on $rev(OPT)$, we choose a pricing strategy such that $x_t^v = \frac{a_t^v}{\eta\mu} \ge 0$ where $\eta = \max\{2(\ell_f + \Delta \tilde b \gamma)/\mu, \tilde c/\mu\}$. 
We first check that the demand is always nonnegative under this strategy: 
\begin{align*}
a_t^v - k_t^v\frac{a_t^v}{\eta\mu} - \sum_{u \in N_v^1\setminus \{v\}} \eta_t^{(u\to v)} \frac{a_t^u}{\eta\mu} + b_t^v \frac{a_{t-1}^v}{\eta\mu} &\ge a_t^v - k_t^v\frac{a_t^v}{\eta\mu} - \sum_{u \in N_v^1\setminus \{v\}} \eta_t^{(u\to v)} \frac{a_t^u}{\eta\mu} \\
&\ge a_t^v - k_t^v\frac{a_t^v}{\eta\mu} - \sum_{u \in N_v^1\setminus \{v\}} \tilde b\gamma  \frac{a_t^v}{\eta\mu} \\
&\ge a_t^v (1 - \frac{\ell_f + \Delta \tilde b \gamma }{\eta \mu}) \\
&\ge \frac{a_t^v}{2}.
\end{align*}
Moreover, 
$$x_t^v \le a_t^v/\tilde c \le \overline{p}_t^v.$$
Hence this is a feasible price strategy. 

We lower bound the optimal revenue:
\begin{align*}
rev(OPT) &\ge \sum_{t = 1}^\horizonlength\sum_{v \in \mathcal{V}} \frac{a_t^v}{\eta\mu}({a_t^v - k_t^v \frac{a_t^v}{\eta\mu}} - \sum_{u \in N_v^1\setminus \{v\}} \eta_t^{(u\to v)} \frac{a_t^u}{\eta\mu} + b_t^v \frac{a_{t-1}^v}{\eta\mu})\\
& \ge \sum_{t = 1}^\horizonlength\sum_{v \in \mathcal{V}} \frac{a_t^v}{\eta\mu} \frac{a_t^v}{2}\\
& \ge \frac{2}{\eta}C.
\end{align*}
We further divide \Cref{eq:CR-cost-revenue} by $rev(OPT)$ to obtain
$$(CR(k, r) - 1) \frac{C}{rev(OPT)} \ge CR(k, r) - rev(ALG)/rev(OPT).$$

Since $CR(k, r) \ge 1$ for the cost minimization problem, 
$$rev(ALG)/rev(OPT) \ge 1 - (CR(k, r) - 1) \frac{C}{rev(OPT)}.$$


This allows us to complete the proof as follows
$$rev(ALG)/rev(OPT) \ge 1 - \frac{\eta}{2}(CR(k, r) - 1).$$
\end{proof}

\section{Proof Outline}\label{apx:proof-outline}

In this section, we outline the major novelties in our proofs for the tighter exponentially decaying local perturbation bound in \Cref{thm:networked-exp-decay-tight} and the main competitive ratio bound for LPC in \Cref{coro:CR-LPC}.  The full details of the proofs of these and other results are in the appendices following this one.

\subsection{Refined Analysis of Perturbation Bounds}\label{sec:outline-network-perturbation-bound}

We begin by outlining the four-step structure we use to prove Theorem \ref{thm:networked-exp-decay-tight}.  Our goal is to highlight the main ideas, while deferring a detailed proof to Appendix \ref{apx:networked-exp-decay-tight}.

\subsubsection*{Step 1. Establish first order equations}
We define $h$ as the objective function in \eqref{equ:LPC-opt-problem}, where actions on the boundary are fixed as $\{z_{\tau}^u | (\tau, u) \in  \partial N_{(t, v)}^{(k, r)}\}$ and the actions at time $t-1$ are fixed as $\{x_{t-1}^{u} | u \in N_v^r \}$. We denote those fixed actions as system parameter $$\zeta \coloneqq (x_{t-1}^{(N_v^r)}, \{z_{\tau}^u | (\tau, u) \in  \partial N_{(t, v)}^{(k, r)}\}).$$
To avoid writing the time index $t$ repeatedly, we use $\hat x_i$ to denote actions at time $t-1+i$ for $0 \le i \le k$.  The main lemma in for this step is the following.

\begin{lemma}\label{le:foc}
Given $\theta \in \mathbb{R}$, system parameter $\zeta$ and perturbation vector $e$, we have
\begin{equation*}\label{eq:foc}
\frac{d}{d\theta} \psi(\zeta + \theta e) = M^{-1} \Bigg(R^{(1)}e_0 + R^{(k-1)}e_k + \sum_{\tau = 1}^{k-1} K^{(\tau)} e_{\tau}\Bigg)
\end{equation*}
where $$M= \nabla_{\hat x_{1:k-1}}^2 h(\psi(\zeta + \theta e), \zeta + \theta e),$$ 
$$R^{(1)} \coloneqq -\nabla_{\hat x_0}\nabla_{\hat x_{1:k-1}}  h(\psi(\zeta + \theta e), \zeta + \theta e),$$ 
$$R^{(k-1)} \coloneqq -\nabla_{\hat x_k}\nabla_{\hat x_{1:k-1}}  h(\psi(\zeta + \theta e), \zeta + \theta e),$$ 
$$K^{(\tau)} \coloneqq  -\nabla_{\hat x_{\tau}^{(\partial N_v^r)}}\nabla_{\hat x_{1:k-1}} h(\psi(\zeta + \theta e), \zeta + \theta e).$$
\end{lemma}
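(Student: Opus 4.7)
\textbf{Proof plan for Lemma \ref{le:foc}.}

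The plan is to treat this as a direct application of the implicit function theorem to the first-order optimality conditions of the optimization problem \eqref{equ:LPC-opt-problem} defining $\psi$. First I would set up notation: the interior decision variables are $\hat x_{1:k-1}$ (corresponding to the indices in $N_{(t,v)}^{(k-1,r-1)}$, i.e., times $t,\dots,t+k-2$ and vertices in $N_v^{r-1}$), and the system parameter $\zeta$ decomposes as $\zeta = (e_0, e_1, \ldots, e_k)$, where $e_0$ collects the fixed values $\hat x_0$ at time $t-1$ on $N_v^r$, $e_k$ collects the fixed values $\hat x_k$ at time $t+k-1$ on $N_v^r$, and for $1 \le \tau \le k-1$, $e_\tau$ collects the fixed boundary values $\hat x_\tau^{(\partial N_v^r)}$ at time $t-1+\tau$. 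This is exactly the partition of $\partial N_{(t,v)}^{(k,r)} \cup (\{t-1\}\times N_v^r)$ induced by the time slices.

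Next I would argue that $\psi$ is well-defined and continuously differentiable in a neighborhood of $\zeta$. By \Cref{assump:costs-and-feasible-sets}, the node costs are $\mu$-strongly convex and the interaction costs are convex, so the objective $h(\cdot, \zeta)$ is strongly convex in the interior variables $\hat x_{1:k-1}$; hence the problem has a unique minimizer $\psi(\zeta)$. Assuming we work at a point where the inequality constraints are either strictly satisfied or the active constraints form a regular system (so that LICQ and strict complementary slackness hold generically, making the set of active constraints locally constant), the KKT system reduces to the stationarity condition
\begin{equation}\label{eq:proof-plan-foc}
\nabla_{\hat x_{1:k-1}} h\bigl(\psi(\zeta+\theta e),\, \zeta+\theta e\bigr) \;=\; 0,
\end{equation}
restricted to the tangent space of the active constraints. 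Since $h$ is $C^2$ (every local ingredient is in $C^2$) and $M = \nabla^2_{\hat x_{1:k-1}} h$ is positive definite on this tangent space by strong convexity of the node costs, the implicit function theorem applies and $\psi$ is $C^1$ in $\zeta$.

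Then I would simply differentiate \eqref{eq:proof-plan-foc} in $\theta$ using the chain rule, which gives
\begin{equation*}
M \cdot \frac{d}{d\theta}\psi(\zeta+\theta e) \;+\; \nabla_\zeta \nabla_{\hat x_{1:k-1}} h\bigl(\psi(\zeta+\theta e),\, \zeta+\theta e\bigr) \cdot e \;=\; 0.
\end{equation*}
Splitting $\nabla_\zeta \nabla_{\hat x_{1:k-1}} h \cdot e$ according to the decomposition of $\zeta$ into $e_0$, $e_k$, and the $e_\tau$ for $1\le \tau\le k-1$ and recognizing each block as $-R^{(1)}$, $-R^{(k-1)}$, and $-K^{(\tau)}$ respectively yields
\begin{equation*}
M \cdot \tfrac{d}{d\theta}\psi(\zeta+\theta e) \;=\; R^{(1)}e_0 \,+\, R^{(k-1)}e_k \,+\, \sum_{\tau=1}^{k-1} K^{(\tau)} e_\tau,
\end{equation*}
and inverting $M$ (which is valid because it is PD) gives the claimed identity.

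The main obstacle I anticipate is rigorously handling the inequality constraints $x_\tau^u \in D_\tau^u$: if some of them are active at $\psi(\zeta)$, the derivative only exists in a generalized sense, and one must either invoke strict complementary slackness (so that the active set is locally constant and \eqref{eq:proof-plan-foc} holds on a reduced variable set) or reduce to the interior case by a limiting argument. For the purposes of the perturbation analysis it likely suffices to prove the identity when $\psi(\zeta)$ lies in the relative interior of the feasible set and then extend by continuity; the other technicalities (invertibility of $M$, differentiability of $\psi$) are essentially automatic from the strong convexity in \Cref{assump:costs-and-feasible-sets}.
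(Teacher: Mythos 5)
Your proof is correct and follows essentially the same route as the paper: write the stationarity condition $\nabla_{\hat x_{1:k-1}} h(\psi(\zeta+\theta e), \zeta+\theta e)=0$ at the unique minimizer, differentiate in $\theta$ by the chain rule, split the cross-derivative term according to the time-slice decomposition of the boundary parameters into $e_0$, $e_k$, and the $e_\tau$, and invert the positive-definite Hessian $M$. The only (minor) divergence is the treatment of the feasible-set constraints: the paper simply proves the perturbation bounds assuming $D_t^v=\mathbb{R}^n$ and recovers the constrained case afterwards via a logarithmic-barrier limit (Appendix \ref{apx:adding-constraints}), whereas you sketch an active-set/strict-complementarity argument; both are viable, but the paper's route sidesteps any reasoning about which constraints are active.
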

The proof for Lemma \ref{le:foc} using first order conditions at the global optimal solution for convex function $h(\cdot , \zeta + \theta e)$ and then takes derivatives with respect to to $\theta$. See Appendix \ref{step1} for a proof.

\subsubsection*{Step 2: Exploit the structure of matrix $M$} $M$ is a hierarchical block matrix with the first level of dimension $(k-1) \times (k-1)$. When fixing the first level indices (i.e. time indices) in $M$, the lower level matrices are non-zero only if the difference in the time indices is $\le 1$. Hence we decompose $M$ to a block diagonal matrix $D$ and tri-diagonal block matrix $A$ with zero matrix on the diagonal. Each diagonal block in $D$ is a graph-induced banded matrix, which captures the Hessian of $h$ in a single time step. Denote each diagonal block as $D_{i,i}$ for $1 \le i \le k-1$. Further, for $1 \le i \le k-1$, $A_{i, i-1}$ (similarly $A_{i, i+1}$) captures the temporal correlation of individual's action between consecutive time steps. Given this decomposition, 
$$M^{-1} = (D + A)^{-1} = D^{-1} (I + AD^{-1})^{-1}.$$
For the ease of notation, we denote $I + AD^{-1}$ by $P.$ 
Note that $P$ is not necessarily a symmetric matrix. Nevertheless, under technical conditions on $P$'s eigenvalues, we have the following power series expansion \cite{2020}. The details are presented in the \Cref{le: H-inverse} in Appendix \ref{apx:networked-exp-decay-tight}. 
\begin{lemma}\label{le: H-inverse}
Under the condition $\mu > 2\ell_{T}$, we have 
\begin{equation}\label{eq:power-series}
    P^{-1} = \sum_{\ell \ge 0} (I - P)^{\ell}.
\end{equation}
\end{lemma}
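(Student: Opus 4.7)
The plan is to recognize the claim as the Neumann series for the inverse of $P$ and reduce everything to a norm bound on $I - P = -AD^{-1}$. Since $I - P = -AD^{-1}$, the standard matrix identity $(I-X)^{-1} = \sum_{\ell \ge 0} X^{\ell}$ applied with $X = -AD^{-1}$ immediately yields $P^{-1} = \sum_{\ell \ge 0} (I-P)^{\ell}$, provided $\norm{AD^{-1}} < 1$ so that the series converges in operator norm. Thus the bulk of the work is to verify this norm bound under the hypothesis $\mu > 2\ell_T$.

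First, I would show $\norm{D^{-1}} \le 1/\mu$. Each diagonal block $D_{i,i}$ is $\nabla_{\hat x_i}^2 h$, which decomposes as a sum of the node-cost Hessians $\nabla^2 f_{t-1+i}^v$ (each at least $\mu I$ by $\mu$-strong convexity, \Cref{assump:costs-and-feasible-sets}), the spatial interaction Hessians $\nabla^2 s_{t-1+i}^e$ in the $\hat x_i$ block (positive semidefinite by convexity), and the diagonal $\hat x_i$-block of the temporal interaction Hessians (again positive semidefinite). Hence $D_{i,i} \succeq \mu I$, and therefore $\norm{D^{-1}} \le 1/\mu$.

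Second, I would show $\norm{A} \le 2\ell_T$. The off-diagonal blocks $A_{i, i\pm 1}$ come only from cross-time-step second derivatives, and since node costs and spatial interaction costs couple variables only \emph{within} a single time slice, these cross-partials arise exclusively from temporal interaction costs $c_{t-1+j}^v$. Because $c_\tau^v$ is $\ell_T$-smooth in its joint argument and acts on a single agent at a time, $A_{i,i+1}$ is block-diagonal across agents $v \in \mathcal{V}$ with per-agent sub-block of operator norm at most $\ell_T$; hence $\norm{A_{i,i+1}} \le \ell_T$ (and symmetrically for $A_{i,i-1}$). Writing $A = A^{\mathrm{up}} + A^{\mathrm{low}}$ and factoring $A^{\mathrm{up}} = BS$ with $B$ block-diagonal (blocks $A_{i,i+1}$, so $\norm{B} \le \ell_T$) and $S$ a block-shift with $\norm{S} \le 1$, gives $\norm{A^{\mathrm{up}}} \le \ell_T$; an analogous argument gives $\norm{A^{\mathrm{low}}} \le \ell_T$. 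Therefore $\norm{A} \le 2\ell_T$. Combining, $\norm{I - P} = \norm{AD^{-1}} \le \norm{A}\norm{D^{-1}} \le 2\ell_T/\mu < 1$, which establishes convergence of the Neumann series and completes the proof.

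The main obstacle I anticipate is purely the bookkeeping needed to isolate which second-order partials of $h$ can actually appear in the off-diagonal blocks of $A$: one must verify that neither the spatial interaction costs (controlled by $\ell_S$) nor the node costs contribute, so that the norm of $A$ is governed solely by $\ell_T$, independently of $\ell_S$ and the graph $\mathcal{G}$. Once this structural observation is nailed down, the remainder is a routine operator-norm estimate on a block tridiagonal matrix plus the standard Neumann series identity.
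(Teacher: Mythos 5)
Your proof is correct, and it reaches the conclusion by a genuinely different route than the paper. The paper treats $P = I + AD^{-1}$ as a non-symmetric matrix and localizes its \emph{eigenvalues} via a block Gershgorin circle theorem (Tretter's Theorem 1.13.1): the off-diagonal block row sums satisfy $\sum_{j\neq i}\norm{P_{i,j}} \le \norm{A_{i,i-1}}\norm{D^{-1}_{i-1,i-1}} + \norm{A_{i,i+1}}\norm{D^{-1}_{i+1,i+1}} \le 2\ell_T/\mu$, so $\sigma(P) \subset B(1, 2\ell_T/\mu)$, giving $\rho(I-P) < 1$ and hence convergence of the power series. You instead bound the \emph{operator norm} directly: $\norm{I-P} = \norm{AD^{-1}} \le \norm{A}\norm{D^{-1}} \le 2\ell_T\cdot(1/\mu)$, using $D_{i,i}\succeq \mu I$ (node costs are $\mu$-strongly convex, spatial and temporal contributions to the diagonal blocks are PSD) and the shift factorization of the zero-diagonal block-tridiagonal $A$ to get $\norm{A}\le 2\ell_T$. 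Both arguments correctly isolate the key structural fact that only the temporal costs contribute to the off-diagonal time blocks, so the threshold depends on $\ell_T$ alone and not on $\ell_S$ or the graph, and both arrive at the same condition $\mu > 2\ell_T$. Your version is more elementary and self-contained (no block Gershgorin theorem needed), and it delivers the slightly stronger conclusion of norm convergence of the Neumann series; the paper's spectral-radius route is in principle more general (spectral radius can be below $1$ even when the norm is not), but that extra generality is not exploited here since both bounds evaluate to $2\ell_T/\mu$. One small point worth making explicit in your write-up: $\norm{A_{i,i+1}}\le\ell_T$ follows because $A_{i,i+1}$ is an off-diagonal sub-block of the joint Hessian of an $\ell_T$-smooth function, and any sub-block of a symmetric matrix inherits its operator-norm bound.
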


To understand the the power series in \eqref{eq:power-series}, consider the special case where each block $A_{i, j} = \ell_{T}\cdot I$, and $D_{i, i} = Q $. Denote $P - {I}$ as $J$, which is equivalent to $AD^{-1}$. 
Then, we have $J_{i, i} = 0$, $J_{i, i-1} = J_{i, i+1} = \ell_{T}Q^{-1}$, $J_{i, j} = 0 \textup{ when $|i-j| > 1$}.$ Intuitively, $J$ captures the ``correlation over actions'' after one time step.
More generally, for $\ell \ge 0$ and any two time indices $\tau'$, $\tau$, 
$$J^{\ell}_{\tau', \tau} = 
\ell_{T}^{\ell}Q^{-\ell}b(\ell, \tau, \tau'),$$
where $b(\ell, \tau, \tau')$ is a constant depending on $\ell, \tau, \tau'$ and equal to zero if $\ell < |\tau - \tau'|$.

Given that $Q$ is a graph-induced banded matrix, $Q^{-1}$ satisfies exponential-decay properties, which makes it plausible that $Q^{-\ell}$ is an exponential decay matrix with a slower rate. 

For the general case, we need to bound terms similar to $\norm{(D_{i_1, i_1}^{-1} D_{i_2, i_2}^{-1} \cdots D_{i_{\ell}, i_{\ell}}^{-1})_{u, v}}$. This is the goal of Step 3.  

\subsubsection*{Step 3: Properties for general exponential-decay matrices}

The goal of this step is to establish that a product of exponential decay matrices still exhibits exponential decay property under technical conditions about the underlying graph. 

\begin{lemma}\label{thm: l-power}
Given any graph $\mathcal{M} = (\mathcal{V'}, \mathcal{E'})$ and integers $d, \ell \ge 1$, suppose block matrices $A_i \in \bR^{|\mathcal{V'}|d \times |\mathcal{V'}|d}$ all satisfy exponential decay properties, i.e. exists $C_i \ge 0$, and $0 \le \lambda < 1$, s.t.,
$$\norm{(A_i)_{u, q}} \le C_i \lambda^{d_{\mathcal{M}}(u, q)} \quad \textup{for any node $u, q \in \mathcal{M}$}.$$

Select some $\delta > 0$ s.t. $\lambda' = \lambda + \delta < 1$. If $\tilde a \coloneqq \sum_{k = 0}^{\infty}   (\frac{\lambda}{\lambda'})^{k}(\sup_{u \in \mathcal{V'}}|\partial N_u^k|)< \infty$,
then $\prod_{i=1}^{\ell} A_i$ satisfies exponential decay properties with decay rate $\lambda'$, i.e., 
$$\norm{(\prod_{i=1}^{\ell}A_i)_{u, v}} \le C' {(\lambda')}^{d_M(u, v)}.$$
where $C' = (\tilde a)^{\ell}\prod_{i=1}^{\ell} C_{i}$.
\end{lemma}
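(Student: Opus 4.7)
The plan is to prove the lemma by induction on $\ell$, with the key idea being to ``transport'' the decay along the geodesic $u \to w \to v$ using the graph triangle inequality, and then absorb the residual slack into the summability constant $\tilde a$. Observe first that $\tilde a \geq 1$ since the $k=0$ term of its defining series equals $|\partial N_u^0| = 1$ (taking $\partial N_u^0 = \{u\}$ by convention). This handles the base case $\ell = 1$: $\|(A_1)_{u,v}\| \leq C_1 \lambda^{d_{\mathcal{M}}(u,v)} \leq C_1 (\lambda')^{d_{\mathcal{M}}(u,v)} \leq \tilde a\, C_1 (\lambda')^{d_{\mathcal{M}}(u,v)}$.

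For the inductive step, assume the bound holds for $\ell - 1$ and set $B \coloneqq \prod_{i=1}^{\ell-1} A_i$. Expanding the product at block $(u,v)$ gives
\begin{equation*}
\Bigl(\prod_{i=1}^{\ell} A_i\Bigr)_{u,v} \;=\; \sum_{w \in \mathcal{V}'} B_{u,w}\, (A_\ell)_{w,v},
\end{equation*}
so by submultiplicativity of the operator norm together with the inductive hypothesis and the decay assumption on $A_\ell$,
\begin{equation*}
\Bigl\|\Bigl(\prod_{i=1}^{\ell} A_i\Bigr)_{u,v}\Bigr\| \;\leq\; \bigl(\tilde a\bigr)^{\ell-1} \Bigl(\prod_{i=1}^{\ell-1} C_i\Bigr) C_\ell \sum_{w \in \mathcal{V}'} (\lambda')^{d_{\mathcal{M}}(u,w)}\, \lambda^{d_{\mathcal{M}}(w,v)}.
\end{equation*}
The crux is therefore to show $\sum_{w} (\lambda')^{d_{\mathcal{M}}(u,w)} \lambda^{d_{\mathcal{M}}(w,v)} \leq \tilde a \cdot (\lambda')^{d_{\mathcal{M}}(u,v)}$.

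To establish this, I would rewrite $\lambda^{d_{\mathcal{M}}(w,v)} = (\lambda')^{d_{\mathcal{M}}(w,v)} (\lambda/\lambda')^{d_{\mathcal{M}}(w,v)}$ and invoke the graph triangle inequality $d_{\mathcal{M}}(u,w) + d_{\mathcal{M}}(w,v) \geq d_{\mathcal{M}}(u,v)$, which, since $\lambda' < 1$, yields $(\lambda')^{d_{\mathcal{M}}(u,w)+d_{\mathcal{M}}(w,v)} \leq (\lambda')^{d_{\mathcal{M}}(u,v)}$. Grouping $w$ by distance to $v$ via the layer decomposition $\{w : d_{\mathcal{M}}(w,v) = k\} = \partial N_v^k$,
\begin{equation*}
\sum_{w} (\lambda')^{d_{\mathcal{M}}(u,w)} \lambda^{d_{\mathcal{M}}(w,v)} \;\leq\; (\lambda')^{d_{\mathcal{M}}(u,v)} \sum_{k \geq 0} |\partial N_v^k|\, (\lambda/\lambda')^k \;\leq\; \tilde a \cdot (\lambda')^{d_{\mathcal{M}}(u,v)},
\end{equation*}
which closes the induction and gives the stated constant $C' = (\tilde a)^\ell \prod_{i=1}^\ell C_i$.

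I do not expect a genuine obstacle here: the proof is a clean two-line calculation once the factorization $\lambda^{d_{\mathcal{M}}(w,v)} = (\lambda')^{d_{\mathcal{M}}(w,v)} (\lambda/\lambda')^{d_{\mathcal{M}}(w,v)}$ is isolated. The conceptual content is that inflating the decay rate from $\lambda$ to $\lambda'$ must leave enough ``room'' $(\lambda/\lambda')^k$ to tame the boundary growth $|\partial N_v^k|$; the assumption $\tilde a < \infty$ is precisely this geometric condition on $\mathcal{M}$. The only subtlety worth double-checking is that the layered sum $\sum_k |\partial N_v^k| (\lambda/\lambda')^k$ is bounded uniformly in $v$, which is guaranteed by taking $\sup_{u \in \mathcal{V}'} |\partial N_u^k|$ in the definition of $\tilde a$.
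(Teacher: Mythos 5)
Your proof is correct and is essentially the same argument as the paper's: the paper expands the full $\ell$-fold product in one shot and factorizes the nested sum $\sum_{q,s_1,\dots,s_{\ell-1}}(\lambda/\lambda')^{d_{\mathcal{M}}(u,s_1)+\cdots+d_{\mathcal{M}}(s_{\ell-1},q)}$ into $\ell$ copies of $\tilde a$ using exactly the triangle-inequality and layer-decomposition step you isolate, which your induction simply accumulates one factor at a time. The common core is the single-step convolution bound $\sum_{w}(\lambda')^{d_{\mathcal{M}}(u,w)}\lambda^{d_{\mathcal{M}}(w,v)}\le \tilde a\,(\lambda')^{d_{\mathcal{M}}(u,v)}$, so there is no substantive difference.
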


A proof of Lemma \ref{thm: l-power} can be found in the Appendix \ref{pf:l-power}.

\subsubsection*{Step 4: Establish correlation decay properties of matrix $M$}
The last step of the proof is to study the properties of $M$.  To accomplish this, we first show that, for time indices  $i, j \ge 1$, $J^{\ell}$ has the following properties:
\begin{itemize}
    \item $(J^{\ell})_{i, j} = 0$ if $\ell < |i-j|$ or $\ell - |i - j|$ is odd. 
    \item $(J^{\ell})_{i, j}$ is a summation of terms $\prod_{k = 1}^{\ell}A_{j_k, i_k}D_{i_k, i_k}^{-1}$ and the number of such terms is bounded by $\binom{\ell}{(\ell - |i-j|)/2}$.
\end{itemize}
We formally state and prove the above observation in Lemma \ref{le:J-properties}. We can further use Theorem \ref{thm: l-power} on block matrices $A_{j_k, i_k}D_{i_k}^{-1}$, which gives the following lemma. 

\begin{lemma}\label{le:J-bound}
Recall $\gamma_S \coloneqq  \frac{\sqrt{1 + (\Delta \ell_{S}/\mu)} - 1}{\sqrt{1 + (\Delta \ell_{S}/\mu)} + 1}$. Select $\delta > 0$ s.t. $\gamma_S' = \gamma_S + \delta < 1$ and $b \coloneqq \sum_{\gamma \ge 0}(\frac{\gamma_S}{\gamma_S'})^{\gamma} h(\gamma)$.
Given $\ell, i, j \ge 1$ and $u, q \in \mathcal{V}$, we have
$$\norm{((J^{\ell})_{i, j})_{u, q}} \le \binom{\ell}{(\ell - |i-j|)/2}(b\frac{2\ell_{T}}{\mu})^{\ell} (\gamma_S')^{\dist(u, q)}.$$
\end{lemma}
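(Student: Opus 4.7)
The plan is to bound $\|((J^{\ell})_{i,j})_{u,q}\|$ by combining the combinatorial structure of $J^{\ell}$ identified in the (stated earlier) Lemma~\ref{le:J-properties} with the matrix-product exponential decay result of Lemma~\ref{thm: l-power}. Recall that $J = AD^{-1}$, that $A$ is block tri-diagonal (with zero diagonal and the off-diagonal blocks $A_{j,j\pm 1}$ capturing the cross-Hessian of the temporal interaction cost, bounded in norm by $\ell_T$), and that each $D_{i,i}$ is the per-time-step Hessian of $h$, which is $\mu$-strongly convex and graph-banded with bandwidth one and off-diagonal entries bounded by $\ell_S$.

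First, I would invoke Lemma~\ref{le:J-properties} to write
\[
(J^{\ell})_{i,j} \;=\; \sum_{(i_1,j_1),\ldots,(i_{\ell},j_{\ell})} A_{j_1,i_1} D_{i_1,i_1}^{-1} \cdots A_{j_{\ell},i_{\ell}} D_{i_{\ell},i_{\ell}}^{-1},
\]
where each term is a product of $\ell$ block matrices of the form $T_k \coloneqq A_{j_k,i_k} D_{i_k,i_k}^{-1}$, and the number of nonzero summands is at most $\binom{\ell}{(\ell-|i-j|)/2}$ (with the whole sum vanishing when $\ell<|i-j|$ or $\ell-|i-j|$ is odd).

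Next I would establish a single-factor exponential decay estimate: $\|(T_k)_{u,q}\| \le (2\ell_T/\mu)\,\gamma_S^{\dist(u,q)}$ for every $u,q \in \mathcal{V}$. This reduces to showing that $\|(D_{i,i}^{-1})_{u,q}\| \le (2/\mu)\,\gamma_S^{\dist(u,q)}$, which is the standard exponential correlation decay of the inverse of a graph-banded $\mu$-strongly convex matrix with spatial interaction strength $\Delta \ell_S$, obtained via a Neumann-series expansion $D_{i,i}^{-1} = \mu^{-1}(I + \mu^{-1}B)^{-1}$ where $B$ is the graph-banded offset; the standard argument yields exactly the decay rate $\gamma_S = \frac{\sqrt{1 + \Delta\ell_S/\mu}-1}{\sqrt{1 + \Delta\ell_S/\mu}+1}$. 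Multiplying by $A_{j_k,i_k}$ only contributes a factor $\ell_T$ on the left (and does not degrade spatial locality since the blocks of $A$ are block-diagonal over $\mathcal{V}$), yielding the constant $2\ell_T/\mu$ claimed.

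I would then apply Lemma~\ref{thm: l-power} to the product $T_1 T_2 \cdots T_{\ell}$ with base rate $\lambda = \gamma_S$ and perturbed rate $\lambda' = \gamma_S'$, producing the bound
\[
\|(T_1 \cdots T_{\ell})_{u,q}\| \;\le\; b^{\ell}\,(2\ell_T/\mu)^{\ell}\,(\gamma_S')^{\dist(u,q)},
\]
where $b = \sum_{\gamma\ge 0}(\gamma_S/\gamma_S')^{\gamma} h(\gamma)$ is exactly the $\tilde a$ of Lemma~\ref{thm: l-power} specialized to our spatial graph $\mathcal{G}$. Summing this estimate over the at most $\binom{\ell}{(\ell-|i-j|)/2}$ nonzero product terms from Step~1 yields the desired bound. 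The main obstacle is the single-factor decay estimate: one needs to argue that the spatial decay rate of $D_{i,i}^{-1}$ is precisely $\gamma_S$ (not merely some unspecified rate depending on $\Delta\ell_S/\mu$), so that the hypothesis of Lemma~\ref{thm: l-power} is met with the rate promised in the conclusion and the summability constant $b$ matches the statement; everything else is a bookkeeping application of previously stated results.
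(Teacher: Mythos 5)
Your proposal matches the paper's proof essentially step for step: decompose $(J^{\ell})_{i,j}$ via Lemma~\ref{le:J-properties} into at most $\binom{\ell}{(\ell-|i-j|)/2}$ products of factors $B_k = A_{j_k,i_k}D_{i_k,i_k}^{-1}$, bound each factor blockwise using that $A_{j_k,i_k}$ is block-diagonal with $\norm{(A_{j_k,i_k})_{u,u}}\le \ell_T$, apply Lemma~\ref{thm: l-power} with $\lambda=\gamma_S$, $\lambda'=\gamma_S'$, $C_i=2\ell_T/\mu$ to get $(b\,2\ell_T/\mu)^{\ell}(\gamma_S')^{\dist(u,q)}$, and multiply by the count. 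The one imprecision is your justification of the single-factor estimate $\norm{(D_{i,i}^{-1})_{u,q}}\le (2/\mu)\gamma_S^{\dist(u,q)}$ via a Neumann series, which would yield a different (worse) rate; the rate $\gamma_S=\frac{\sqrt{1+\Delta\ell_S/\mu}-1}{\sqrt{1+\Delta\ell_S/\mu}+1}$ is exactly what the paper's Lemma~\ref{thm:graph-induced-band-mat-inverse} (a Demko-type condition-number bound on inverses of graph-banded matrices, applied to $D_{i,i}$) delivers, and that is the lemma the paper invokes at this step.
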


Intuitively speaking, Lemma \ref{le:J-bound} bounds the correlations over actions for node $u$ at time step $t-1+i$ and action for node $q$ at time step $t-1+j$. 
We present its proof in the Appendix \ref{apx:J-bound}.

Recall that, for $1 \le i, j \le k-1$, 
$$M^{-1}_{i, j} = D^{-1}_{i, i}\sum_{\ell \ge 0}(-J)^{\ell}_{i, j}.$$ 
With the exponential decaying bounds on matrix $J^{\ell}$, we can thus conclude Theorem \ref{thm:networked-exp-decay-tight} by following a similar procedure as in the proof of Theorem 3.1 of \cite{lin2021perturbation}. We present the details in Appendix \ref{le:final-step}.

\subsection{From Perturbation to Competitive Ratio}\label{sec:outline-competitive-ratio}

We now show how to use the result proven in the previous section to prove our competitive ratio bounds in \Cref{coro:CR-LPC}.  Our starting point is the assumption that the exponentially decaying local perturbation bound in \Cref{thm:networked-exp-decay:meta} holds for some $C_1, C_2 > 0$ and $\rho_S, \rho_T \in [0, 1)$, which is established using the proof approach outlined in the Section \ref{sec:main:CR}. 


As we discussed in Section \ref{sec:main:CR}, our proof contains two key parts: (i) we bound the per-time-step error of LPC (Lemma \ref{thm:per-step-err}); and (ii) show that the per-time-step error does not accumulate to be unbounded (\Cref{thm:alg-perf-bound-err-inject}). 

A key observation that enables the above analysis approach is that the aggregation of local per-time-step error made by each agent at $x_t^v$ can be viewed as a global per-time-step error in the joint global action $x_t$. Following this observation, we first introduce 
a global perturbation bound that focuses on the global action $x_t$ rather than the local actions $x_t^v$. Recall that $f_t$ denotes the global hitting cost (see Section \ref{sec:setting}). Define the optimization problem that solves the optimal trajectory from global state $x_{t-1}$ to $x_{t+p-1}$
\begin{align}\label{equ:global-opt-problem}
    \tilde{\psi}_t^p(y, z) = \argmin_{x_{t:t+p-1}}& \sum_{\tau = t}^{t+p-1} \left(f_\tau(x_\tau) + c_\tau(x_\tau, x_{\tau-1})\right)\nonumber\\*
    \text{ s.t. }& x_{t-1} = y, x_{t+p-1} = z,
\end{align}
and another one that solves the optimal trajectory from global state $x_{t-1}$ to the end of the game
\begin{align}\label{equ:global-opt-problem-until-end}
    \tilde{\psi}_t(y) = \argmin_{x_{t:T}}& \sum_{\tau = t}^{T} \left(f_\tau(x_\tau) + c_\tau(x_\tau, x_{\tau-1})\right)\nonumber\\*
    \text{ s.t. }& x_{t-1} = y.
\end{align}

The following global perturbation bound can be derived from Theorem 3.1 in \citet{lin2021perturbation}:

\begin{theorem}[Global Perturbation Bound]\label{thm:global-exp-decay}
Under Assumption \ref{assump:costs-and-feasible-sets}, the following perturbation bounds hold for optimization problems \eqref{equ:global-opt-problem} and \eqref{equ:global-opt-problem-until-end}:
\begin{align*}
    \norm{\tilde{\psi}_t^p(y, z)_{t_0} - \tilde{\psi}_t^p(y', z')_{t_0}} \leq{}& C_G \rho_G^{t_0 - t + 1} \norm{y - y'} + C_G \rho_G^{t+p-1-t_0} \norm{z - z'},\\
    \norm{\tilde{\psi}_t(y)_{t_0} - \tilde{\psi}_t(y')_{t_0}} \leq{}& C_G \rho_G^{t_0 - t + 1} \norm{y - y'},
\end{align*}
where $\rho_G = 1 - 2 \cdot \left(\sqrt{1 + \frac{2\ell_{T}}{\mu}} + 1\right)^{-1}$ and $C_G = \frac{2\ell_T}{\mu}$.
\end{theorem}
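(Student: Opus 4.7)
The plan is to reduce both perturbation problems to the single-agent (line-graph-in-time) setting covered by Theorem 3.1 of \citet{lin2021perturbation}, and then invoke that result directly. The key observation is that if we stack the local actions into the global vector $x_t = \{x_t^v\}_{v \in \mathcal{V}} \in \bR^{n|\mathcal{V}|}$, then \eqref{equ:global-opt-problem} and \eqref{equ:global-opt-problem-until-end} become standard finite-horizon trajectory optimization problems with hitting cost $f_t(\cdot)$ and switching cost $c_t(\cdot,\cdot)$, evolving on a purely temporal graph. The spatial structure is completely absorbed into the per-time-step hitting cost.

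First, I would verify that the aggregated costs satisfy the hypotheses of the cited single-agent perturbation bound. For $f_t$: the node-cost contribution $\sum_v f_t^v(x_t^v)$ has block-diagonal Hessian in $x_t$ whose blocks are each at least $\mu I$, so this sum is $\mu$-strongly convex in the stacked vector; adding the convex spatial terms $\sum_e s_t^e$ preserves $\mu$-strong convexity (and keeps $C^2$ regularity). For $c_t$: because $\sum_v c_t^v(x_t^v,x_{t-1}^v)$ is separable across agents, its Hessian in $(x_t,x_{t-1})$ is block-diagonal with blocks of operator norm at most $\ell_T$, so $c_t$ is $\ell_T$-smooth. Feasibility is preserved because $\prod_v D_t^v$ is a convex set defined by $C^2$ constraints. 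Thus, viewed at the global level, the problem satisfies exactly the hypotheses (strong convexity $\mu$, switching-cost smoothness $\ell_T$, convex $C^2$ feasible sets) required by Theorem~3.1 of \citet{lin2021perturbation}.

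With this reduction in hand, I would apply that theorem twice. For \eqref{equ:global-opt-problem}, the ``two boundary'' version of the cited result yields the bound $\norm{\tilde{\psi}_t^p(y,z)_{t_0}-\tilde{\psi}_t^p(y',z')_{t_0}} \le C_G \rho_G^{t_0-t+1}\norm{y-y'} + C_G \rho_G^{t+p-1-t_0}\norm{z-z'}$ with the stated $\rho_G$ and $C_G$, where the two decay terms arise from perturbing the left and right temporal boundaries respectively. For \eqref{equ:global-opt-problem-until-end}, only the left boundary is prescribed, so the ``one boundary'' version of the same theorem (or, equivalently, taking $z$ to be the unconstrained free variable at $t+p-1=T$) gives the one-sided bound. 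The decay rate and prefactor match because the hitting cost is $\mu$-strongly convex and the switching cost is $\ell_T$-smooth, which are the only two parameters that enter $\rho_G$ and $C_G$ in the cited result.

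The one thing to be careful about is that the ``until-end'' formulation \eqref{equ:global-opt-problem-until-end} is well-posed past the original horizon $H$: this is handled by the convention already adopted in \S\ref{sec:look-ahead-and-communication} that $f_t^v(\cdot)=\tfrac{\mu}{2}\norm{\cdot}^2$, $c_t^v\equiv s_t^e\equiv 0$ and $D_t^v=\bR^n$ for $t>H$, which preserves all the structural assumptions at the global level. Beyond this bookkeeping, no new analysis is required; the real work is already done in \citet{lin2021perturbation}, and the only substantive step here is the stacking reduction plus the verification that the aggregated objective inherits $\mu$-strong convexity and $\ell_T$-smoothness with the same constants. I do not expect any serious obstacle, since the spatial costs are purely additive to the hitting cost and do not alter either constant.
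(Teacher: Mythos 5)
Your proposal is correct and takes essentially the same route as the paper: the paper also proves this theorem by stacking the agents' actions into a single global decision variable on a purely temporal line graph (with the spatial costs absorbed into a $\mu$-strongly convex global hitting cost and the separable switching cost remaining $\ell_T$-smooth) and then invoking Theorem~3.1 of \citet{lin2021perturbation}. The only detail the paper makes slightly more explicit is that the constraints $D_t^v$ are handled by the logarithmic-barrier limiting argument of Appendix~\ref{apx:adding-constraints}, which it notes applies to this global bound as well.
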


To make the concept of \textit{per-time-step error} rigorous, we formally define it as the distance between the actual next action picked by LPC and the clairvoyant optimal next action from previous action $x_{t-1}$ to the end of the game:

\begin{definition}[Per-step error magnitude]\label{def:per-step-err-magnitude}
At time step $t$, given the previous state $x_{t-1}$, the (decentralized) online algorithm $ALG$ picks $x_t \in D_{t}$. We define error $e_t$ as
\[e_t := \norm{x_t - \tilde{\psi}_{t} (x_{t-1})_t}.\]
\end{definition}

Using the local perturbation bound in \Cref{thm:networked-exp-decay:meta}, we show the per-time-step error of LPC decays exponentially with respect to prediction length $k$ and communication range $r$. This result is stated formally in Lemma \ref{thm:per-step-err}, and the proof can be found in Appendix \ref{apx:thm:per-step-err}.

\begin{lemma}\label{thm:per-step-err}
For $LPC$ with parameters $r$ and $k$, $e_t$ satisfies
\begin{align*}
    e_t^2 ={}& O\left(h(r)^2 \cdot \rho_S^{2r} + C_3(r)^2 \cdot \rho_T^{2k} \rho_G^{2k}\right) \cdot \norm{x_{t-1} - x_{t-1}^*}^2\nonumber\\
    &+ O\left(h(r)^2 \cdot \rho_S^{2r}\right) \sum_{\tau = t}^{t+k-1} \rho_T^{\tau - t} f_\tau(x_\tau^*)\\
    &+ O\left(C_3(r)^2 \cdot \rho_T^{2k}\right) f_{t+k-1}(x_{t+k-1}^*),
\end{align*}
where $C_3(r) := \sum_{\gamma = 0}^{r} h(\gamma) \cdot \rho_S^\gamma$.
\end{lemma}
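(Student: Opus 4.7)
The plan is to introduce an intermediate ``ideal'' local solution that bridges LPC's action and the clairvoyant optimum. Define $w_\tau^u$ for $(\tau, u) \in N_{(t,v)}^{(k-1,r-1)}$ to be the solution of the same local problem $\psi_{(t,v)}^{(k,r)}$ that LPC solves, with the previous-action parameter set to LPC's actual $\{x_{t-1}^u\}_{u \in N_v^r}$ but the terminal/spatial-boundary parameter set to the clairvoyant values $\{\tilde\psi_t(x_{t-1})_\tau^u\}_{(\tau,u) \in \partial N_{(t,v)}^{(k,r)}}$. The first key claim is that $w_\tau^u$ coincides with $\tilde\psi_t(x_{t-1})_\tau^u$ throughout the interior. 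This follows by matching first-order (KKT) conditions: the terms of the global objective that involve any interior variable $x_\tau^u$ (namely $f_\tau^u$, $c_\tau^u$, $c_{\tau+1}^u$, and the spatial interactions $s_\tau^{(u,u')}$ for neighbors $u'$) are exactly the terms of the partial cost \eqref{eq:partial-costs} that involve $x_\tau^u$, with boundary-adjacent variables taking identical values in the two problems. Strong convexity in $\mu$ then forces the interior solutions to agree. In particular, $w_t^v = \tilde\psi_t(x_{t-1})_t^v$, so $e_t^v = \|x_t^v - w_t^v\|$.

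Next I would apply the local perturbation bound of \Cref{thm:networked-exp-decay:meta} at the interior point $(t_0,v_0)=(t,v)$. Because the two problems share their $\{y_{t-1}^u\}$ parameters, only the $C_1$-term contributes:
\[
e_t^v \le C_1 \sum_{(\tau,u) \in \partial N_{(t,v)}^{(k,r)}} \rho_T^{\tau - t}\rho_S^{\dist(v,u)} \bigl\|\theta_\tau^u - \tilde\psi_t(x_{t-1})_\tau^u\bigr\|.
\]
Each boundary gap is controlled by a triangle inequality through $x_\tau^{*u}$. Strong convexity of $f_\tau^u$ around its minimizer $\theta_\tau^u$ gives $\|\theta_\tau^u - x_\tau^{*u}\|^2 \le (2/\mu) f_\tau^u(x_\tau^{*u}) \le (2/\mu) f_\tau(x_\tau^*)$. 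For the other piece, I use Bellman's principle to note $\tilde\psi_t(x_{t-1}^*) = x_{t:T}^*$ and then apply the global perturbation bound \Cref{thm:global-exp-decay}, giving $\|x_\tau^{*u} - \tilde\psi_t(x_{t-1})_\tau^u\| \le C_G \rho_G^{\tau - t + 1}\|x_{t-1} - x_{t-1}^*\|$.

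The final step is aggregation: I would square $e_t^v$ and sum over $v \in \mathcal{V}$. Split $\partial N_{(t,v)}^{(k,r)}$ into its spatial part (all $\tau \in [t, t+k-1]$ with $u \in \partial N_v^r$, carrying weight $\rho_S^r$) and its temporal part (fixed $\tau = t+k-1$ with $u \in N_v^{r-1}$, carrying weight $\rho_T^{k-1}$), separate via $(A+B)^2 \le 2A^2 + 2B^2$, and apply Cauchy-Schwarz in the form $(\sum_i a_i b_i)^2 \le (\sum_i a_i)(\sum_i a_i b_i^2)$ using the decay weights themselves as $a_i$. The Cauchy weight sum is $O(h(r)\rho_S^r)$ for the spatial part (since $\sum_\tau \rho_T^{\tau - t} = O(1)$ and $|\partial N_v^r| \le h(r)$) and $O(\rho_T^{k-1} C_3(r))$ for the temporal part. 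After swapping $\sum_v \sum_u = \sum_u \sum_v$, the symmetric counts $|\{v : u \in \partial N_v^r\}| \le h(r)$ and the bound $\sum_v \rho_S^{\dist(v,u)}\mathbf{1}[\dist(v,u) \le r-1] \le C_3(r)$ contribute another factor of $h(r)$ and $C_3(r)$ respectively, yielding the $h(r)^2 \rho_S^{2r}$ and $C_3(r)^2 \rho_T^{2k}$ coefficients in the statement. Substituting the boundary-gap bounds, and using $\sum_u \|y^u\|^2 = \|y\|^2$ on $y = x_{t-1} - x_{t-1}^*$ so that the global-perturbation contribution does not grow with $|\mathcal{V}|$, produces the three terms in the claim.

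The hard part will be the aggregation: the Cauchy-Schwarz weights must be selected so that one factor of $h(r)$ (or $C_3(r)$) arises from the Cauchy step and a second from the sum swap, combining cleanly to $h(r)^2$ (or $C_3(r)^2$), while the temporal exponent is tracked through $\rho_T^{\tau - t}$ and $\rho_G^{\tau - t + 1}$ to give the coefficients $\sum_\tau \rho_T^{\tau-t} f_\tau(x_\tau^*)$ and $\rho_G^{2k}\|x_{t-1} - x_{t-1}^*\|^2$. A secondary subtlety is justifying the first-step principle-of-optimality claim: one must confirm that every spatial edge $(u,u')$ incident to an interior vertex $u \in N_v^{r-1}$ lies in $\mathcal{E}((N_v^{r-1})_+)$, so that $s_\tau^{(u,u')}$ genuinely appears in the local cost, and that the boundary variables in the global and local problems coincide at every KKT-active neighbor.
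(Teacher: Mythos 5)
Your proposal is correct and follows essentially the same route as the paper's proof: identify the clairvoyant action $x_{t\mid t-1}^*$ as the solution of the same local problem with boundary parameters set to $\tilde{\psi}_t(x_{t-1})_\tau^u$ (the paper asserts this via the principle of optimality, which you justify by matching first-order conditions), apply the local perturbation bound with only the $C_1$-term active, pass through $x_\tau^*$ via the triangle inequality using \Cref{thm:global-exp-decay} and strong convexity of $f_\tau^u$, and aggregate over $v$ with AM--GM, weighted Cauchy--Schwarz, and the sum swap producing the two factors of $h(r)$ and $C_3(r)$. The accounting of constants and exponents matches the paper's (which actually yields $\rho_T^{2(k-1)}$, absorbed into the $O(\cdot)$).
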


Using the global perturbation bound in \Cref{thm:global-exp-decay}, we show $\sum_{t=1}^T\norm{x_t - x_t^*}^2$ can be upper bounded by the sum of per-time-step errors of LPC in \Cref{thm:alg-perf-bound-err-inject}. The proof can be found in Appendix \ref{apx:thm:alg-perf-bound-err-inject}.

\begin{theorem}\label{thm:alg-perf-bound-err-inject}
Let $x_0, x_1^*, x_2^*, \ldots, x_\horizonlength^*$ denote the offline optimal global trajectory and $x_0, x_1, x_2, \ldots, x_\horizonlength$ denote the trajectory of $ALG$. The trajectory of $ALG$ satisfies that
\[\sum_{t=1}^\horizonlength\norm{x_t - x_t^*}^2 \leq \frac{C_0^2}{(1 - \rho_G)^2} \sum_{t=1}^{\horizonlength} e_t^2,\]
where $C_0 := \max\{1, C_G\}$ and $C_G$ is defined in \Cref{thm:global-exp-decay}.
\end{theorem}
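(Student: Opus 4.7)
The plan is to relate $x_t - x_t^*$ to the sequence of per-step errors $\{e_\tau\}_{\tau \le t}$ via a telescoping argument that repeatedly invokes the global perturbation bound (\Cref{thm:global-exp-decay}). Define the hybrid trajectories
\[
 z_t^{(\tau)} \;:=\; \tilde\psi_\tau(x_{\tau-1})_t \qquad \text{for } 1 \le \tau \le t,
\]
with the boundary convention $z_t^{(t+1)} := x_t$ (i.e.\ the clairvoyant trajectory initialized at $x_{\tau-1}$, evaluated at time $t$). Since $x_{1:T}^*$ is the offline optimum from $x_0$, we have $z_t^{(1)} = x_t^*$. Hence
\[
 x_t^* - x_t \;=\; z_t^{(1)} - z_t^{(t+1)} \;=\; \sum_{\tau=1}^{t} \bigl(z_t^{(\tau)} - z_t^{(\tau+1)}\bigr).
\]

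Next I would bound each telescoped summand. By Bellman's principle of optimality applied to the global optimization problem \eqref{equ:global-opt-problem-until-end}, the trajectory $\tilde\psi_\tau(x_{\tau-1})$ restricted to $[\tau+1,T]$ coincides with $\tilde\psi_{\tau+1}(\tilde\psi_\tau(x_{\tau-1})_\tau)$. Therefore, for $t > \tau$,
\[
 z_t^{(\tau)} - z_t^{(\tau+1)} \;=\; \tilde\psi_{\tau+1}\bigl(\tilde\psi_\tau(x_{\tau-1})_\tau\bigr)_t \;-\; \tilde\psi_{\tau+1}(x_\tau)_t.
\]
Applying \Cref{thm:global-exp-decay} to this perturbation of the initial condition yields
\[
 \bigl\| z_t^{(\tau)} - z_t^{(\tau+1)}\bigr\| \;\le\; C_G\,\rho_G^{\,t-\tau}\,\bigl\| \tilde\psi_\tau(x_{\tau-1})_\tau - x_\tau\bigr\| \;=\; C_G\,\rho_G^{\,t-\tau}\, e_\tau,
\]
where the last equality is \Cref{def:per-step-err-magnitude}. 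For the boundary term $\tau=t$, the identity $\|z_t^{(t)} - z_t^{(t+1)}\| = e_t$ holds directly. Taking $C_0 = \max\{1,C_G\}$ absorbs both cases into the uniform bound $\|z_t^{(\tau)} - z_t^{(\tau+1)}\| \le C_0\,\rho_G^{t-\tau}\, e_\tau$ for all $1 \le \tau \le t$.

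Finally I would combine the summands. The triangle inequality gives
\[
 \|x_t - x_t^*\| \;\le\; C_0\sum_{\tau=1}^{t} \rho_G^{\,t-\tau}\, e_\tau.
\]
Squaring and applying Cauchy--Schwarz to split $\rho_G^{t-\tau} = \rho_G^{(t-\tau)/2}\cdot\rho_G^{(t-\tau)/2}$ gives
\[
 \|x_t - x_t^*\|^2 \;\le\; C_0^2\Bigl(\sum_{\tau=1}^t \rho_G^{\,t-\tau}\Bigr)\Bigl(\sum_{\tau=1}^t \rho_G^{\,t-\tau} e_\tau^2\Bigr) \;\le\; \frac{C_0^2}{1-\rho_G}\sum_{\tau=1}^{t} \rho_G^{\,t-\tau}\, e_\tau^2.
\]
Summing over $t\in[T]$ and swapping the order of summation yields a geometric sum $\sum_{t\ge \tau}\rho_G^{t-\tau} \le 1/(1-\rho_G)$, producing the claimed factor $C_0^2/(1-\rho_G)^2$.

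The only mildly subtle point is the edge case at $\tau = t$, where the perturbation bound's factor $C_G$ could in principle be smaller than $1$; introducing $C_0 = \max\{1, C_G\}$ cleanly handles this. The rest is a standard telescope plus Cauchy--Schwarz and should not present any real obstacle once the hybrid trajectories $z_t^{(\tau)}$ are defined and the principle of optimality is invoked.
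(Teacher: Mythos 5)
Your proposal is correct and follows essentially the same route as the paper's proof: the hybrid trajectories $z_t^{(\tau)}$ are exactly the telescoping terms $\tilde\psi_{t-i}(x_{t-i-1})_t$ the paper sums over, and the remaining steps (principle of optimality, the global perturbation bound from \Cref{thm:global-exp-decay}, Cauchy--Schwarz with the split $\rho_G^{t-\tau}=\rho_G^{(t-\tau)/2}\cdot\rho_G^{(t-\tau)/2}$, and the final exchange of summation) match the paper's argument step for step. No gaps.
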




To understand the bound in \Cref{thm:alg-perf-bound-err-inject}, we can set all per-time-step error $e_t$ to be zero except a single time step $\tau$. We see the impact of $e_\tau$ on the total squared distance $\sum_{t=1}^T \norm{x_t - x_t^*}^2$ is up to some constant factor of $e_\tau$. This is because the impact of $e_\tau$ on $\norm{x_t - x_t^*}$ decays exponentially as $t$ increases from $\tau$ to $T$.

By substituting the per-time-step error bound in \Cref{thm:per-step-err} into \Cref{thm:alg-perf-bound-err-inject}, one can bound $\sum_{t=1}^T \norm{x_t - x_t^*}^2$ by the offline optimal cost, which can be converted to the competitive ratio bound in \Cref{coro:CR-LPC}.

\subsection{Roadmap to Generalize the Proof to Inexact Predictions}\label{sec:roadmap-to-inexact}
In this section, we present a roadmap to generalize our proof to the case where predictions of future cost functions are inexact. In the information availability model in Section \ref{sec:look-ahead-and-communication}, one can study inexact predictions by introducing additional disturbance parameters $\{\delta_t^v, w_t^v, w_t^e\}$ to the 3 types of cost functions and generalize them to $f_t(x_t^v, \delta_t^v), c_t^v(x_t^v, x_{t-1}^v, w_t^v), s_t^{e}(x_t^v, x_t^u, w_t^e)$ for all $t \in [H], v \in \mathcal{V}, e = (v, u) \in \mathcal{E}$. $\{\delta_t^v, w_t^v, w_t^e\}$ represents the disturbances in the cost functions that are hard to predict exactly. Before the decentralized online algorithm decides each local action, it receives the generalized cost functions $f_t^v(\cdot, \cdot), c_t^v(\cdot, \cdot, \cdot), s_t^e(\cdot, \cdot, \cdot)$ and noisy predictions of the true disturbance parameters $\{\delta_t^v, w_t^v, w_t^e\}$ within $k$ time steps and an $r$-hop neighborhood. In the LPC algorithm (Algorithm \ref{alg:LPC}), the optimization problem $\psi_{(t, v)}^{(k, r)}$ can then be solved with the noisy predictions of disturbance parameters. To analyze the performance of LPC in the presence of prediction errors, one can first generalize the exponentially decaying perturbation bounds in \Cref{thm:networked-exp-decay} and \Cref{thm:networked-exp-decay-tight} to include the perturbations on disturbance parameters similar to what we already did in \Cref{prop:exp-decay-networked-SOCO}. The prediction error on disturbance parameters will result in an additional additive term in the per-step error bound in \Cref{thm:per-step-err}. If one is willing to assume that the total sum of prediction errors is $O(cost(OPT))$, as in \citet{antoniadis2020online}, one can derive a competitive ratio for LPC by substituting the per-step error bound into \Cref{thm:alg-perf-bound-err-inject}. It is worth noting that the resulting competitive ratio will inevitably depend on the quality of predictions, and will converge to a limit larger than $1$ (under imperfect predictions) as the prediction horizon $k$ and the communication radius $r$ increase.

\section{Perturbation Bounds}\label{apx:perturbation-bounds}

This section provides the full proofs of the perturbation bounds stated in Section \ref{sec:main:perturb}.

\subsection{Proof of Theorem \ref{thm:networked-exp-decay}}\label{apx:networked-exp-decay}
We begin with a technical lemma. Recall that for any positive integer $m$, $\mathbb{S}^m$ denotes the set of all symmetric $m \times m$ real matrices.

\begin{lemma}\label{thm:graph-induced-band-mat-inverse}
For a graph $\mathcal{G}' = (\mathcal{V}', \mathcal{E}')$, suppose $A$ is a positive definite matrix in $\mathbb{S}^{\sum_{i\in \mathcal{V}'} p_i}$ formed by $\abs{\mathcal{V}'} \times \abs{\mathcal{V}'}$ blocks, where the $(i, j)$-th block has dimension $p_i \times p_j$, i.e., $A_{i,j} \in \mathbb{R}^{p_i\times p_j}$. Assume that $A$ is $q$-banded for an even positive integer $q$; i.e.,
\[A_{i,j} = 0, \forall d_{\mathcal{G}'}(i, j) > q/2.\]
Let $a_0$ denote the smallest eigenvalue value of $A$, and $b_0$ denote the largest eigenvalue value of $A$. Assume that $b_0 \geq a_0 > 0$. Suppose $D = diag(D_1, \ldots, D_{\abs{\mathcal{V}'}})$, where $D_i \in \mathbb{S}^{p_i}$ is positive semi-definite. Let $M = \left((A+D)^{-1}\right)_{S_R,S_C}$, where $S_R, S_C \subseteq \{1, \ldots, \abs{\mathcal{V}'} \}$. 
Then we have $\norm{M} \leq C \gamma^{\hat{d}}$, where
\[C = \frac{2}{a_0}, \gamma = \left(\frac{\sqrt{cond(A)} - 1}{\sqrt{cond(A)} + 1}\right)^{2/q}, \hat{d} = \min_{i \in S_R, j \in S_c} d_{\mathcal{G}'}(i, j).\]
Here $cond(A) = b_0/a_0$ denotes the condition number of matrix $A$.
\end{lemma}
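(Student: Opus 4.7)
The approach is the classical Chebyshev-polynomial-approximation technique (Demko--Moss--Smith) that bounds entries of inverses of banded matrices by best polynomial approximations of $1/x$, adapted here to the graph-banded block setting. First set $B = A + D$: because $D$ is block diagonal it introduces no new off-diagonal blocks, so $B$ is still $q$-banded with respect to $\mathcal{G}'$; and since $D \succeq 0$, we have $B \succeq A \succeq a_0 I$, so $\lambda_{\min}(B) \geq a_0$. The core combinatorial ingredient is that polynomials of a $q$-banded block matrix have ``widening'' sparsity: for any polynomial $p$ of degree $\ell$, the block $(p(B))_{i,j}$ vanishes whenever $d_{\mathcal{G}'}(i,j) > \ell q/2$. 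This follows by expanding $(B^k)_{i,j}$ as a sum over length-$k$ paths in the sparsity graph of $B$ and noting each hop covers graph distance at most $q/2$.

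Next I would invoke the standard Chebyshev-approximation bound for $1/x$: the best degree-$\ell$ polynomial $p_\ell$ on an interval $[a_0, b]$ with condition number $\kappa$ achieves
\[\sup_{x \in [a_0, b]} \abs{\tfrac{1}{x} - p_\ell(x)} \;\leq\; \tfrac{2}{a_0}\left(\tfrac{\sqrt{\kappa} - 1}{\sqrt{\kappa} + 1}\right)^{\ell+1},\]
which, via the symmetric functional calculus, lifts to an operator-norm bound on $B^{-1} - p_\ell(B)$ of the same form. Given $S_R, S_C$ with $\hat{d} = \min_{i\in S_R, j\in S_C} d_{\mathcal{G}'}(i,j)$, I would choose $\ell$ maximally with $\ell q/2 < \hat{d}$. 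The block-sparsity fact then forces $(p_\ell(B))_{S_R, S_C} = 0$, so
\[\norm{M} \;=\; \norm{(B^{-1})_{S_R, S_C}} \;=\; \norm{(B^{-1} - p_\ell(B))_{S_R, S_C}} \;\leq\; \norm{B^{-1} - p_\ell(B)},\]
and substituting the Chebyshev bound yields $\norm{M} \leq (2/a_0)\,\gamma^{\hat{d}}$ with $\gamma = \left((\sqrt{\kappa}-1)/(\sqrt{\kappa}+1)\right)^{2/q}$, matching the claimed shape.

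The main obstacle is that the Chebyshev rate naturally depends on $\kappa = cond(B)$, and although $\lambda_{\min}(B) \geq a_0$, the upper end $\lambda_{\max}(B)$ may exceed $b_0$, so $cond(B) \geq cond(A)$ in general. To recover a bound in terms of $cond(A)$ as stated, the cleanest device is the symmetric factorization
\[(A+D)^{-1} \;=\; A^{-1/2}\,(I + A^{-1/2} D A^{-1/2})^{-1}\,A^{-1/2},\]
whose middle factor $(I+G)^{-1}$, with $G = A^{-1/2} D A^{-1/2} \succeq 0$, has operator norm at most $1$. I would then apply the Chebyshev / polynomial-sparsity argument above to $A^{-1/2}$ (whose spectrum lies in $[b_0^{-1/2}, a_0^{-1/2}]$, so only $cond(A)$ enters the decay rate), show that $G$ and therefore $(I+G)^{-1}$ inherit block-wise exponential decay with the same rate, and combine the three factors via a block-convolution argument of the type made rigorous in \Cref{thm: l-power}. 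This route produces the decay exponent $\gamma$ in terms of $cond(A)$ at the cost of absorbing bookkeeping constants into the prefactor, with $\norm{(I+G)^{-1}} \leq 1$ and $\norm{A^{-1/2}}^2 \leq 1/a_0$ naturally accounting for the stated $C = 2/a_0$.
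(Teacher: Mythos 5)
Your first paragraph sets up the right framework (Demko--Moss--Smith: polynomial sparsity widening plus Chebyshev approximation of $1/x$), and you correctly put your finger on the one real difficulty: the naive argument applied to $B=A+D$ yields a rate governed by $cond(A+D)$, which can be arbitrarily worse than $cond(A)$ since $\norm{D}$ is uncontrolled (and in the paper's application $D$ absorbs the $\ell_f$-dependent part of the Hessian, so this is exactly the dependence the lemma must avoid). The problem is that your proposed repair does not close this gap. In the factorization $(A+D)^{-1}=A^{-1/2}(I+G)^{-1}A^{-1/2}$ with $G=A^{-1/2}DA^{-1/2}$, the middle factor is the inverse of a matrix that is only \emph{exponentially decaying}, not banded, and whose norm grows like $\norm{D}/a_0$; inverse-closedness results for exponentially decaying matrices (Jaffard/Demko-type) do return exponential decay, but with a rate and constant that degrade with $\norm{G}$ --- so the claim that $(I+G)^{-1}$ "inherits the same rate" is unjustified and false in general, and the $cond(A)$-only rate is lost at precisely the point where you needed it. Moreover, even granting that step, chaining three decaying factors through a product lemma like \Cref{thm: l-power} forces you to give up slack in the exponent ($\lambda\to\lambda'>\lambda$) and multiplies the prefactor by neighborhood-sum constants, so you would not recover the exact $C=2/a_0$ and $\gamma$ in the statement.

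The missing idea is to absorb $D$ into a \emph{block-diagonal congruence} rather than into $A^{-1/2}$. Set $P:=D+\tfrac{a_0+b_0}{2}I$, which is block diagonal with $P\succeq\tfrac{a_0+b_0}{2}I$, and write $\tilde H:=P^{-1/2}(A+D)P^{-1/2}=I-P^{-1/2}\bigl(\tfrac{a_0+b_0}{2}I-A\bigr)P^{-1/2}$. Since $\norm{\tfrac{a_0+b_0}{2}I-A}\leq\tfrac{b_0-a_0}{2}$, one gets $\sigma(\tilde H)\subseteq[\tfrac{2a_0}{a_0+b_0},\tfrac{2b_0}{a_0+b_0}]$, so $cond(\tilde H)\leq cond(A)$ with no trace of $\norm{D}$; and because $P^{-1/2}$ is block diagonal, $\tilde H$ has the \emph{same} $q$-banded sparsity pattern, so your polynomial/Chebyshev argument applies verbatim to $\tilde H$ and gives $\norm{(\tilde H^{-1})_{S_R,S_C}}\leq\tfrac{a_0+b_0}{a_0}\gamma^{\hat d}$. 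Undoing the congruence, $\bigl((A+D)^{-1}\bigr)_{S_R,S_C}=(P^{-1/2})_{S_R,S_R}(\tilde H^{-1})_{S_R,S_C}(P^{-1/2})_{S_C,S_C}$, and $\norm{P^{-1/2}}^2\leq\tfrac{2}{a_0+b_0}$ combines with the previous bound to give exactly $\tfrac{2}{a_0}\gamma^{\hat d}$. This is in substance the argument behind Lemma B.1 of \citet{lin2021perturbation} that the paper invokes (together with the observation you already made, that the $m$-th power of a $q$-banded matrix is $qm$-banded); without the block-diagonal preconditioning step your proof cannot produce a rate depending only on $cond(A)$.
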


We can show \Cref{thm:graph-induced-band-mat-inverse} using the same method as Lemma B.1 in \citet{lin2021perturbation}. We only need to note that even when the size of blocks are not identical, the $m$ th power of a $q$-banded matrix is a $qm$-banded matrix for any positive integer $m$.



With the help of \Cref{thm:graph-induced-band-mat-inverse}, we can proceed to show a local perturbation bound on a general $\mathcal{G}'$ in \Cref{prop:exp-decay-networked-SOCO}, where $\mathcal{G}'$ can be different from the network $\mathcal{G}$ of agents in Section \ref{sec:setting}. Compared with Theorem 3.1 in \citet{lin2021perturbation}, \Cref{prop:exp-decay-networked-SOCO} is more general because it considers a general network of decision variables while Theorem 3.1 in \citet{lin2021perturbation} only consider the special case of a line graph. Although \Cref{prop:exp-decay-networked-SOCO} does not consider the temporal dimension which features in the local perturbation bound defined in \Cref{thm:networked-exp-decay:meta}, we will use it to show \Cref{thm:networked-exp-decay} later by redefining the variables from two perspectives. 

\begin{theorem}\label{prop:exp-decay-networked-SOCO}
For a network $\mathcal{G}' = (\mathcal{V}', {\mathcal{E}'})$ with undirected edges, suppose that each node $v \in \mathcal{V}'$ is associated with a decision vector\footnote{We add a hat over the decision vector $\hat{x}_v$ to distinguish it with the local action $x_t^v$ and global action $x_t$ defined in Section \ref{sec:setting}. We assume $\hat{x}_v$ is a $p_v$ dimensional real vector.} $\hat{x}_v \in \mathbb{R}^{p_v}$ and a cost function $\hat{f}_v: \mathbb{R}^{p_v} \to \mathbb{R}_{\geq 0}$, and each edge $e = (u, v) \in {\mathcal{E}'}$ is associated with an edge cost $\hat{c}_e: \mathbb{R}^{p_v} \times \mathbb{R}^{p_u} \times \mathbb{R}^{q} \to \mathbb{R}_{\geq 0}$. Assume that $\hat{f}_v$ is $\mu$-strongly convex for all $v \in \mathcal{V}'$ and $\hat{c}_e$ is $\ell$-smooth for all $e \in {\mathcal{E}'}$. For some subset $S \subset \mathcal{V}'$, define
\begin{align*}
    E_0 :={}& \{(u, v) \in {\mathcal{E}'} \mid u, v \in \mathcal{V}'\setminus S\},\\
    E_1 :={}& \{(u, v) \in {\mathcal{E}'} \mid u \in \mathcal{V}'\setminus S, v \in S\}.
\end{align*}
For the disturbance vectors\footnote{We do not consider the disturbance vectors in the exponentially decaying local perturbation bounds defined in \Cref{thm:networked-exp-decay:meta}, but adding $w$ into the edge costs makes \Cref{prop:exp-decay-networked-SOCO} more general. For each edge $e$, $w_e$ is a $q$-dimensional real vector.} $w \in \mathbb{R}^{(\abs{E_0}+\abs{E_1})\times q}$ indexed by $e \in E_0 \cup E_1$ and $y \in \mathbb{R}^{\sum_{v \in S} p_v}$ indexed by $v \in S$, let $\psi(w, y)$ denote the optimal solution of the optimization problem
\[\psi(w, y) := \argmin_{x \in \mathbb{R}^{\abs{\mathcal{V}'\setminus S}\times d}} \sum_{v \in \mathcal{V}'\setminus S} \hat{f}_v(\hat{x}_v) + \sum_{(u, v) \in E_0} \hat{c}_{(u, v)}(\hat{x}_u, \hat{x}_v, w_{(u, v)}) + \sum_{(u, v) \in E_1} \hat{c}_{(u, v)}(\hat{x}_u, y_v, w_{(u, v)}).\]
Then, we have that for any vertex $u_0 \in \mathcal{V}'\setminus S$, the following inequality holds:
\[\norm{\psi(w, y)_{u_0} - \psi(w', y')_{u_0}} \leq C \left(\sum_{e \in E_0 \cup E_1} \lambda^{d_{\mathcal{G}'}(h, e)-1} \norm{w_e - w_e'} + \sum_{v \in S}\lambda^{d_{\mathcal{G}'}(h, v)-1} \norm{y_v - y_v'} \right), \forall w, y, w', y',\]
where $C \coloneqq {(2\ell)}/{\mu}$ and $\lambda \coloneqq 1 - 2 \cdot \left(\sqrt{1 + (\Delta' \ell/\mu)} + 1\right)^{-1}$. Here, $\Delta'$ denote the maximum degree of any vertex $v \in \mathcal{V}'$ in graph $\mathcal{G}'$. For $e = (u, v) \in {\mathcal{E}'}$, we define $d_\mathcal{G'}(u_0, e) := \min\{d_{\mathcal{G}'}(u_0, u), d_{\mathcal{G}'}(u_0, v)\}$.
\end{theorem}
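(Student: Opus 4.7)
The plan is to combine the implicit function theorem with the block-banded inverse estimate in \Cref{thm:graph-induced-band-mat-inverse}. Let $F(\hat{x};w,y)$ denote the objective being minimized in the definition of $\psi$. Since each $\hat f_v$ is $\mu$-strongly convex and each $\hat c_e$ is convex, $F$ is $\mu$-strongly convex in $\hat{x}$, so $\psi(w,y)$ is the unique critical point and the first-order condition $\nabla_{\hat{x}} F(\psi(w,y);w,y)=0$ holds. Implicitly differentiating gives
\[
\frac{\partial \psi}{\partial w_e} = -H^{-1}\frac{\partial^2 F}{\partial\hat{x}\,\partial w_e}, \qquad \frac{\partial \psi}{\partial y_v} = -H^{-1}\frac{\partial^2 F}{\partial\hat{x}\,\partial y_v},
\]
where $H=\nabla^2_{\hat{x}} F$ evaluated at the optimum. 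The final bound will follow by integrating these derivatives along the straight-line path from $(w',y')$ to $(w,y)$.

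The heart of the argument is to show that $\|(H^{-1})_{u_0,v}\|$ decays exponentially in $d_{\mathcal{G}'}(u_0,v)$. Note that $H$ is indexed by $\mathcal{V}'\setminus S$ and $H_{u,v}=0$ whenever $d_{\mathcal{G}'}(u,v)>1$, since only edges contribute off-diagonal couplings between distinct variables. The plan is to decompose $H=A+D$ with $D_{v,v}=\nabla^2\hat f_v(\hat{x}_v)-\mu I\succeq 0$ block diagonal and $A=\mu I+\sum_{e\in E_0\cup E_1}\nabla^2\hat c_e$ (lifted to the full variable space). Then $A$ is symmetric, $2$-banded in $d_{\mathcal{G}'}$, and $A\succeq \mu I$. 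For the upper eigenvalue, the quadratic-form estimate
\[
x^\top\!\Big(\sum_{e}\nabla^2\hat c_e\Big) x \;\le\; \ell \sum_e (\|x_u\|^2+\|x_v\|^2) \;=\; \ell\sum_v\mathrm{deg}(v)\|x_v\|^2 \;\le\; \Delta'\ell\,\|x\|^2
\]
yields $A\preceq(\mu+\Delta'\ell)I$. Plugging $a_0=\mu$, $b_0=\mu+\Delta'\ell$ and $q=2$ into \Cref{thm:graph-induced-band-mat-inverse} produces $\|(H^{-1})_{u_0,v}\|\le\frac{2}{\mu}\lambda^{d_{\mathcal{G}'}(u_0,v)}$ with $\lambda=(\sqrt{1+\Delta'\ell/\mu}-1)/(\sqrt{1+\Delta'\ell/\mu}+1)$, which after the identity $1-\tfrac{2}{\sqrt{\cdot}+1}=\tfrac{\sqrt{\cdot}-1}{\sqrt{\cdot}+1}$ coincides with the decay factor in the statement.

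The cross-derivatives are then controlled by smoothness: $\|\nabla^2_{\hat{x}_u,w_e}\hat c_e\|$ and $\|\nabla^2_{\hat{x}_u,y_v}\hat c_e\|$ are each at most $\ell$, and $\partial^2 F/\partial\hat{x}\,\partial w_e$ has at most two nonzero blocks (at the endpoints of $e$), while $\partial^2 F/\partial\hat{x}\,\partial y_v$ has at most $\Delta'$ nonzero blocks (one per neighbor of $v$ in $\mathcal{V}'\setminus S$ coupled by $E_1$). Combining these with the $H^{-1}$ estimate gives, for instance,
\[
\Big\|\frac{\partial\psi_{u_0}}{\partial w_e}\Big\| \;\le\; \frac{2\ell}{\mu}\bigl(\lambda^{d_{\mathcal{G}'}(u_0,u)}+\lambda^{d_{\mathcal{G}'}(u_0,v)}\bigr) \;\le\; \frac{4\ell}{\mu}\,\lambda^{d_{\mathcal{G}'}(u_0,e)},
\]
and analogously $\|\partial\psi_{u_0}/\partial y_v\|=O(\Delta'\ell/\mu)\,\lambda^{d_{\mathcal{G}'}(u_0,v)-1}$. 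Applying the fundamental theorem of calculus to $t\mapsto\psi((1-t)w'+tw,(1-t)y'+ty)$ and taking the triangle inequality over all edges and boundary vertices produces the stated bound, after absorbing factors of $\Delta'$ and $\lambda\le 1$ into the leading constant $C$.

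The main obstacle is the Hessian-decomposition step: choosing $A$ and $D$ so that $A$ is simultaneously positive definite, $2$-banded in the graph distance, and has condition number governed only by $\Delta'\ell/\mu$ (rather than, say, by a possibly unbounded smoothness constant of $\hat f_v$). This is essential because the banded-inverse lemma gives a decay rate through $\mathrm{cond}(A)$, and the explicit $\lambda$ in the statement is precisely what comes out of this choice. Once this step is in place the rest is routine. A minor subtlety is that the argument formally requires $\hat f_v,\hat c_e\in C^2$; this is either assumed in the ambient Assumption~\ref{assump:costs-and-feasible-sets}, or can be obtained by a standard mollification-and-limit argument, since the final bound depends only on $\mu$, $\ell$, and $\Delta'$.
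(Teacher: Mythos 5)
Your proposal follows essentially the same route as the paper's proof: the identical Hessian decomposition $H = A + D$ with $A = \mu I + \sum_{e}\nabla^2 \hat{c}_e$ banded, positive definite, and $(1+\Delta'\ell/\mu)$-conditioned, $D$ the positive semi-definite block-diagonal remainder of the node Hessians, followed by the graph-banded inverse decay lemma and integration of the implicitly differentiated first-order condition. The only immaterial difference is that you bound the cross-Hessian contributions block by block rather than as a single submatrix, which inflates the leading constant by a factor of at most $\Delta'$ but leaves the decay rate $\lambda$ unchanged.
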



\begin{proof}[Proof of Theorem \ref{prop:exp-decay-networked-SOCO}]
Let $e = [\pi^\top, \epsilon^\top]^\top$ be a vector where $\epsilon = \{\epsilon_v\}_{v \in S}$ for $\epsilon_v \in \mathbb{R}^{p_v}$ and
$\pi = \{\pi_e\}_{e \in E_0 \cup E_1},$
for $\pi_e \in \mathbb{R}^{q}$.
Let $\theta$ be an arbitrary real number. Define function $\hat{h}: \mathbb{R}^{\sum_{v \in \mathcal{V}'\setminus S} p_v} \times \mathbb{R}^{(\abs{E_0} + \abs{E_1}) \times q} \times \mathbb{R}^{\sum_{v \in S} p_v} \to \mathbb{R}_{\geq 0}$ as
\[\hat{h}(\hat{x}, w, y) = \sum_{v \in \mathcal{V}'\setminus S} \hat{f}_v(\hat{x}_v) + \sum_{(u, v) \in E_0} \hat{c}_{(u, v)}(\hat{x}_u, \hat{x}_v, w_{(u, v)}) + \sum_{(u, v) \in E_1} \hat{c}_{(u, v)}(\hat{x}_u, y_v, w_{(u, v)}).\]
To simplify the notation, we use $\zeta$ to denote the tuple of system parameters, i.e.,
\[\zeta := (w, y).\]

From our construction, we know that $\hat{h}$ is $\mu$-strongly convex in $x$, so we use the decomposition $\hat{h} = \hat{h}_a + \hat{h}_b$, where
\begin{align*}
    \hat{h}_a(\hat{x}, \zeta) &= \sum_{v \in \mathcal{V}'\setminus S} \frac{\mu}{2}\norm{\hat{x}_v}^2 + \sum_{(u, v) \in E_0} \hat{c}_{(u, v)}(\hat{x}_u, \hat{x}_v, w_{(u, v)}) + \sum_{(u, v) \in E_1} \hat{c}_{(u, v)}(\hat{x}_u, y_v, w_{(u, v)}),\\
    \hat{h}_b(\hat{x}, \zeta) &= \sum_{v \in \mathcal{V}'\setminus S} \left(\hat{f}_v(\hat{x}_v) - \frac{\mu}{2}\norm{\hat{x}_v}^2\right).
\end{align*}

Since $\psi(\zeta + \theta e)$ is the minimizer of convex function $\hat{h}(\cdot, \zeta + \theta e)$, we see that
\[\nabla_{\hat{x}} \hat{h}(\psi(\zeta + \theta e), \zeta + \theta e) = 0.\]
Taking the derivative with respect to $\theta$ gives that
\begin{align*}
    \nabla_{\hat{x}}^2 \hat{h}(\psi(\zeta + \theta e), \zeta + \theta e) \frac{d}{d\theta}\psi(\zeta + \theta e) ={}& - \sum_{v \in S} \nabla_{y_v} \nabla_{\hat{x}} \hat{h}(\psi(\zeta + \theta e), \zeta + \theta e) \epsilon_v\\
    &- \sum_{e \in E_1 \cup E_2}\nabla_{w_e} \nabla_{\hat{x}} \hat{h}(\psi(\zeta + \theta e), \zeta + \theta e) \pi_e.
\end{align*}
To simplify the notation, we define
\begin{align*}
    M &:= \nabla_{\hat{x}}^2 \hat{h}(\psi(\zeta + \theta e), \zeta + \theta e), \text{which is a }\abs{\mathcal{V}'\setminus S}\times \abs{\mathcal{V}'\setminus S} \text{ block matrix},\\
    R^{(v)} &:= - \nabla_{y_v} \nabla_{\hat{x}} \hat{h}(\psi(\zeta + \theta e), \zeta + \theta e), \forall v \in S, \text{which are }\abs{\mathcal{V}'\setminus S}\times 1 \text{ block matrix},\\
    K^{(e)} &:= - \nabla_{w_e} \nabla_{\hat{x}} \hat{h}(\psi(\zeta + \theta e), \zeta + \theta e), \forall e \in E_0 \cup E_1, \text{which are }\abs{\mathcal{V}'\setminus S} \times 1 \text{ block matrices},
\end{align*}
where in $M$, the block size is $p_u \times p_v, \forall (u, v) \in (\mathcal{V}'\setminus S)^2$; in $R^{(v)}$, the block size is $p_u \times p_v, \forall u \in \mathcal{V}'\setminus S$; in $K^{(e)}$, the block size is $p_u\times q, \forall u \in \mathcal{V}'\setminus S$. Hence we can write
\[\frac{d}{d\theta}\psi(\zeta + \theta e) = M^{-1} \left(\sum_{v \in S} R^{(v)} \epsilon_v + \sum_{e \in E_1 \cup E_2} K^{(e)} \pi_e\right).\]
Recall that $\{R^{(v)}\}_{v \in S}$ are $\abs{\mathcal{V}'\setminus S} \times 1$ block matrices with block size $p_u \times p_v, \forall u \in \mathcal{V}'\setminus S$; $\{K^{(e)}\}_{e \in E_0 \cup E_1}$ are $\abs{\mathcal{V}'\setminus S} \times 1$ block matrices with block size $p_u\times q, \forall u \in \mathcal{V}'\setminus S$. Let $N(v)$ denote the set of neighbors of vertex $v$ on $\mathcal{G}'$. For $R^{(v)}, v \in S$, the $(u, 1)$-th block can be non-zero only if $u \in (\mathcal{V}'\setminus S) \cap N(v)$. For $K^{(e)}, e \in E_0 \cup E_1$, the $(u, 1)$-th block can be non-zero only if $u \in e$ and $u \in \mathcal{V}'\setminus S$. Hence we see that
\begin{align*}
    \frac{d}{d\theta}\psi(\zeta + \theta e)_{u_0} = \sum_{v \in S} (M^{-1})_{u_0, (\mathcal{V}'\setminus S) \cap N(v)} R^{(v)}_{(\mathcal{V}'\setminus S) \cap N(v), 1} \epsilon_v
    + \sum_{e \in E_0 \cup E_1} (M^{-1})_{u_0, \{u \in e\mid u \in \mathcal{V}'\setminus S\}} K^{(\tau)}_{\{u \in e\mid u \in \mathcal{V}'\setminus S\}, 1} \pi_e.
\end{align*}
Since the switching costs $c_\tau(\cdot, \cdot, \cdot), \tau = 1, \ldots, p$ are $\ell$-strongly smooth, we know that the norms of
\[R^{(v)}_{(\mathcal{V}'\setminus S) \cap N(v), 1}, \text{ and } K^{(\tau)}_{\{u \in e\mid u \in \mathcal{V}'\setminus S\}, 1}\]
are all upper bounded by $\ell$.
Taking norms on both sides gives that
\begin{align}\label{equ:thm:SOCO-sensitivity:e1}
    \norm{\frac{d}{d\theta}\psi(\zeta + \theta e)_{u_0}} \leq{}& \sum_{v \in S} \ell\norm{(M^{-1})_{u_0, (\mathcal{V}'\setminus S) \cap N(v)}} \norm{\epsilon_v} + \sum_{e \in E_0 \cup E_1} \ell\norm{(M^{-1})_{u_0, \{u \in e\mid u \in \mathcal{V}'\setminus S\}}} \norm{\pi_e}.
\end{align}
Note that $M$ can be decomposed as $M = M_a + M_b$, where
\begin{align*}
    M_a &:= \nabla_{\hat{x}}^2 \hat{h}_a(\psi(\zeta + \theta e), \zeta + \theta e),\\
    M_b &:= \nabla_{\hat{x}}^2 \hat{h}_b(\psi(\zeta + \theta e), \zeta + \theta e).
\end{align*}
Since $M_a$ is block tri-diagonal and satisfies $(\mu + \Delta' \ell) I \succeq M_a \succeq \mu I$ 
, and $M_b$ is block diagonal and satisfies $M_b \succeq 0$, we obtain the following using Lemma \ref{thm:graph-induced-band-mat-inverse}:
\[\norm{(M^{-1})_{u_0, (\mathcal{V}'\setminus S) \cap N(v)}} \leq \frac{2}{\mu} \lambda^{d_{\mathcal{G}'}(u_0, v)-1}, \text{ and } \norm{(M^{-1})_{u_0, \{u \in e\mid u \in \mathcal{V}'\setminus S\}}} \leq \frac{2}{\mu} \lambda^{d_{\mathcal{G}'}(u_0, e) - 1},\]
where $\lambda :={(\sqrt{cond(M_a)} - 1)}/{(\sqrt{cond(M_a)} + 1)} = 1 - 2 \cdot \left(\sqrt{1 + (2\ell/\mu)} + 1\right)^{-1}$.

Substituting this into \eqref{equ:thm:SOCO-sensitivity:e1}, we see that
\[\norm{\frac{d}{d\theta}\psi(\zeta + \theta e)_{u_0}} \leq C\left(\sum_{v \in S} \lambda^{d_{\mathcal{G}'}(u_0, v)-1} \norm{\epsilon_v} + \sum_{e \in E_0 \cup E_1} \lambda^{d_{\mathcal{G}'}(u_0, e) - 1}\norm{\pi_e}\right),\]
where $C = (2\ell)/\mu$.

Finally, by integration we can complete the proof
\begin{align*}
    \norm{\psi(\zeta)_{u_0} - \psi(\zeta + e)_{u_0}} ={}& \norm{\int_0^1 \frac{d}{d\theta}\psi(\zeta + \theta e)_{u_0} d\theta}\\
    \leq{}& \int_0^1 \norm{\frac{d}{d\theta}\psi(\zeta + \theta e)_{u_0}} d\theta\\
    \leq{}& C\left(\sum_{v \in S} \lambda^{d_{\mathcal{G}'}(u_0, v)-1} \norm{\epsilon_v} + \sum_{e \in E_0 \cup E_1} \lambda^{d_{\mathcal{G}'}(u_0, e) - 1}\norm{\pi_e}\right).
\end{align*}
\end{proof}

Now we return to the proof of Theorem \ref{thm:networked-exp-decay}. For simplicity, we temporarily assume the individual decision points are unconstrained, i.e., $D_t^v = \mathbb{R}^n$. We discuss how to relax this assumption in Appendix \ref{apx:adding-constraints}.

We first consider the case when $\left(\{y_{t-1}^u\}, \{z_\tau^u\}\right)$ and $\left(\{y_{t-1}^u\}, \{z_\tau^u\}\right)$ only differ at one entry $y_{t-1}^u$ or $z_\tau^u$. If the difference is at $z_{\tau}^u$, by viewing each subset $\{\tau\} \times N_v^r$   for $\tau \in \{t-1, t, \dots,  t+k\}$ in the original problem as a vertex in the new graph $\mathcal{G}'$ and applying Theorem \ref{prop:exp-decay-networked-SOCO}, we obtain that
\begin{align}\label{thm:networked-exp-decay:e1}
    \norm{x_{t_0}^{v_0} - (x_{t_0}^{v_0})'} \leq C_1^0 \cdot (\rho_T^0)^{\abs{t_0 - \tau}}\norm{z_\tau^u - (z_\tau^u)'},
\end{align}
where $C_1^0 = (2\ell_T)/\mu$ and $\rho_T^0 = 1 - 2\cdot \left(\sqrt{1 + (2\ell_T/\mu)} + 1\right)^{-1}$. On the other hand, by viewing each subset $\{\tau \mid t-1 \leq \tau < t+k\} \times \{u\}$ for $u \in N_v^r$ in the original problem as a vertex in the new graph $\mathcal{G}'$ and applying Theorem \ref{prop:exp-decay-networked-SOCO}, we obtain that
\begin{align}\label{thm:networked-exp-decay:e2}
    \norm{x_{t_0}^{v_0} - (x_{t_0}^{v_0})'} \leq C_1^1 \cdot (\rho_S^0)^{\dist(u, v_0)}\norm{z_\tau^u - (z_\tau^u)'},
\end{align}
where $C_1^1 = (2\Delta \ell_S)/\mu$ and $\rho_S^0 = 1 - 2 \cdot \left(\sqrt{1 + (2 \Delta \ell_S/\mu)}\right)^{-1}$. Combining \eqref{thm:networked-exp-decay:e1} and \eqref{thm:networked-exp-decay:e2} gives that
\begin{align}\label{thm:networked-exp-decay:e3}
    \norm{x_{t_0}^{v_0} - (x_{t_0}^{v_0})'} \leq{}& \min \{C_1^0 \cdot (\rho_T^0)^{\abs{t_0 - \tau}}, C_1^1 \cdot (\rho_S^0)^{\dist(u, v_0)}\}\cdot \norm{z_\tau^u - (z_\tau^u)'}\nonumber\\
    \leq{}& \sqrt{C_1^0 \cdot C_1^1} \cdot (\rho_T^0)^{\abs{t_0 - \tau}/2} \cdot (\rho_S^0)^{\dist(u, v_0)/2} \cdot \norm{z_\tau^u - (z_\tau^u)'}\nonumber\\
    \leq{}& C_1 \cdot \rho_T^{|t_0-\tau|} \rho_S^{\dist(v_0, u)} \norm{z_\tau^u - (z_\tau^u)'}
\end{align}
when $\left(\{y_{t-1}^u\}, \{z_\tau^u\}\right)$ and $\left(\{y_{t-1}^u\}, \{z_\tau^u\}\right)$ only differ at one entry $z_{\tau}^u$ for $(\tau, u) \in \partial N_{(t, v)}^{(k, r)}$.

We can use the same method to show that when $\left(\{y_{t-1}^u\}, \{z_\tau^u\}\right)$ and $\left(\{y_{t-1}^u\}, \{z_\tau^u\}\right)$ only differ at one entry $y_{t-1}^u$ for $u \in N_v^r$, we have
\begin{align}\label{thm:networked-exp-decay:e4}
    \norm{x_{t_0}^{v_0} - (x_{t_0}^{v_0})'} \leq{} C_2 \rho_T^{t_0 - (t-1)}\rho_S^{\dist(v_0, u)} \norm{y_{t-1}^u - (y_{t-1}^u)'}.
\end{align}

In the general case where $\left(\{y_{t-1}^u\}, \{z_\tau^u\}\right)$ and $\left(\{y_{t-1}^u\}, \{z_\tau^u\}\right)$ differ not only at one entry, we can perturb the entries of parameters one at a time and apply the triangle inequality. Then, the conclusion of \Cref{thm:networked-exp-decay} follows from \eqref{thm:networked-exp-decay:e3} and \eqref{thm:networked-exp-decay:e4}.

\subsection{Proof of Theorem \ref{thm:networked-exp-decay-tight}}\label{apx:networked-exp-decay-tight}
The proof follows a four step structure outlined in \Cref{sec:outline-network-perturbation-bound}.

\subsubsection*{Step 1. Establish first order equations}\label{step1}
Given any system parameter $\zeta = (x_{t-1}^{(N_v^r)},  \{z_{\tau}^u | (\tau, u) \in \partial N_{(t, v)}^{(k, r)}\})$, we can define function $\hat{h}$ as follows:
$$\hat{h}(\hat x_{[k-1]}, \zeta) = \sum_{i = 1}^{k-1}\sum_{u \in N_v^{r-1}} f^u_{t-1+i}(\hat x_{i}^u) + \sum_{i = 1}^{k-1}\sum_{(u, u') \in \mathcal{E}(N_v^r)} s_{t-1+i}^{(u, u')}(\hat x_{i}^u, \hat x_{i}^{u'}) + \sum_{i = 1}^k \sum_{u \in N_v^{r}} c_{t-1+i}^u(\hat x_{i}^u, \hat x_{i-1}^u).$$
$\hat x_0$ coincides with $x_{t-1}$ on every node in $N_v^r.$ 
$\hat x_k$ coincides with $z_{t-1+k}$ on every node in $N_v^r.$
For $1 \le i \le k-1$, 
$\hat x_i^u$ coincides with 
$z_{t-1+i}^u$ on the boundary, i.e., $u \in \partial N_{ v}^{r}.$ 

Let perturbation vector $e = [e_0^T, e_1^T, \cdots, e_{k-1}^T, e_k^T]^T$ where $e_0, e_k \in \bR^{|N_v^r|\times n}$ and $e_i = \bR^{|\partial N_v^r|\times n}$ for $1 \le i \le k-1$.

Given $\theta \in \mathbb{R}$, $\psi(\zeta + \theta e)$ is the global minimizer of convex function $\hat{h}(\cdot, \zeta + \theta e)$, and hence we have 
$$\nabla_{\hat x_{1:k-1}^{(N_v^{r-1})}} \hat{h}(\psi(\zeta + \theta e), \zeta + \theta e) = 0.$$ 
Taking the derivative with respect to $\theta$, we establish the following set of equations: 
\begin{align}\label{eq:derivatives-theta}
\begin{split}
\nabla_{\hat x_{1:k-1}^{(N_v^{r-1})}}^2 \hat{h}(\psi(\zeta + \theta e), \zeta + \theta e) \frac{d}{d\theta} \psi(\zeta + \theta e) &= -\nabla_{\hat x_0^{(N_v^r)}}\nabla_{\hat x_{1:k-1}^{(N_v^{r-1})}} \hat{h}(\psi(\zeta + \theta e), \zeta + \theta e)e_0 \\
& -\nabla_{\hat x_k^{(N_v^r)}}\nabla_{\hat x_{1:k-1}^{(N_v^{r-1})}} \hat{h}(\psi(\zeta + \theta e), \zeta + \theta e)e_k \\
& -\sum_{\tau = 1}^{k-1} \nabla_{\hat x_{\tau}^{(\partial N_v^r)}}\nabla_{\hat x_{1:k-1}^{(N_v^{r-1})}} \hat{h}(\psi(\zeta + \theta e), \zeta + \theta e)e_{\tau}.
\end{split}
\end{align}

\noindent We adopt the following short-hand notation:
\begin{itemize}
    \item $M \coloneqq \nabla_{\hat x_{1:k-1}^{(N_v^{r-1})}}^2 \hat{h}(\psi(\zeta + \theta e), \zeta + \theta e)$, which is a hierarchical block matrix with the first level of dimension $(k-1) \times (k-1)$, the second level of dimension $|N_v^{r-1}| \times |N_v^{r-1}|$ and the third level of dimension $n \times n$.
    \item $R^{(1)} \coloneqq -\nabla_{\hat x_0^{(N_v^r)}}\nabla_{\hat x_{1:k-1}^{(N_v^{r-1})}} \hat{h}(\psi(\zeta + \theta e), \zeta + \theta e)$, which is  also a hierarchical block matrix with the first level of dimension $(k-1) \times 1$, the second level of dimension $|N_v^{r-1}| \times |N_v^{r}|$ and the third level of dimension $n \times n$. 
    \item $R^{(k-1)} \coloneqq -\nabla_{\hat x_k^{(N_v^r)}}\nabla_{\hat x_{1:k-1}^{(N_v^{r-1})}} \hat{h}(\psi(\zeta + \theta e), \zeta + \theta e)$, which is also a hierarchical block matrix with the first level of dimension $(k-1) \times 1$, the second level of dimension $|N_v^{r-1}| \times |N_v^{r}|$ and the third level of dimension $n \times n$. 
    \item $K^{(\tau)} \coloneqq -\nabla_{\hat x_{\tau}^{(\partial N_v^r)}}\nabla_{\hat x_{1:k-1}^{(N_v^{r-1})}} \hat{h}(\psi(\zeta + \theta e), \zeta + \theta e)$, which is also a hierarchical block matrix with the first level of dimension $(k-1) \times 1$. the second level of dimension $|N_v^{r-1}| \times |\partial N_v^r|$ and the third level of dimension $n \times n$. 
\end{itemize}
Using the above, we can rewrite \eqref{eq:derivatives-theta} as follows:
$$\frac{d}{d\theta} \psi(\zeta + \theta e) = M^{-1} \Bigg(R^{(1)}e_0 + R^{(k-1)}e_k + \sum_{\tau = 1}^{k-1} K^{(\tau)} y_{\tau}\Bigg).$$
Due to the structure of temporal interaction cost functions, for $R^{(1)}$(resp. $R^{(k-1)}$), only when the first level index is $1$ (resp. $k-1$), the lower level block matrix is non-zero; due to the structure of spatial interaction cost functions, for $K^{(\tau)}$, only when the first level index is $\tau$, the lower level block matrix is non-zero. Hence, for $1 \le \tau' \le k-1$, we have
\begin{align}\label{eq:time-index}
\begin{split}
    (\frac{d}{d\theta} \psi(\zeta + \theta e))_{\tau'} &= M^{-1}_{\tau', 1} R^{(1)}_1e_0 + M^{-1}_{\tau', k-1}R^{(k-1)}_{k-1}e_k + \sum_{\tau = 1}^{k-1} M^{-1}_{\tau', \tau}K^{(\tau)}_{\tau} y_{\tau},
\end{split}    
\end{align}
where the subscripts on the right hand side denote the first level index of hierarchical block matrices $M$, $R^{(1)}$, $R^{(k-1)}$ and $K^{(\tau)}$.

\subsubsection*{Step 2. Exploit the structure of matrix $M$}
We decompose $M$ to block diagonal matrix $D$ and tri-diagonal block matrix $A$ such that $M = D + A$. We denote each diagonal block in $D$ as $D_{i,i}$ for $1 \le i \le k-1$. Other blocks in $D$ are zero matrices.
\begin{equation*}
D \coloneqq \begin{bmatrix}
        \begin{matrix}
        * & 0 & \cdots & *\\
        0 & * & & 0\\
        \vdots & & \ddots & \\
        * & 0 & \cdots & * 
        \end{matrix}& & & \\
        &\begin{matrix}
        * & 0 & \cdots & *\\
        0 & * & & 0\\
        \vdots & & \ddots & \\
        * & 0 & \cdots & * 
        \end{matrix}\\
        & & \ddots& \\
        & & &\begin{matrix}
        * & 0 & \cdots & *\\
        0 & * & & 0\\
        \vdots & & \ddots & \\
        * & 0 & \cdots & * 
        \end{matrix}
        \end{bmatrix}
\end{equation*}
Each non-zero block in $A$ is a diagonal block matrix, which captures the Hessian of temporal interaction cost between consecutive time steps. Denote each block as $A_{i,j}$ for $1 \le i, j \le k-1$. 
\begin{equation*}
A \coloneqq \begin{bmatrix}
         &\begin{matrix}
        * & 0 & \cdots & 0\\
        0 & * & & 0\\
        \vdots & & \ddots & \\
        0 & 0 & \cdots & *
        \end{matrix} & & \\
        \begin{matrix}
        * & 0 & \cdots & 0\\
        0 & * & & 0\\
        \vdots & & \ddots & \\
        0 & 0 & \cdots & *
        \end{matrix}
        & &\begin{matrix}
        * & 0 & \cdots & 0\\
        0 & * & & 0\\
        \vdots & & \ddots & \\
        0 & 0 & \cdots & *
        \end{matrix}\\
        & &\ddots &\begin{matrix}
        * & 0 & \cdots & 0\\
        0 & * & & 0\\
        \vdots & & \ddots & \\
        0 & 0 & \cdots & *
        \end{matrix} \\
        & & \begin{matrix}
        * & 0 & \cdots & 0\\
        0 & * & & 0\\
        \vdots & & \ddots & \\
        0 & 0 & \cdots & *
        \end{matrix}
        &
        \end{bmatrix}
\end{equation*}

We rewrite the inverse of $M$ as follows:
$$M^{-1} = (D + A)^{-1} = D^{-1} (I + AD^{-1})^{-1} = D^{-1}P^{-1}.$$




Next we show the proof for Lemma \ref{le: H-inverse}.
\begin{proof}[Proof of Lemma \ref{le: H-inverse}]
We claim the eigenvalues of $P$ are in $\{\lambda \in \mathbb{C} | |\lambda - z| \le R \}$ for some $R \in \bR_{>0}$ and $z \in \mathbb{C}\setminus \{0\}$ such that $R < |z|.$ We first establish Lemma \ref{le: H-inverse} based on the claim and then prove the claim. 

We follow the argument as in the proof of Thm 4 in \citet{2020}. Since any eigenvalue $\lambda$ of $P$ satisfies $|\lambda - z| \le R$, $|\lambda/z - 1| \le R/|z| < 1.$ Thus, the eigenvalues of $I - (1/z)P$ lie on $\{\tilde \lambda \in \mathbb{C}: |\tilde \lambda| \le R/|z|\}$, which guarantees $\rho(I - (1/z)P) < 1.$ Therefore,
$$P^{-1} = \frac{1}{z}\Bigg(I - (I - \frac{1}{z}P)\Bigg)^{-1} = \frac{1}{z}\sum_{q \ge 0}(I - \frac{1}{z}P)^q.$$

We let $z = 1$ and $R = \frac{2\ell_T}{\mu}$ and prove the above claim by utilizing Gershgorin circle theorem for block matrices.

By Theorem 1.13.1 and Remark 1.13.2 of \citet{Tretter2008}, the following holds: Consider $\mathcal{A} = (A_{ij}) \in \mathbb{R}^{dn \times dn}$ ($d, n \ge 1$) where $A_{ij} \in \mathbb{R}^{d\times d}$ and $A_{ii}$ is symmetric. Suppose $\sigma(\cdot)$ is the spectrum of a matrix. Define set
$$G_i \coloneqq \sigma(A_{ii}) \cup \Bigg\{\cup_{k=1}^d B\Bigg(\lambda_k(A_{ii}), \sum_{j\neq i}\norm{A_{ij}}\Bigg)\Bigg\}$$
where $B(\cdot, \cdot)$ denotes a disk
$B(c, r) = \{\lambda: \norm{\lambda - c} \le r\}$ and $\lambda_k$ is the $k$-th smallest eigenvalues of $A_{ii}$.
Then,
$$\sigma(\mathcal{A}) \in \cup_{i=1}^n G_i.$$

Next, we use the above fact to find a superset of $\sigma(P)$. 
Every diagonal block of $P$ is ${I}$. Moreover, $P_{i, j} = 0$ for $|i - j| > 1$, $P_{i, i-1} = A_{i, i-1}D_{i-1, i-1}^{-1}$, $P_{i, i+1} = A_{i, i+1}D_{i+1, i+1}^{-1}$. Hence we have
\begin{align*}
\sum_{j \neq i} \norm{P_{i, j}} &\le \norm{A_{i, i-1}}\norm{D_{i-1, i-1}^{-1}} + \norm{A_{i, i+1}}\norm{D_{i+1, i+1}^{-1}} \\
&\le \frac{2\ell_T}{\mu}.
\end{align*}
The last inequality is by Assumptions \ref{assump:costs-and-feasible-sets}. Therefore, 
$G_i = B(1, \frac{2\ell_T}{\mu}).$
This implies all eigenvalues of $P$ are in $B(1, \frac{2\ell_T}{\mu}).$
\end{proof}

To further simplify the notation in the power series expansion, we define $J \coloneqq AD^{-1} = P - I$. Given any time indices $\tau'$ and $\tau$, we have 
\begin{align}\label{ineq:M-inverse}
\begin{split}
(M^{-1})_{\tau', \tau}
&= (D^{-1})_{\tau', \tau'}(P^{-1})_{\tau', \tau}  \\
&= (D^{-1})_{\tau', \tau'} \times \sum_{\ell\ge 0} (-J)^{\ell}_{\tau', \tau},
\end{split}
\end{align}
where the first equality is 
since $D^{-1}$ is a diagonal block matrix,
the second equality is due to Lemma \ref{le: H-inverse}.



\subsubsection*{Step 3: Property for general exponential-decay matrices}

This step simply requires proving \Cref{thm: l-power}.

\begin{proof}[Proof of Lemma \ref{thm: l-power}]\label{pf:l-power}
Under the assumptions, we see that
\begin{align}
\begin{split}
\sum_{q} (\frac{1}{\lambda'})^{d_{\mathcal{M}}(u, q)} \norm{(A_1A_2\cdots A_{\ell})_{u, q}}
&= \sum_{q} (\frac{1}{\lambda'})^{d_{\mathcal{M}}(u, q)} \norm{\sum_{s_1, \cdots, s_{\ell-1}} (A_1)_{u, s_1} (A_2)_{s_1, s_2} \cdots (A_{\ell})_{s_{\ell-1}, q}}\\
&\le \sum_{q} (\frac{1}{\lambda'})^{d_{\mathcal{M}}(u, q)} \sum_{s_1, \cdots, s_{\ell-1}} (C_1\lambda^{d_{\mathcal{M}}(u, s_1)}) (C_2\lambda^{d_{\mathcal{M}}(s_1, s_2)}) \cdots (C_{\ell}\lambda^{d_{\mathcal{M}}(s_{\ell-1}, q)})\\
&\le \sum_{q}\sum_{s_1, \cdots, s_{\ell-1}} \prod_{i=1}^{\ell} C_{i}(\frac{\lambda}{\lambda'})^{d_{\mathcal{M}}(u, s_1) + d_{\mathcal{M}}(s_1, s_2) + \cdots + d_{\mathcal{M}}(s_{\ell-1}, q)}\\
&\le (\tilde a)^{\ell}\prod_{i=1}^{\ell} C_{i}.
\end{split}
\end{align}

Hence, we obtain that 
$$\norm{(\prod_{i=1}^{\ell} A_i)_{u, q}} \le C'(\lambda')^{d_M(u, q)}.$$
\end{proof}

\subsubsection*{Step 4: Establish correlation decay properties of matrix $M$}

In this step, we use the property developed for general exponential-decay matrices on $M$ and derive the perturbation bound in the Theorem \ref{thm:networked-exp-decay-tight}.

\begin{lemma}\label{le:J-properties}
For $\ell \ge 1$, time index $i, j \ge 1$, $J^{\ell}$ has the following properties:
\begin{itemize}
    \item $(J^{\ell})_{i, j} = 0$ if $\ell < |i-j|$ or $\ell - |i - j|$ is odd. 
    \item $(J^{\ell})_{i, j}$ is a summation of terms $\prod_{k = 1}^{\ell}A_{j_k, i_k}D_{i_k, i_k}^{-1}$ and the number of such terms is bounded by ${\ell \choose (\ell - |i-j|)/2}$.
\end{itemize}
\end{lemma}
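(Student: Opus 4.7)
\textbf{Proof proposal for Lemma \ref{le:J-properties}.}

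The plan is to exploit the block tridiagonal structure of $J = AD^{-1}$ and reduce the problem to counting lattice walks. Since $A$ has zero diagonal blocks and $A_{p,q} = 0$ whenever $|p-q| \neq 1$, and since $D^{-1}$ is block diagonal, we have $J_{p,q} = A_{p,q}\,D_{q,q}^{-1}$, which is nonzero only when $|p-q| = 1$. Expanding the matrix product, I would write
\begin{align*}
    (J^{\ell})_{i,j} \;=\; \sum_{i_1,\dots,i_{\ell-1}} J_{i,i_1}\,J_{i_1,i_2}\cdots J_{i_{\ell-1},j} \;=\; \sum_{\substack{i_0=i,\,i_\ell=j\\ |i_{m-1}-i_m|=1}} \prod_{m=1}^{\ell} A_{i_{m-1},i_m}\,D_{i_m,i_m}^{-1},
\end{align*}
so that every nonzero summand naturally takes the promised form $\prod_{k=1}^{\ell} A_{j_k,i_k} D_{i_k,i_k}^{-1}$ with $j_k = i_{k-1}$ and $i_k$ ranging over the walk.

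Next I would translate the indexing constraint into a combinatorial statement: each sequence $(i_0,i_1,\dots,i_\ell)$ with $|i_{m-1}-i_m|=1$ is a walk of length $\ell$ on $\mathbb{Z}$ starting at $i$ and ending at $j$, with each step equal to $\pm 1$. If $a$ is the number of $+1$ steps and $b$ the number of $-1$ steps, then $a+b=\ell$ and $a-b=j-i$. Solving, $a = (\ell + (j-i))/2$ and $b = (\ell - (j-i))/2$. For a solution in nonnegative integers, we need (i) $\ell \ge |j-i|$ and (ii) $\ell - |j-i|$ even; if either fails, no walks exist and $(J^\ell)_{i,j} = 0$, yielding the first bullet. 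When both hold, the number of walks is $\binom{\ell}{a} = \binom{\ell}{(\ell + (j-i))/2}$, and by the symmetry $\binom{\ell}{a} = \binom{\ell}{\ell - a}$ this equals $\binom{\ell}{(\ell - |i-j|)/2}$, which is the claimed bound on the number of summands.

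There is no real obstacle here; the lemma is essentially a bookkeeping result, and the only subtlety is being careful about the parity argument and the sign convention in $(\ell \pm (j-i))/2$ when $j < i$, which is handled uniformly by passing to $|i-j|$ via binomial symmetry. One minor point worth noting in the write-up: the statement bounds the number of terms by $\binom{\ell}{(\ell-|i-j|)/2}$, and the count above actually attains this value, so the bound is tight. This matches the usage in the subsequent Lemma \ref{le:J-bound}, where combining this combinatorial count with the exponential-decay product bound from Lemma \ref{thm: l-power} gives the desired block-norm estimate on $((J^\ell)_{i,j})_{u,q}$.
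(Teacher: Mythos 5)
Your proof is correct, but it takes a genuinely different route from the paper's. The paper argues by induction on $\ell$: it writes $(J^{\ell})_{i,j} = (J^{\ell-1})_{i,j-1}A_{j-1,j}D_{j,j}^{-1} + (J^{\ell-1})_{i,j+1}A_{j+1,j}D_{j,j}^{-1}$ and then verifies, via a three-case analysis (odd parity, $i=j$, $i\neq j$), the Pascal identity $\binom{\ell-1}{(\ell-1-|i-j-1|)/2} + \binom{\ell-1}{(\ell-1-|i-j+1|)/2} = \binom{\ell}{(\ell-|i-j|)/2}$. You instead expand $(J^{\ell})_{i,j}$ directly and put the nonzero summands in bijection with length-$\ell$ walks on $\mathbb{Z}$ with $\pm 1$ steps from $i$ to $j$; this delivers the vanishing condition ($\ell < |i-j|$ or wrong parity) and the binomial count in one stroke, and it makes transparent where $\binom{\ell}{(\ell-|i-j|)/2}$ comes from — your argument is essentially the closed form of the paper's recursion, trading the inductive case analysis for a standard lattice-path count. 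One small caveat: your remark that the bound is attained overlooks boundary truncation — the time indices of $J$ run only over $1,\dots,k-1$, so walks that exit this range contribute zero blocks and the true number of nonzero terms can be strictly smaller than $\binom{\ell}{(\ell-|i-j|)/2}$. This does not affect the lemma, which asserts only an upper bound, nor its use in Lemma \ref{le:J-bound}.
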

Note for integers $m, k \ge 1$, we define ${m \choose k/2} = 0$  if $k$ is odd. 
\begin{proof}\label{pf:J-properties}
Since $J$ is a tri-diagonal banded matrix, $J^{\ell}_{i, j} = 0$ for $\ell < |i - j|$. We prove the rest of properties of $J$ by induction on $\ell$. When $\ell = 1$, 
$$J_{i, i} = 0, \quad J_{i, i-1} = A_{i, i-1}D_{i-1, i-1}^{-1}, \quad J_{i, i+1} = A_{i, i+1}D_{i+1, i+1}^{-1}.$$
Lemma \ref{le:J-properties} holds for the base case. Suppose Lemma \ref{le:J-properties} holds for $J^{q}$ for $q \le \ell-1$. Let $q = \ell$, then 
\begin{align*}
J^{\ell}_{i, j} &= \sum_{k}J^{\ell-1}_{i, k}J_{k, j} = J^{\ell-1}_{i, j-1}A_{j-1, j}D^{-1}_{j, j} + J^{\ell-1}_{i, j+1}A_{j+1, j}D_{j, j}^{-1}.
\end{align*}
By induction hypothesis, $J^{\ell-1}_{i, j}$ is a summation of terms $\prod_{k = 1}^{\ell-1}A_{j_k, i_k}D_{i_k, i_k}^{-1}$. Moreover, the number of such terms is bounded by ${\ell-1 \choose (\ell-1-|i-j-1|)/2} + {\ell-1 \choose (\ell-1-|i-j+1|)/2}$. Next we will show 
${\ell-1 \choose (\ell-1-|i-j-1|)/2} + {\ell-1 \choose (\ell-1-|i-j+1|)/2} = {\ell \choose (\ell - |i-j|)/2}$ case by case. 

\textbf{Case 1: }$\ell - |i-j|$ is odd.

If $\ell - |i-j|$ is odd, then $\ell - 1 - |i-j -1|$ and $\ell - 1 - |i-j + 1|$ are both odd. Under this case, 
$${\ell-1 \choose (\ell-1-|i-j-1|)/2} + {\ell-1 \choose (\ell-1-|i-j+1|)/2} = 0, $$
which is equal to
${\ell \choose (\ell-|i-j|)/2}.$

\textbf{Case 2: }$\ell - |i-j|$ is even and $i = j$. Under this case, 
we have
$${\ell-1 \choose (\ell-1-|i-j-1|)/2} + {\ell-1 \choose (\ell-1-|i-j+1|)/2} = {\ell-1 \choose \ell/2-1} + {\ell-1 \choose \ell/2-1}.$$
Since $\ell$ is even, ${\ell-1 \choose \ell/2-1} + {\ell-1 \choose \ell/2-1} = {\ell \choose \ell/2} = {\ell \choose (\ell - |i-j|)/2}.$

\textbf{Case 3: }$\ell - |i-j|$ is even and $i \neq j$.

If $\ell - |i-j|$ is even, then $\ell - 1 - |i-j -1|$ and $\ell - 1 - |i-j + 1|$ are both even. We denote $(\ell - |i-j|)/2$ as $k_0$. By triangle inequality,  $(\ell-1-|i-j-1|)/2$ and $(\ell-1-|i-j+1|)/2$ are
in $\{k_0-1, k_0\}$. Since $i \neq j$, 
$${\ell-1 \choose (\ell-1-|i-j-1|)/2} + {\ell-1 \choose (\ell-1-|i-j+1|)/2} = {\ell-1 \choose k_0 - 1} + {\ell-1 \choose k_0}, $$
which sums to ${\ell \choose k_0}$ by Pascal's triangle. 

\end{proof}

Next we present the proof of Lemma \ref{le:J-bound}. 

\begin{proof}[Proof of Lemma \ref{le:J-bound}]\label{apx:J-bound}
By Lemma \ref{le:J-properties}, $(J^{\ell})_{i, j}$ is a summation of terms $\prod_{k = 1}^{\ell}A_{j_k, i_k}D_{i_k, i_k}^{-1}$ and the number of such terms is bounded by ${\ell \choose (\ell - |i-j|)/2}$.

Define $B_{k} \coloneqq A_{j_k, i_k}D_{i_k, i_k}^{-1}$. Recall $A_{j_k, i_k}$ is a diagonal matrix and $D_{i_k, i_k}$ is a graph-induced banded matrix. 

$$\norm{(B_k)_{u, q}} = \norm{(A_{j_k, i_k}D_{i_k, i_k}^{-1})_{u, q}} = \norm{(A_{j_k, i_k})_{u,u}(D_{i_k, i_k}^{-1})_{u, q}} \le \ell_T\norm{(D_{i_k, i_k}^{-1})_{u, v}} \le \frac{2\ell_T}{\mu}\gamma_S^{\dist(u, v)}.$$
where the last inequality is by using Lemma
\ref{thm:graph-induced-band-mat-inverse} on $D_{i_k, i_k}$.

Under the condition $b < \infty$, we can use Lemma \ref{thm: l-power} to obtain the following bound, 
$$\norm{(\prod_{k = 1}^{\ell}A_{j_k, i_k}D_{i_k}^{-1})_{u, v}} \le (b\frac{2\ell_T}{\mu})^{\ell} (\gamma_S')^{\dist(u, v)}.$$

Since the number of such terms is bounded by ${\ell \choose (\ell - |i-j|)/2}$, we have 
$$\norm{((J^{\ell})_{i, j})_{u, q}} \le {\ell \choose (\ell - |i-j|)/2}(b\frac{2\ell_T}{\mu})^{\ell} (\gamma_S')^{\dist(u, v)}.$$ 
\end{proof}

\begin{lemma}\label{le:final-step}
Given $1 \le \tau', \tau \le k-1$, $y \in \bR^{|\partial N_v^r| \times n}$ and $v_0 \in N_v^{r-1}$, we have
$$\norm{\Bigg((M)^{-1}_{\tau', \tau}K_{\tau}^{(\tau)} y\Bigg)_{v_0}} \le C_1\rho_T^{|\tau' - \tau|}\sum_{u \in \partial N_v^r}\rho_S^{\dist(v_0, u) - 1}\norm{y_{u}},$$
and for $i \in \{1, k-1\}$, $e \in \bR^{|N_v^r|\times n}$,
$$\norm{\Bigg((M^{-1})_{\tau', i})R^{(i)}_i e\Bigg)_{v_0}} 
\le C_2\rho_T^{|\tau' - i| + 1}\sum_{u \in N_v^r} \rho_S^{\dist( v_0, u)}\norm{e_{u}},$$
where $\rho_T = \frac{4\tilde a\ell_T}{\mu}$ and $\rho_S =  (1+b_1+b_2)\gamma_S.$ We let $C_1 = C_2 = \max\{\frac{a^2}{2\tilde a(1 - 4\tilde a\ell_T/\mu)}, \frac{2a^2\Delta \ell_S/\mu}{\gamma_S(1 + b_1 + b_2)(1 - 4\tilde a\ell_T/\mu)}\}$. 
\end{lemma}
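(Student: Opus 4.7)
The plan is to begin from the decomposition $(M^{-1})_{\tau', \tau} = (D^{-1})_{\tau', \tau'} \sum_{\ell \geq 0} (-J)^\ell_{\tau', \tau}$ established in Step~2, bound each summand separately, and then sum over $\ell \ge 0$. The central reduction is to view $(D^{-1})_{\tau',\tau'}(J^\ell)_{\tau',\tau}$ as a product of two exponentially decaying block matrices on the spatial graph $\mathcal{G}$: by Lemma~\ref{thm:graph-induced-band-mat-inverse}, $(D^{-1})_{\tau',\tau'}$ has constant $2/\mu$ and decay rate $\gamma_S$; by Lemma~\ref{le:J-bound}, $(J^\ell)_{\tau',\tau}$ has constant $\binom{\ell}{(\ell-|\tau'-\tau|)/2}(2 b\, \ell_T/\mu)^\ell$ and decay rate $\gamma_S'$. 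I will specialise the free parameter in Lemma~\ref{le:J-bound} by setting $\gamma_S' = (1+b_1)\gamma_S$, which forces the constant $b$ there to coincide with $\tilde a = \sum_\gamma (1/(1+b_1))^\gamma h(\gamma)$ from Theorem~\ref{thm:networked-exp-decay-tight}.

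With this in place I apply Lemma~\ref{thm: l-power} to the two matrices above, treating $\gamma_S \le \gamma_S'$ as a common source rate $\lambda = \gamma_S'$ and taking the target rate $\lambda' = \rho_S = (1+b_1+b_2)\gamma_S$. The ratio $\lambda/\lambda' = (1+b_1)/(1+b_1+b_2)$ makes the $\tilde a$ of Lemma~\ref{thm: l-power} equal to the constant $a$ of the theorem, yielding $\|((D^{-1})_{\tau',\tau'}(J^\ell)_{\tau',\tau})_{v_0, u'}\| \le (2a^2/\mu)\,\binom{\ell}{(\ell-|\tau'-\tau|)/2}(2\tilde a\,\ell_T/\mu)^\ell \rho_S^{\dist(v_0, u')}$. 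Post-multiplying by $K^{(\tau)}_\tau$ contributes a factor $\Delta\,\ell_S$ and shifts the distance exponent down by one (since $K^{(\tau)}_\tau$ is supported only on interior-to-boundary edges with norm at most $\ell_S$ and each node has at most $\Delta$ neighbours); post-multiplying by $R^{(i)}_i$ contributes a factor $\ell_T$ with no shift, because $R^{(i)}_i$ is block-diagonal with norm bounded by~$\ell_T$.

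The last step is to sum over $\ell \ge 0$. Because $(J^\ell)_{\tau', \tau} = 0$ unless $\ell - |\tau'-\tau|$ has the same parity (Lemma~\ref{le:J-properties}), I substitute $\ell = |\tau'-\tau| + 2m$ and invoke the crude bound $\binom{|\tau'-\tau| + 2m}{m} \le 2^{|\tau'-\tau| + 2m}$, which collapses the sum to a geometric series in $4\tilde a\,\ell_T/\mu$. The assumption $\mu \ge 8\tilde a\,\ell_T$ both guarantees convergence and produces the factor $\rho_T^{|\tau'-\tau|}/(1 - 4\tilde a\,\ell_T/\mu)$ with $\rho_T = 4\tilde a\,\ell_T/\mu$. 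For the $R^{(i)}$ bound, rewriting the leftover $\ell_T = \rho_T \mu/(4\tilde a)$ is exactly what converts $\rho_T^{|\tau'-i|}$ into $\rho_T^{|\tau'-i|+1}$ and produces the constant $a^2/(2\tilde a(1 - 4\tilde a\,\ell_T/\mu))$. I expect the main obstacle to be the careful bookkeeping needed to merge the two cases into the single max expression $C_1 = C_2$ stated in the lemma, and in particular to reconcile the $\rho_S^{-1}$ arising from the interior-to-boundary shift in the $K^{(\tau)}$ argument with the factor $\gamma_S(1+b_1+b_2)$ appearing in the second branch of that max.
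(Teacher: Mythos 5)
Your proposal is correct and follows essentially the same route as the paper's proof: the same decomposition $(M^{-1})_{\tau',\tau} = D^{-1}_{\tau',\tau'}\sum_{\ell\ge 0}(-J)^{\ell}_{\tau',\tau}$, the same invocation of Lemma~\ref{le:J-bound} with $\gamma_S'=(1+b_1)\gamma_S$ followed by Lemma~\ref{thm: l-power} with target rate $(1+b_1+b_2)\gamma_S$, the same $\binom{\ell}{(\ell-|\tau'-\tau|)/2}\le 2^{\ell}$ bound collapsing the sum into a geometric series, and the same rewriting of the leftover $\ell_T$ as $\rho_T\mu/(4\tilde a)$ to produce $\rho_T^{|\tau'-i|+1}$. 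The bookkeeping issue you flag at the end is resolved exactly as you suspect: the paper absorbs the $(\lambda')^{-1}$ from the interior-to-boundary distance shift into the constant, which is where the $\gamma_S(1+b_1+b_2)$ in the second branch of the max comes from.
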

\begin{proof}\label{pf:M-vector-bound}
Given $1 \le \tau, \tau' \le k-1$ and $v_0 \in N_v^{r-1}$, since $M^{-1} = D^{-1}\sum_{\ell \ge 0}(-J)^{\ell}$, we have 
\begin{align}\label{eq:19}
\norm{\Bigg((M)^{-1}_{\tau', \tau}K_{\tau}^{(\tau)} y\Bigg)_{v_0}} 
&= \norm{\Bigg(D^{-1}_{\tau', \tau'}\sum_{\ell \ge 0}(-J)^{\ell}_{\tau', \tau}K_{\tau}^{(\tau)} y\Bigg)_{v_0}}.
\end{align}

With slight abuse of notation, we use $K$ to denote $K_{\tau}^{(\tau)}$, and $Q^{-1}$ to denote $D^{-1}_{\tau', \tau'}$ in this proof from now.
We can rewrite the right hand side of \eqref{eq:19} using the new notation as follows:
\begin{align}\label{ineq: M-vector-bound}
\begin{split}
\norm{\Bigg(Q^{-1}\sum_{\ell \ge 0}(-J)^{\ell}_{\tau', \tau}K y\Bigg)_{v_0}}
&\le \sum_{\ell \ge 0}\norm{\Bigg(Q^{-1}(-J)^{\ell}_{\tau', \tau}K y\Bigg)_{v_0}}\\
&= \sum_{\ell \ge 0}\norm{\sum_{q \in N_v^{r-1}}\Bigg(Q^{-1} (-J)^{\ell}_{\tau', \tau}\Bigg)_{v_0, q} (Ky)_q}\\
&\le \sum_{\ell \ge 0}\sum_{q \in N_v^{r-1}} \norm{\Bigg(Q^{-1} (-J)^{\ell}_{\tau', \tau}\Bigg)_{v_0, q}} \norm{(Ky)_q}.
\end{split}
\end{align}

For a given $q \in N_v^{r-1}$ and $y \in \bR^{|\partial N_v^r|d}$, 
$$\norm{(Ky)_{q}} = \norm{\sum_{u \in \partial N_v^r}K_{q, u}y_{u}} = \norm{\sum_{u \in \partial N_v^r \cap N_{q}^1}K_{q, u}y_{u}}.$$
where the last equality is since spacial interaction costs are only among neighboring nodes.

For a given $u \in \partial N_v^r$, since the spacial interaction cost for each edge is $\ell_S$ smooth, 
$$\norm{K_{q, u}y_u} \le \norm{K_{q, u}}\norm{y_u} \le \ell_S\norm{y_u},$$
which gives
$$\norm{(Ky)_{q}} \le \sum_{u \in \partial N_v^r \cap N_{q}^1}\ell_S\norm{y_u}.$$
Therefore, 
\begin{align}
\begin{split}
\norm{\Bigg(Q^{-1}\sum_{\ell \ge 0}(-J)^{\ell}_{\tau', \tau}K y\Bigg)_{v_0}}
&\le \ell_S\sum_{\ell \ge 0}\sum_{q \in N_v^{r-1}} \norm{\Bigg(Q^{-1} (-J)^{\ell}_{\tau', \tau}\Bigg)_{v_0, q}} \sum_{u \in \partial N_v^r \cap N_{q}^1}\norm{y_u}.
\end{split}
\end{align}

By Lemma \ref{le:J-bound}, $(-J)^{\ell}_{\tau', \tau}$ satisfies the following exponential decay properties: for any $u, q \in N_v^{r-1}$,
$$\norm{((J^{\ell})_{\tau', \tau})_{u, q}} \le {\ell \choose (\ell - |\tau' - \tau|)/2}(\tilde a\frac{2\ell_T}{\mu})^{\ell} (\gamma_S')^{\dist(u, q)},$$
where we choose $\delta = b_1\cdot\gamma_S$, $\gamma_S' = (1 + b_1)\gamma_S$ and $\tilde a = \sum_{\gamma \ge 0}(\frac{1}{1 + b_1})^{\gamma} h(\gamma)$.

Moreover, $Q^{-1}$ (which denotes $D_{\tau', \tau'}^{-1}$) is the inverse of a graph-induced banded matrix. $Q^{-1}$ satisfies: for any $u, q \in N_v^{r-1}$,
$$\norm{(Q^{-1})_{u, q}} \le \frac{2}{\mu}\gamma_S^{\dist(u, q)} <  \frac{2}{\mu}(\gamma_S')^{\dist(u, q)},$$
where the first inequality is again by using Lemma
\ref{thm:graph-induced-band-mat-inverse} on $D_{\tau', \tau'}$.

Applying Lemma \ref{thm: l-power} on $Q^{-1}$ and $\norm{((J^{\ell})_{\tau', \tau})}$, we have for any $u, q \in N_v^{r-1}$, and $\ell \ge 1$, 

$$\norm{\Bigg(Q^{-1} (-J)^{\ell}_{\tau', \tau}\Bigg)_{u, q}}  \le a^2\frac{2}{\mu}{\ell \choose (\ell - |\tau' - \tau|)/2}(\tilde a\frac{2\ell_T}{\mu})^{\ell} (\lambda')^{\dist(u, q)},$$
where $\lambda' \coloneqq  \gamma_S' + b_2\cdot \gamma_S < 1$ and $a \coloneqq  \sum_{\gamma \ge 0}(\frac{1+b_1}{1 + b_1 + b_2})^{\gamma} h(\gamma).$ 
Note that $J^{0} \coloneqq I$, it is straightforward to verify that the above inequality holds when $\ell = 0$. 

With the exponential decay properties of $Q^{-1} (-J)^{\ell}_{\tau', \tau}$, we have
\begin{align}
\begin{split}
\norm{\Bigg(Q^{-1}\sum_{\ell \ge 0}(-J)^{\ell}_{\tau', \tau}K y\Bigg)_{v_0}}
&\le \ell_Sa^2\frac{2}{\mu}\sum_{\ell \ge 0}{\ell \choose (\ell - |\tau' -\tau|)/2}(\tilde a\frac{2\ell_T}{\mu})^{\ell}\sum_{q \in N_v^{r-1}} (\lambda')^{\dist(v_0, q)} \sum_{u \in \partial N_v^r \cup N_q^1} \norm{y_u}\\
&\le \ell_Sa^2\frac{2}{\mu}\sum_{\ell \ge |\tau' - \tau|}{\ell \choose (\ell - |\tau' - \tau|)/2}(\tilde a\frac{2\ell_T}{\mu})^{\ell} \sum_{u \in \partial N_v^r }\Delta(\lambda')^{\dist(v_0, u)-1} \norm{y_u}\\
&\le \Delta \ell_Sa^2\frac{2}{\mu}\sum_{\ell \ge |\tau' - \tau|}(\frac{4\tilde a\ell_T}{\mu})^{\ell} \sum_{u \in \partial N_v^r }(\lambda')^{\dist(v_0, u)-1} \norm{y_u}\\
&\le \frac{2\Delta \ell_Sa^2}{\mu - 4\tilde a\ell_T}(\frac{4\tilde a\ell_T}{\mu})^{|\tau' - \tau|} \sum_{u \in \partial N_v^r }(\lambda')^{\dist(v_0, u)-1} \norm{y_u}\\
&= \frac{2\Delta \ell_Sa^2}{\lambda'(\mu - 4\tilde a\ell_T)}(\frac{4\tilde a\ell_T}{\mu})^{|\tau' - \tau|} \sum_{u \in \partial N_v^r }(\lambda')^{\dist(v_0, u)} \norm{y_u}.\\
\end{split}
\end{align}
The third inequality uses ${\ell \choose (\ell - |\tau' - \tau|)/2} \le 2^{\ell}$, which can be proved using the following version of Stirling's approximation:
For all $n \ge 1$, $e$ denotes the natural number,
$$\sqrt{2\pi n} (n/e)^{n} e^{1/(12n+1)} < n! < \sqrt{2\pi n} (n/e)^{n} e^{1/(12n)}.$$

Similarly, consider $\norm{((M^{-1})_{\tau', i})R^{(i)}_i e)_{v_0}}$ for $i \in \{1, k-1\}$. With slight abuse of notation, in this proof, we use $R$ to denote $R_{i}^{(i)}$ and use the notation $Q^{-1}$ to denote $D^{-1}_{\tau', \tau'}.$ Following the same steps as before, we have  
\begin{align}
\begin{split}
\norm{\Bigg((M^{-1})_{\tau', i})R^{(i)}_i e\Bigg)_{v_0}} 
&\le \sum_{\ell \ge 0} \sum_{q \in N_v^{r}} \norm{\Bigg(Q^{-1} (-J)^{\ell}_{\tau', i}\Bigg)_{v_0, q}}\norm{(Re)_q}.
\end{split}
\end{align}

Since temporal interactions occurs for the same node under consecutive time steps,  $R$ is a diagonal block matrix. Hence, 
$$\norm{(R e)_{q}} = \norm{R_{q, q} e_q} \le  \ell_T\norm{e_q}.$$ 
Moreover, using the exponential decay properties of $Q^{-1} (-J)^{\ell}_{\tau', i}$, we have for $u, q \in N_v^{r-1}$,  
$$\norm{\Bigg(Q^{-1} (-J)^{\ell}_{\tau', i}\Bigg)_{u, q}} \le a^2\frac{2}{\mu}{\ell \choose (\ell - |\tau' - i|)/2}(\tilde a\frac{2\ell_T}{\mu})^{\ell} (\lambda')^{\dist(u, q)}.$$

Therefore, 
\begin{align}
\begin{split}
\norm{((M^{-1})_{\tau', i})R^{(i)}_i e)_{v_0}} 
&\le \sum_{\ell \ge 0} \sum_{q \in N_v^{r}} a^2\frac{2}{\mu}{\ell \choose (\ell - |\tau' - i|)/2}(\tilde a\frac{2\ell_T}{\mu})^{\ell} (\lambda')^{\dist(v_0, q)}\ell_T\norm{e_q}\\
&\le \sum_{\ell \ge |\tau' - i|} \sum_{q \in N_v^r} a^2\frac{2}{\mu}{\ell \choose (\ell - |\tau' - i|)/2}(\tilde a\frac{2\ell_T}{\mu})^{\ell} (\lambda')^{\dist(v_0, q)}\ell_T\norm{e_q}\\
&\le \frac{2\ell_T a^2}{\mu}\sum_{\ell \ge |\tau' - i|}(\frac{4\tilde a \ell_T}{\mu})^{\ell}  \sum_{q \in N_v^r} {(\lambda')}^{\dist(v_0, q)} \norm{e_q}\\
&\le \frac{2\ell_T a^2}{\mu - 4\tilde a\ell_T} (\frac{4\tilde a\ell_T}{\mu})^{|\tau' - i|} \sum_{q \in N_v^r} (\lambda')^{\dist(v_0, q)} \norm{e_q}\\
&= \frac{a^2 \mu}{2\tilde a(\mu - 4\tilde a\ell_T)} (\frac{4\tilde a\ell_T}{\mu})^{|\tau' - i| + 1} \sum_{q \in N_v^r} (\lambda')^{\dist(v_0, q)} \norm{e_q}.\\
\end{split}
\end{align}
\end{proof}
Given time index $1 \le \tau' \le k-1$, node $v_0 \in N_v^{r-1}$, and perturbation vector $e = (e_0, e_1, \cdots, e_k)$, 
\begin{align*}
\norm{(\frac{d}{d\theta} \psi(\zeta + \theta e))_{\tau', v_0}} &\le \norm{\Bigg(M^{-1}_{\tau', 1} R^{(1)}_1e_0\Bigg)_{v_0}} + \norm{\Bigg(M^{-1}_{\tau', k-1}R^{(k-1)}_{k-1}e_k\Bigg)_{v_0}} + \sum_{\tau = 1}^{k-1} \norm{\Bigg(M^{-1}_{\tau', \tau}K^{(\tau)}_{\tau} e_{\tau}\Bigg)_{v_0}}\\
&\le \frac{a^2 \mu}{2\tilde a(\mu - 4\tilde a\ell_T)}\Bigg[\rho_T^{\tau'}\sum_{q \in N_v^r} \rho_S^{\dist(v_0, q)}\norm{(e_{0})_q} + \rho_T^{k- \tau'}\sum_{q \in N_v^r} \rho_S^{\dist(v_0, q)}\norm{(e_{k})_q}\Bigg] \\
&+ \sum_{\tau = 1}^{k-1} \frac{2\Delta \ell_S a^2}{\lambda'(\mu - 4\tilde a\ell_T)}\rho_T^{|\tau' - \tau|} \sum_{u \in \partial N_v^r}(\rho_S)^{\dist(v_0, u)}\norm{(e_{\tau})_u}
\end{align*}
where $\rho_T = \frac{4\tilde a\ell_T}{\mu}$ and $\rho_S = \lambda' = (1+b_1+b_2)\gamma_S.$ We let $C = \max\{\frac{a^2}{2\tilde a(1 - 4\tilde a\ell_T/\mu)}, \frac{2a^2\Delta \ell_S/\mu}{\gamma_S(1 + b_1 + b_2)(1 - 4\tilde a\ell_T/\mu)}\}$. 
Under the condition $\mu \ge \max\{8\tilde a\ell_T, \Delta \ell_S (b_1 + b_2)/4\}$, $\rho_T < 1$ and $\rho_S  < 1$. 

Then, 
\begin{align*}
    &\norm{(\frac{d}{d\theta} \psi(\zeta + \theta e))_{\tau', v_0}}\\
\le{}& C\Bigg[\rho_T^{\tau'}\sum_{q \in N_v^r} \rho_S^{\dist(v_0, q)}\norm{(e_{0})_q} + \rho_T^{k- \tau'}\sum_{q \in N_v^r} \rho_S^{\dist(v_0, q)}\norm{(e_{k})_q}+ \sum_{\tau = 1}^{k-1} \rho_T^{|\tau' - \tau|} \sum_{u \in \partial N_v^r}(\rho_S)^{\dist(v_0, u)}\norm{(e_{\tau})_u}\Bigg].
\end{align*}
Finally, let $\zeta = \{y_{t-1}^u, z_\tau^u | (\tau, u) \in \partial N_{(v, t)}^{(k, r)}\}$ and $e = \{(y_{t-1}^u)' - y_{t-1}^u, (z_\tau^u)' - z_\tau^u\}$. By integration, 
\begin{align*}
\norm{\psi_{(t, v)}^{(k, r)}\left(\{y_{t-1}^u\}, \{z_\tau^u\}\right)_{(t_0, v_0)} - (\psi_{(t, v)}^{(k, r)}\left(\{(y_{t-1}^u)'\}, \{(z_\tau^u)'\}\right)_{(t_0, v_0)}}
&\le \int_{0}^1 \norm{(\frac{d}{d\theta} \psi(\zeta + \theta e))_{t_0, v_0}} d\theta,
\end{align*}
which is bounded by 
$$C\sum_{u \in N_v^r} \rho_T^{t_0 - (t-1)}\rho_S^{\dist(v_0, u)} \norm{y_{t-1}^u - (y_{t-1}^u)'} + C\sum_{(u, \tau) \in \partial N_{(t, v)}^{(k,r)} }  \rho_T^{|t_0-\tau|} \rho_S^{\dist(v_0, u)}
\norm{z_\tau^u - (z_\tau^u)'}.$$

\subsection{Adding Constraints to Perturbation Bounds}\label{apx:adding-constraints}
Recall that in Appendix \ref{apx:networked-exp-decay} and \ref{apx:networked-exp-decay-tight}, we showed \Cref{thm:networked-exp-decay} and \Cref{thm:networked-exp-decay-tight} under the assumption that the individual decisions are unconstrained to simplify the analysis. In this section, we present a general way to relax this assumption by incorporating logarithm barrier functions, which also applies for \Cref{thm:global-exp-decay}. 

Recall that in Assumption \ref{assump:costs-and-feasible-sets}, we assume that $D_t^v$ is convex with a non-empty interior, and can be expressed as
\[D_t^v := \{x_t^v \in \mathbb{R}^n \mid (g_t^v)_i(x_t^v) \leq 0, \forall 1 \leq i \leq m_t^v\},\]
where the $i$ th constraint $(g_t^v)_i: \mathbb{R}^n \to \mathbb{R}$ is a convex function in $C^2$. For any time-vertex pair $(\tau, v)$, we can approximate the individual constraints
\[(g_\tau^v)_i(x_\tau^v) \leq 0, \forall 1 \leq i \leq m_\tau^v,\]
by adding the \textit{logarithmic barrier function} $- \mu \sum_{i=1}^{m_\tau^v} \ln{(- (g_\tau^v)_i(x_\tau^v))}$ to the original node cost function $f_\tau^v$. Here, parameter $\mu$ is a positive real number that controls how ``good'' the barrier function approximates the indicator function
\begin{align*}
    \mathbf{I}_{D_\tau^v}(x_\tau^v) = \begin{cases}
    0 & \text{ if }(g_\tau^v)_i(x_\tau^v) \leq 0, \forall 1 \leq i \leq m_\tau^v,\\
    +\infty & \text{ otherwise.}
    \end{cases}
\end{align*}
The approximation improves as parameter $\mu$ becomes closer to $0$. Thus, the new node cost function will be
\[B_\tau^v(x_\tau^v; \mu) := f_\tau^v (x_\tau^v) - \mu \sum_{i=1}^{m_\tau^v} \ln{(- (g_\tau^v)_i(x_\tau^v))}.\]
As an extension of the original notation, we use $\psi_{(t, v)}^{(k, r)}(\{y_{t-1}^u\}, \{z_\tau^u\}; \mu)$ denote the optimal solution of the following optimization problem
\begin{align*}
    \argmin_{\{x_\tau^u\mid (\tau, v) \in N_{(t, v)}^{(k-1, r-1)}\}}& \sum_{\tau = t}^{t+k-1} \left(\sum_{u \in N_v^r} B_\tau^u(x_\tau^u; \mu) + \sum_{u \in N_v^r} c_\tau^u(x_\tau^u, x_{\tau-1}^u) + \sum_{(u, q) \in {\mathcal{E}}(N_v^r)} g_t^{(u, q)}(x_t^u, x_t^q)\right)\nonumber\\*
     \text{ s.t. }& x_{t-1}^u = y_{t-1}^u, \forall u \in N_v^r,\\*
     & x_\tau^u = z_\tau^u, \forall (\tau, u) \in \partial N_{(t, v)}^{(k, r)}.\nonumber
\end{align*}
Compared with $\psi_{(t, v)}^{(k, r)}(\{y_{t-1}^u\}, \{z_\tau^u\})$ defined in Section \ref{sec:main:alg}, the constraints $x_\tau^u \in D_\tau^u$ are removed and the node costs $f_\tau^u(x_\tau^u)$ are replaced with $B_\tau^u(x_\tau; \mu)$.

A key observation we need to point out is that the perturbation bounds we have shown in Appendix \ref{apx:networked-exp-decay} and \ref{apx:networked-exp-decay-tight} do not depend on the smoothness constant $\ell_f$ of node cost functions. That means the perturbation bound
\begin{align*}
    &\norm{\psi_{(t, v)}^{(k, r)}\left(\{y_{t-1}^u\}, \{z_\tau^u\}; \mu\right)_{(t_0, v_0)} - \psi_{(t, v)}^{(k, r)}\left(\{(y_{t-1}^u)'\}, \{(z_\tau^u)'\}; \mu\right)_{(t_0, v_0)}}\\* \leq{}& C_1\sum_{(u, \tau) \in \partial N_{(t, v)}^{(k,r)} }  \rho_T^{|t_0-\tau|} \rho_S^{\dist(v_0, u)}
\norm{z_\tau^u - (z_\tau^u)'} + C_2\sum_{u \in N_v^r} \rho_T^{t_0 - (t-1)}\rho_S^{\dist(v_0, u)} \norm{y_{t-1}^u - (y_{t-1}^u)'}
\end{align*}
holds for arbitrary $\mu$, where $C_1, C_2, \rho_S, \rho_T$ are specified in \Cref{thm:networked-exp-decay} or \Cref{thm:networked-exp-decay-tight} and are independent of $\mu$. Theorem 3.10 in \citet{forsgren2002interior} guarantees that $\psi_{(t, v)}^{(k, r)}(\{y_{t-1}^u\}, \{z_\tau^u\}; \mu_k)$ converge to $\psi_{(t, v)}^{(k, r)}(\{y_{t-1}^u\}, \{z_\tau^u\})$ for any positive sequence $\{\mu_k\}_{k=1}^\infty$ that tends to zero. Thus the above perturbation bound also holds for $\psi_{(t, v)}^{(k, r)}(\{y_{t-1}^u\}, \{z_\tau^u\})$ which includes the constraints on individual decisions.

Note that the argument we present in this section also works for \Cref{thm:global-exp-decay}.

\section{Competitive Bounds}

This appendix includes the proofs of the competitive bounds presented in \Cref{sec:main:CR}.

\subsection{Proof of Theorem \ref{thm:alg-perf-bound-err-inject}}\label{apx:thm:alg-perf-bound-err-inject}
We first derive an upper bound on the distance between $x_t$ and $x_t^*$.

Note that for any time step $t$, we have
\begin{align}\label{thm:oracle-perf-bound:e0}
    \norm{x_{t} - \tilde{\psi}_t\left(x_{t-1}\right)_{t}} \leq e_t.
\end{align}
Thus we see that
\begin{subequations}\label{thm:oracle-perf-bound:e1}
\begin{align}
    \norm{x_t - x_t^*}
    ={}& \norm{x_t - \tilde{\psi}_1(x_0)_{t}}\nonumber{}\\
    \leq{}& \norm{x_t - \tilde{\psi}_{t}(x_{t-1})_{t}} + \sum_{i=1}^{t-1} \norm{\tilde{\psi}_{t-i+1}(x_{t-i})_{t} - \tilde{\psi}_{t-i}(x_{t-i-1})_{t}}\nonumber\\
    \leq{}& \norm{x_t - \tilde{\psi}_{t}(x_{t-1})_{t}} + \sum_{i = 1}^{t-1} C_G \rho_{G}^{i} \norm{x_{t-i} - \tilde{\psi}_{t-i}(x_{t-i-1})_{t-i}}\label{thm:oracle-perf-bound:e1:s1}\\
    \leq{}& \sum_{i = 0}^{t-1} C_0 \rho_G^{i} \norm{x_{t-i} - \tilde{\psi}_{t-i}(x_{t-i-1})_{t-i}}\label{thm:oracle-perf-bound:e1:s2}\\
    \leq{}& \sum_{i=1}^{t} C_0 \rho_G^{t-i} e_i,\label{thm:oracle-perf-bound:e1:s3}
\end{align}
\end{subequations}
where in \eqref{thm:oracle-perf-bound:e1:s1}, we used Theorem \ref{thm:global-exp-decay} and the fact that $\tilde{\psi}_{t-i}(x_{t-i-1})_{t}$ can be written as
\[\tilde{\psi}_{t-i}(x_{t-i-1})_{t} = \tilde{\psi}_{t-i+1}\left(\tilde{\psi}_{t-i}(x_{t-i-1})_{t-i}\right)_{t}.\]
We also used $C_0 = \max\{1, C_G\}$ in \eqref{thm:oracle-perf-bound:e1:s2} and \eqref{thm:oracle-perf-bound:e0} in \eqref{thm:oracle-perf-bound:e1:s3}.

By \eqref{thm:oracle-perf-bound:e1} and the Cauchy-Schwarz Inequality, we see that
\[\norm{x_t - x_t^*}^2 \leq C_0^2 \left(\sum_{i=1}^{t} \rho_G^{t-i} e_i\right)^2 \leq C_0^2 \left(\sum_{i=1}^{t} \rho_G^{t-i}\right)\cdot \left(\sum_{i=1}^{t} \rho_G^{t-i} e_i^2\right) \leq \frac{C_0^2}{1 - \rho_G} \cdot \left(\sum_{i=1}^{t} \rho_G^{t-i} e_i^2\right).\]
Summing up over $t$ gives that
\[\sum_{t=1}^{\horizonlength} \norm{x_t - x_t^*}^2 \leq \frac{C_0^2}{(1 - \rho_G)^2}\cdot \sum_{t=1}^{\horizonlength} e_t^2.\]

\subsection{Proof of Lemma \ref{thm:per-step-err}}\label{apx:thm:per-step-err}
In this section, we show \Cref{thm:per-step-err} holds with following specific constants:
\begin{align}\label{thm:per-step-err:e0}
    e_{t}^2 :={}& \norm{x_t - x_{t\mid t-1}^*}^2\nonumber\\
    \leq{}& 4 C_1^2 C_0^2 \left(\frac{h(r)^2 \rho_G^2}{(1 - \rho_T)(1 - \rho_G^2 \rho_T)}\cdot \rho_S^{2r} + C_3(r)^2 \cdot \rho_T^{2(k-1)} \cdot \rho_G^{2k}\right) \norm{x_{t-1} - x_{t-1}^*}^2\nonumber\\
    &+ \frac{8 C_1^2}{\mu} \left(\frac{ h(r)^2}{1 - \rho_T} \cdot \rho_S^{2r} \sum_{\tau = t}^{t+k-1} \rho_T^{\tau - t} f_\tau(x_\tau^*) + C_3(r)^2 \cdot \rho_T^{2(k-1)} f_{t+k-1}(x_{t+k-1}^*)\right)
\end{align}

Note that, by the principle of optimality, we have
\begin{align*}
    x_t^v &= \psi_{(t, v)}^{(k, r)}\left(\{x_{t-1}^u\}, \{\minimizer_\tau^u\}\right)_{(t, v)},\\
    (x_{t\mid t-1}^v)^* &= \psi_{(t, v)}^{(k, r)}\left(\{x_{t-1}^u\}, \{(x_{\tau\mid t-1}^u)^*\}\right)_{(t, v)}.
\end{align*}
Recall that we define the quantity
$C_3(r) := \sum_{\gamma = 0}^{r} h(\gamma) \cdot \rho_S^\gamma $
to simplify the notation. 

Since the exponentially decaying local perturbation bound holds in \Cref{thm:networked-exp-decay:meta}, we see that
\begin{align}
    \norm{x_t^v - (x_{t \mid t-1}^v)^*}
    \leq{}& C_1 \rho_S^{r} \sum_{\tau = t}^{t+k-1} \rho_T^{\tau - t} \sum_{u \in \partial N_{v}^{r} } \norm{(x_{\tau\mid t-1}^u)^* - \minimizer_\tau^u}\nonumber\\
    &+ C_1\rho_T^{k-1} \sum_{u \in N_v^r} \rho_S^{d_\mathcal{G}(u, v)} \norm{(x_{t+k-1\mid t-1}^u)^* - \minimizer_{t+k-1}^u} ,
\end{align}
which implies that
\begin{subequations}\label{thm:per-step-err:e1}
\begin{align}
    \norm{x_t^v - (x_{t \mid t-1}^v)^*}^2
    \leq{}& 2C_1^2 \rho_S^{2r} \left(\sum_{\tau = t}^{t+k-1} \rho_T^{\tau - t} \sum_{u \in \partial N_{v}^{r} } \norm{(x_{\tau\mid t-1}^u)^* - \minimizer_\tau^u}\right)^2\nonumber\\
    &+ 2C_1^2 \rho_T^{2(k-1)} \left(\sum_{u \in N_v^r} \rho_S^{d_\mathcal{G}(u, v)} \norm{(x_{t+k-1\mid t-1}^u)^* - \minimizer_{t+k-1}^u}\right)^2\label{thm:per-step-err:e1:s1}\\
    \leq{}& 2C_1^2 \rho_S^{2r} \left(\sum_{\tau = t}^{t+k-1} \rho_T^{\tau - t} \sum_{u \in \partial N_{v}^{r} } 1\right) \left(\sum_{\tau = t}^{t+k-1} \rho_T^{\tau - t} \sum_{u \in \partial N_{v}^{r} } \norm{(x_{\tau\mid t-1}^u)^* - \minimizer_\tau^u}^2\right)\nonumber\\
    &+ 2C_1^2 \rho_T^{2(k-1)} \left(\sum_{u \in N_v^r} \rho_S^{d_\mathcal{G}(u, v)} \right) \left(\sum_{u \in N_v^r} \rho_S^{d_\mathcal{G}(u, v)} \norm{(x_{t+k-1\mid t-1}^u)^* - \minimizer_{t+k-1}^u}^2\right)\label{thm:per-step-err:e1:s2}\\
    \leq{}& \frac{2 C_1^2 h(r)}{1 - \rho_T} \cdot \rho_S^{2r} \left(\sum_{\tau = t}^{t+k-1} \rho_T^{\tau - t} \sum_{u \in \partial N_{v}^{r} } \norm{(x_{\tau\mid t-1}^u)^* - \minimizer_\tau^u}^2\right)\nonumber\\
    &+ 2C_1^2 C_3(r) \cdot \rho_T^{2(k-1)} \left(\sum_{u \in N_v^r} \rho_S^{d_\mathcal{G}(u, v)} \norm{(x_{t+k-1\mid t-1}^u)^* - \minimizer_{t+k-1}^u}^2\right),\label{thm:per-step-err:e1:s3}
\end{align}
\end{subequations}
where we used the AM-GM Inequality in \eqref{thm:per-step-err:e1:s1}; we used the Cauchy-Schwarz Inequality in \eqref{thm:per-step-err:e1:s2}; we used the definitions of functions $h(r)$ and $C_3(r)$ in \eqref{thm:per-step-err:e1:s3}.

Summing up \eqref{thm:per-step-err:e1} over all $v \in \mathcal{V}$ and reorganizing terms gives
\begin{align}\label{thm:per-step-err:e2}
    &\sum_{v \in \mathcal{V}} \norm{x_t^v - (x_{t \mid t-1}^v)^*}^2\nonumber\\
    \leq{}& \frac{2 C_1^2 h(r)}{1 - \rho_T} \cdot \rho_S^{2r} \sum_{v \in \mathcal{V}} \left(\sum_{\tau = t}^{t+k-1} \rho_T^{\tau - t} \sum_{u \in \partial N_{v}^{r} } \norm{(x_{\tau\mid t-1}^u)^* - \minimizer_\tau^u}^2\right)\nonumber\\
    &+ 2C_1^2 C_3(r) \cdot \rho_T^{2(k-1)} \sum_{v \in \mathcal{V}} \left(\sum_{u \in N_v^r} \rho_S^{d_\mathcal{G}(u, v)} \norm{(x_{t+k-1\mid t-1}^u)^* - \minimizer_{t+k-1}^u}^2\right)\nonumber\\
    \leq{}& \frac{2 C_1^2 h(r)^2}{1 - \rho_T} \cdot \rho_S^{2r} \sum_{\tau = t}^{t+k-1} \rho_T^{\tau - t} \norm{x_{\tau \mid t-1}^* - \minimizer_\tau}^2 + 2C_1^2 C_3(r)^2 \cdot \rho_T^{2(k-1)} \norm{x_{t+k-1 \mid t-1}^* - \minimizer_{t+k-1}}^2,
\end{align}
where we used the facts that
\begin{align*}
    \sum_{v \in \mathcal{V}}\sum_{u \in \partial N_v^r} \norm{(x_{\tau\mid t-1}^u)^* - \minimizer_\tau^u}^2 \leq{} h(r) \sum_{v \in \mathcal{V}}\norm{(x_{\tau\mid t-1}^v)^* - \minimizer_\tau^v}^2 = h(r)\cdot \norm{x_{\tau\mid t-1}^* - \theta_\tau}^2,
\end{align*}
and
\begin{align*}
    \sum_{v \in \mathcal{V}}\sum_{u \in \partial N_v^r} \rho_S^{d_\mathcal{G}(u, v)} \norm{(x_{t+k-1\mid t-1}^u)^* - \minimizer_{t+k-1}^u}^2 \leq{}& C_3(r) \sum_{v \in \mathcal{V}} \norm{(x_{t+k-1\mid t-1}^v)^* - \minimizer_{t+k-1}^v}^2\\
    ={}& C_3(r)\cdot \norm{x_{t+k-1 \mid t-1}^* - \minimizer_{t+k-1}}^2.
\end{align*}

We also note that by the principle of optimality, the following equations hold for all $\tau \geq t$:
\begin{align*}
    x_{\tau\mid t-1}^* &= \tilde{\psi}_{t}\left(x_{t-1} \right)_{\tau},\\
    x_\tau^* &= \tilde{\psi}_{t}\left(x_{t-1}^*\right)_{\tau}.
\end{align*}
Recall that $C_0 := \max\{1, C_G\}$. By \Cref{thm:global-exp-decay}, we see that
\begin{align}\label{thm:per-step-err:e2-1}
    \norm{x_{\tau\mid t-1}^* - x_\tau^*} \leq C_0 \rho_G^{\tau - t + 1} \norm{x_{t-1} - x_{t-1}^*},
\end{align}
which implies
\begin{subequations}\label{thm:per-step-err:e2-2}
\begin{align}
    \norm{x_{\tau \mid t-1}^* - \minimizer_\tau}^2 \leq{}& 2\norm{x_{\tau \mid t-1}^* - x_\tau^*}^2 + 2 \norm{x_\tau^* - \minimizer_\tau}^2\label{thm:per-step-err:e2-2:s1}\\
    \leq{}& 2 C_0^2 \rho_G^{2(\tau - t + 1)} \norm{x_{t-1} - x_{t-1}^*}^2 + 2 \norm{x_\tau^* - \minimizer_\tau}^2, \label{thm:per-step-err:e2-2:s2}
\end{align}
\end{subequations}
where we used the triangle inequality and the AM-GM inequality in \eqref{thm:per-step-err:e2-2:s1}; we used \eqref{thm:per-step-err:e2-1} in \eqref{thm:per-step-err:e2-2:s2}.

Substituting \eqref{thm:per-step-err:e2-2} into \eqref{thm:per-step-err:e2} gives
\begin{align}\label{thm:per-step-err:e3}
    &\sum_{v \in \mathcal{V}} \norm{x_t^v - (x_{t \mid t-1}^v)^*}^2\nonumber\\
    \leq{}& 4 C_1^2 C_0^2 \left(\frac{h(r)^2 \rho_G^2}{(1 - \rho_T)(1 - \rho_G^2 \rho_T)}\cdot \rho_S^{2r} + C_3(r)^2 \cdot \rho_T^{2(k-1)} \cdot \rho_G^{2k}\right) \norm{x_{t-1} - x_{t-1}^*}^2\nonumber\\
    &+ 4 C_1^2 \left(\frac{ h(r)^2}{1 - \rho_T} \cdot \rho_S^{2r} \sum_{\tau = t}^{t+k-1} \rho_T^{\tau - t} \norm{x_{\tau }^* - \minimizer_\tau}^2 + C_3(r)^2 \cdot \rho_T^{2(k-1)} \norm{x_{t+k-1}^* - \minimizer_{t+k-1}}^2\right)\nonumber\\
    \leq{}& 4 C_1^2 C_0^2 \left(\frac{h(r)^2 \rho_G^2}{(1 - \rho_T)(1 - \rho_G^2 \rho_T)}\cdot \rho_S^{2r} + C_3(r)^2 \cdot \rho_T^{2(k-1)} \cdot \rho_G^{2k}\right) \norm{x_{t-1} - x_{t-1}^*}^2\nonumber\\
    &+ \frac{8 C_1^2}{\mu} \left(\frac{ h(r)^2}{1 - \rho_T} \cdot \rho_S^{2r} \sum_{\tau = t}^{t+k-1} \rho_T^{\tau - t} f_\tau(x_\tau^*) + C_3(r)^2 \cdot \rho_T^{2(k-1)} f_{t+k-1}(x_{t+k-1}^*)\right),
\end{align}
where we used the fact that the node cost function $f_\tau^v$ is non-negative and $\mu$-strongly convex for all $\tau, v$, thus
\[f_\tau(x_\tau^*) \geq \sum_{v \in \mathcal{V}} f_\tau^v((x_\tau^v)^*) \geq \frac{\mu}{2} \sum_{v \in \mathcal{V}}\norm{(x_\tau^v)^* - \theta_\tau^v}^2 = \frac{\mu}{2}\norm{x_\tau^* - \theta_\tau}^2.\]

Note that $\sum_{v \in \mathcal{V}} \norm{x_t^v - (x_{t \mid t-1}^v)^*}^2 = \norm{x_t - x_{t\mid t-1}^*}^2 = e_t^2$. Thus we have finished the proof of \eqref{thm:per-step-err:e0}.

\subsection{Proof of Theorem \ref{coro:CR-LPC}}\label{apx:coro:CR-LPC}
In this section, we show \Cref{coro:CR-LPC} holds with the following specific constants:
\begin{align}\label{coro:CR-LPC:e1}
    1 + \left(1 + \frac{32 C_0^2 C_1^2 (\ell_f + \Delta \ell_S + 2\ell_T)\cdot h(r)^2}{\mu (1 - \rho_G)^2 (1 - \rho_T)^2}\right) \cdot \rho_S^r + \left(1 + \frac{32 C_0^2 C_1^2 (\ell_f + \Delta \ell_S + 2\ell_T) C_3(r)^2}{\mu (1 - \rho_G)^2}\right) \rho_T^{k-1}.
\end{align}
under the assumption that
\begin{align}\label{coro:CR-LPC:e2-0}
    \frac{4 C_1^2 C_0^4}{(1 - \rho_G)^2}\left(\frac{h(r)^2 \rho_G^2}{(1 - \rho_T)(1 - \rho_G^2 \rho_T)}\cdot \rho_S^{2r} + C_3(r)^2 \cdot \rho_T^{2(k-1)} \cdot \rho_G^{2k}\right) \leq \frac{1}{2}.
\end{align}

Recall that $C_0$ is defined in \Cref{thm:alg-perf-bound-err-inject}. Note that \Cref{thm:networked-exp-decay} and \Cref{thm:global-exp-decay} hold under Assumption \ref{assump:costs-and-feasible-sets}. One can check that $C_0, C_1, (1 - \rho_G)^{-1},$ and $(1 - \rho_T)^{-1}$ are bounded by polynomials of $\ell_f/\mu, \ell_T/\mu,$ and $(\Delta \ell_S)/\mu$.

In the proof, we need to use Lemma F.2 in \citet{lin2021perturbation} to bound LPC's total cost by a weighted sum of the offline optimal cost and the sum of squared distances between their trajectories. For completeness, we present Lemma F.2 in \citet{lin2021perturbation} below:

\begin{lemma}\label{lemma:smooth-difference}
For a fixed dimension $m \in \mathbb{Z}_+$, assume a function $h: \mathbb{R}^m \to \mathbb{R}_{\geq 0}$ is convex, $\ell$-smooth and continuously differentiable. For all $x, y \in \mathbb{R}^m$, for all $\eta > 0$, we have
\[h(x) \leq (1 + \eta) h(y) + \frac{\ell}{2}\left(1 + \frac{1}{\eta}\right)\norm{x - y}^2.\]
\end{lemma}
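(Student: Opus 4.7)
The plan is to chain three standard ingredients: the smoothness inequality, Young's inequality applied to the linear term, and the self-bounding property of non-negative smooth convex functions. First I would write down the smoothness bound
\[
h(x) \;\leq\; h(y) + \langle \nabla h(y), x - y \rangle + \frac{\ell}{2}\norm{x-y}^2,
\]
which is immediate from the $\ell$-smoothness assumption. The remaining task is to absorb the cross term $\langle \nabla h(y), x-y\rangle$ into a combination of $h(y)$ and $\norm{x-y}^2$ with the exact constants in the statement.

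For this, I would apply Young's inequality with a free parameter $\alpha > 0$:
\[
\langle \nabla h(y), x - y \rangle \;\leq\; \frac{1}{2\alpha}\norm{\nabla h(y)}^2 + \frac{\alpha}{2}\norm{x-y}^2.
\]
The next key step is to replace $\norm{\nabla h(y)}^2$ by a multiple of $h(y)$, which is possible precisely because $h$ is non-negative, convex, and $\ell$-smooth. Specifically, applying the smoothness bound at the point $y - \tfrac{1}{\ell}\nabla h(y)$ gives
\[
0 \;\leq\; h\!\left(y - \tfrac{1}{\ell}\nabla h(y)\right) \;\leq\; h(y) - \frac{1}{2\ell}\norm{\nabla h(y)}^2,
\]
so $\norm{\nabla h(y)}^2 \leq 2\ell\, h(y)$. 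This is the only place where non-negativity is used and where the gradient term gets controlled by a function value; it is the crux of the argument.

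Substituting back yields
\[
h(x) \;\leq\; h(y) + \frac{\ell}{\alpha} h(y) + \frac{\alpha + \ell}{2}\norm{x-y}^2 \;=\; \Bigl(1 + \frac{\ell}{\alpha}\Bigr)h(y) + \frac{\ell}{2}\Bigl(1 + \frac{\alpha}{\ell}\Bigr)\norm{x-y}^2.
\]
Finally, I would set $\alpha = \ell/\eta$ so that $\ell/\alpha = \eta$ and $\alpha/\ell = 1/\eta$, matching the target inequality exactly. No step here is a serious obstacle; the only subtlety is recognizing that the self-bounding property $\norm{\nabla h}^2 \leq 2\ell\, h$ (rather than a Lipschitz-gradient bound alone) is what lets the right-hand side scale with $h(y)$ instead of an additive constant, which is essential to get the multiplicative factor $(1+\eta)$ in the statement.
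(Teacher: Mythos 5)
Your proof is correct: the chain of $\ell$-smoothness, Young's inequality with parameter $\alpha$, and the self-bounding property $\norm{\nabla h(y)}^2 \leq 2\ell\, h(y)$ (which follows from non-negativity and smoothness alone) gives exactly the stated inequality after setting $\alpha = \ell/\eta$. The paper does not prove this lemma itself but imports it verbatim as Lemma F.2 of \citet{lin2021perturbation}, whose proof is the same standard argument, so your route matches the intended one.
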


Now we come back to the proof of \Cref{coro:CR-LPC}. We first bound the sum of squared distances between LPC's trajectory and the offline optimal trajectory:
\begin{subequations}\label{coro:CR-LPC:e2}
\begin{align}
    \sum_{t=1}^\horizonlength \norm{x_t - x_t^*}^2 \leq{}& \frac{C_0^2}{(1 - \rho_G)^2} \sum_{t=1}^{\horizonlength} e_t^2\label{coro:CR-LPC:e2:s1}\\
    \leq{}& \frac{4 C_1^2 C_0^4}{(1 - \rho_G)^2}\left(\frac{h(r)^2 \rho_G^2}{(1 - \rho_T)(1 - \rho_G^2 \rho_T)}\cdot \rho_S^{2r} + C_3(r)^2 \cdot \rho_T^{2(k-1)} \cdot \rho_G^{2k}\right) \sum_{t=1}^\horizonlength \norm{x_{t-1} - x_{t-1}^*}^2\nonumber\\
    &+ \frac{8 C_0^2 C_1^2}{\mu (1 - \rho_G)^2} \sum_{t=1}^\horizonlength \left(\frac{ h(r)^2}{1 - \rho_T} \cdot \rho_S^{2r} \sum_{\tau = t}^{t+k-1} \rho_T^{\tau - t} f_\tau(x_\tau^*) + C_3(r)^2 \cdot \rho_T^{2(k-1)} f_{t+k-1}(x_{t+k-1}^*)\right),\label{coro:CR-LPC:e2:s2}
\end{align}
\end{subequations}
where we used \Cref{thm:alg-perf-bound-err-inject} in \eqref{coro:CR-LPC:e2:s1}; we used \Cref{thm:per-step-err} with the specific constants given in Appendix \ref{apx:thm:per-step-err} in \eqref{coro:CR-LPC:e2:s2}.

Recall that in \eqref{coro:CR-LPC:e2-0}, we assume $r$ and $k$ are sufficient large so that the coefficient of the first term in \eqref{coro:CR-LPC:e2} satisfies
\[\frac{4 C_1^2 C_0^4}{(1 - \rho_G)^2}\left(\frac{h(r)^2 \rho_G^2}{(1 - \rho_T)(1 - \rho_G^2 \rho_T)}\cdot \rho_S^{2r} + C_3(r)^2 \cdot \rho_T^{2(k-1)} \cdot \rho_G^{2k}\right) \leq \frac{1}{2}.\]
Substituting this bound into \eqref{coro:CR-LPC:e2} gives that
\begin{align}\label{coro:CR-LPC:e3}
    \sum_{t=1}^\horizonlength \norm{x_t - x_t^*}^2 \leq \frac{16 C_0^2 C_1^2}{\mu (1 - \rho_G)^2} \left(\frac{h(r)^2}{(1 - \rho_T)^2}\cdot \rho_S^{2r} + C_3(r)^2 \cdot \rho_T^{2(k-1)}\right)\cdot \sum_{t=1}^\horizonlength f_t(x_t^*).
\end{align}
By \Cref{lemma:smooth-difference}, since $f_t$ is $(\ell_f + \Delta \ell_S)$-smooth, convex, and non-negative on $\mathbb{R}^n$, and $c_t$ is $\ell_T$-smooth, convex, and non-negative on $\mathbb{R}^n \times \mathbb{R}^n$, we know that
\begin{align}\label{coro:CR-LPC:e4-0}
    f_t(x_t) &\leq (1 + \eta) f_t(x_t^*) + \frac{\ell_f + \Delta \ell_S}{2}\left(1 + \frac{1}{\eta}\right)\norm{x_t - x_t^*}^2\nonumber\\
    c_t(x_t, x_{t-1}) & \leq (1 + \eta) c_t(x_t^*, x_{t-1}^*) + \frac{\ell_T}{2}\left(1 + \frac{1}{\eta}\right)\left(\norm{x_t - x_t^*}^2 + \norm{x_{t-1} - x_{t-1}^*}^2\right)
\end{align}
holds for any $\eta > 0$. Summing the above inequality over $t$ gives
\begin{align}\label{coro:CR-LPC:e4}
    &\sum_{t=1}^\horizonlength \left(f_t(x_t) + c_t(x_t, x_{t-1})\right)\nonumber\\
    \leq{}& (1+\eta)\sum_{t=1}^\horizonlength \left(f_t(x_t^*) + c_t(x_{t}^*, x_{t-1}^*)\right) + \frac{(\ell_f + \Delta \ell_S + 2\ell_T)}{2}\left(1 + \frac{1}{\eta}\right)\sum_{t=1}^\horizonlength \norm{x_t - x_t^*}^2\nonumber\\
    \leq{}& (1+\eta) cost(OPT) + \left(1 + \frac{1}{\eta}\right) \frac{16 C_0^2 C_1^2 (\ell_f + \Delta \ell_S + 2\ell_T)}{\mu (1 - \rho_G)^2} \left(\frac{h(r)^2}{(1 - \rho_T)^2}\cdot \rho_S^{2r} + C_3(r)^2 \cdot \rho_T^{2(k-1)}\right)\cdot cost(OPT),
\end{align}
where we used \eqref{coro:CR-LPC:e3} and $\sum_{t=1}^\horizonlength f_t(x_t^*) \leq cost(OPT)$ in the last inequality. Setting $\eta = \rho_S^r + \rho_T^{k-1}$ in \eqref{coro:CR-LPC:e4} finishes the proof of \eqref{coro:CR-LPC:e1}.

As a remark, we require the local cost function $\left(f_t^v, c_t^v, s_t^e\right)$ to be non-negative, convex, and smooth in the whole Euclidean spaces $\left(\bR^n, \bR^n \times \bR^n, \bR^n \times \bR^n\right)$ in Assumption \ref{assump:costs-and-feasible-sets} because we want to apply \Cref{lemma:smooth-difference} in \eqref{coro:CR-LPC:e4-0}.

\subsection{Proof of Corollary \ref{coro:pure-exp-decay}}\label{apx:coro:pure-exp-decay}
We first show $\Delta^2 \rho_S \leq \sqrt{\rho_S}$ holds under \Cref{assump:costs-and-feasible-sets} and the assumptions that $\frac{\ell_S}{\mu} \leq \frac{1}{\Delta^{7}}, \frac{\ell_T}{\mu} \leq \frac{1}{16}$. To see this, note that as we discussed in Section \ref{sec:main:perturb}, by setting $b_1 = 2\Delta -1$ and $b_2 = 4\Delta^2 - 2\Delta$, \Cref{coro:CR-LPC} holds with
\begin{align*}
    \rho_S = \frac{4\Delta^2 (\sqrt{1 + \Delta \ell_S/\mu} - 1)}{\sqrt{1 + \Delta \ell_S/\mu} + 1}.
\end{align*}
Hence we see that
\[\Delta^2 \sqrt{\rho_S} = 2\Delta^3 \left(\frac{\sqrt{1 + (\Delta \ell_S/\mu)} - 1}{\sqrt{1 + (\Delta \ell_S/\mu)} + 1}\right)^{\frac{1}{2}} \leq 2\Delta^3 \left(\frac{\sqrt{1 + \Delta^{-6}} - 1}{2}\right)^{\frac{1}{2}} \leq 1,\]
which implies that
\begin{align}\label{coro:pure-exp-decay:e1}
    \Delta^2 \rho_S \leq \sqrt{\rho_S}.
\end{align}

Recall that function $C_3(r) := \sum_{\gamma = 0}^{r} h(\gamma) \cdot \rho_S^\gamma$. Hence we see that
\begin{align}\label{coro:pure-exp-decay:e2}
    C_3(r) \leq \sum_{\gamma = 0}^{r} \Delta^\gamma \cdot \rho_S^\gamma \leq \sum_{\gamma = 0}^{r} \left(\frac{\sqrt{\rho_S}}{\Delta}\right)^\gamma \leq \frac{\Delta}{\Delta - \sqrt{\rho_S}}.
\end{align}

Substituting \eqref{coro:pure-exp-decay:e1} and \eqref{coro:pure-exp-decay:e2} into the competitive ratio bound in \eqref{coro:CR-LPC:e1} shows that the competitive ratio of LPC is upper bound by
\[1 + \left(1 + \frac{32 C_0^2 C_1^2 (\ell_f + \Delta \ell_S + 2\ell_c)}{\mu (1 - \rho_G)^2 (1 - \rho_T)^2}\right) \cdot \rho_S^{\frac{r}{2}} + \left(1 + \frac{32 C_0^2 C_1^2 (\ell_f + \Delta \ell_S + 2\ell_c)\Delta^2}{\mu (1 - \rho_G)^2 (\Delta - \sqrt{\rho_S})^2}\right) \rho_T^{k-1}.\]

\section{Proof of Theorem \ref{thm:bottleneck}}\label{apx:thm:bottleneck}

In this appendix we prove a lower bound on the competitive ratio of any online algorithm.  Our proof focuses on temporal and spatial lower bounds separately first, and then combines them.  

\subsubsection*{Step 1: Temporal Lower Bounds}\label{apx:lower-bound:temporal}
We first show that the competitive ratio of any online algorithm with $k$ steps of future predictions is lower bounded by $1 + \Omega(\lambda_T^k)$. To show this, we consider the special case when there are no spatial interaction costs (i.e., $s_t^e \equiv 0$ for all $t$ and $e$). In this case, since all agents are independent with each other, it suffices to assume there is only one agent in the network $\mathcal{G}$. Thus we will drop the agent index in the following analysis. To further simplify the problem, we assume dimension $n = 1$, $c_t(x_t, x_{t-1}) = \frac{\ell_T}{2}(x_t - x_{t-1})^2$, and the feasible set is $D_t \equiv D = [0, 1]$ for all $t$. Let $R$ denote the diameter of $D$, i.e., $R = \sup_{x, y \in D}\abs{x - y} = 1$.

By Theorem 2 in \citet{li2020online} and Case 1 in its proof, we know that for any online algorithm $ALG$ with $k$ steps of future predictions and $L_T \in (2R, R\horizonlength)$, there exists a problem instance with quadratic functions $f_1, f_2, \ldots, f_\horizonlength$ that have the form $f_t(x_t) = \frac{\mu}{2}(x_t - \minimizer_t)^2, \minimizer_t \in D$ such that
\begin{align}\label{thm:bottleneck:e1}
    cost(ALG) - cost(OPT) \ge \frac{\mu^3(1 - \sqrt{\lambda_T})^2}{96(\mu + 1)^2}\cdot \lambda_T^k\cdot R\cdot L_\horizonlength,
\end{align}
where $L_\horizonlength \geq \sum_{t=1}^\horizonlength \abs{\minimizer_t - \minimizer_{t-1}}$. Note that
\begin{align*}
    R \cdot L_T \geq{}& \sum_{t=1}^\horizonlength \abs{v_t - v_{t-1}}^2\\
    ={}& \frac{2}{\ell_T} \cdot \sum_{t=1}^\horizonlength \left(f_t(v_t) + c_t(v_t, v_{t-1})\right)\\
    \geq{}& \frac{2}{\ell_T} \cdot cost(OPT).
\end{align*}
Substituting this into \eqref{thm:bottleneck:e1} gives
\begin{align}\label{thm:bottleneck:e2}
    cost(ALG) \geq \left(1 + \frac{\mu^3(1 - \sqrt{\lambda_T})^2}{48(\mu + 1)^2 \ell_T}\cdot \lambda_T^k\right)\cdot cost(OPT).
\end{align}
Note that \eqref{thm:bottleneck:e1} implies $cost(ALG) > 0$, hence the competitive ratio can be unbounded if $cost(OPT) = 0$.

\subsubsection*{Step 2: Spatial Lower Bounds}\label{apx:lower-bound:spatial}
We next show that the competitive ratio of any online algorithm that can communicate within $r$-hop neighborhood according to the scheme defined in Section \ref{sec:look-ahead-and-communication} is lower bounded by $1 + \Omega(\lambda_S^r)$. To show this, we will construct a special Networked OCO instance with random cost functions and show there exists a realization that achieves the lower bound by probabilistic methods.

\begin{theorem}\label{thm:spatial_lower_bound}
Under the assumption that $\Delta \geq 3$, the competitive ratio of any decentralized online algorithm $ALG$ with communication radius $r$ is lower bounded by $1 + \Omega(\lambda_S^r)$, where $\Omega(\cdot)$ notation hides factors that depend polynomially on $1/\mu, \ell_T, \ell_S,$ and $\Delta$, and
\begin{align}\label{thm:spatial_lower_bound:decay_factor}
    \lambda_S = \begin{cases} 
    \frac{(\Delta \ell_S/\mu)}{3 + 3 (\Delta \ell_S/\mu)} & \text{ if } \Delta \ell_S/\mu < 48,\\
    \max\left(\frac{(\Delta \ell_S/\mu)}{3 + 3 (\Delta \ell_S/\mu)}, \left(1 - 4 \sqrt{3} \cdot (\Delta \ell_S/\mu)^{-\frac{1}{2}}\right)^2\right) & \text{ otherwise.}
    \end{cases}
\end{align}
\end{theorem}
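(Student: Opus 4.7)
The plan is to construct a random family of Networked OCO instances, apply Yao's principle, and show that for any decentralized online algorithm with communication radius $r$, the expected gap between its cost and the offline optimum exceeds $\Omega(\lambda_S^r)$ times the optimum. Since we only need a spatial lower bound, I would collapse the problem to a single time step ($\horizonlength = 1$, $c^v \equiv 0$) on a $\Delta$-regular tree $\mathcal{G}$ rooted at a distinguished agent $v_0$ (the hypothesis $\Delta \geq 3$ ensures both that a regular tree exists and that $|\partial N_{v_0}^{r+1}| = \Theta(\Delta^{r+1})$ grows, giving enough ``boundary variance''). Take purely quadratic local costs $f^v(x^v) = \frac{\mu}{2}(x^v - \theta^v)^2$ and $s^e(x^u, x^v) = \frac{\ell_S}{2}(x^u - x^v)^2$. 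Let $\theta^u = 0$ for every $u \in N_{v_0}^r$, let $\theta^u$ be drawn i.i.d.\ uniformly from $\{-M, +M\}$ for $u \in \partial N_{v_0}^{r+1}$ (with $M$ chosen to normalize $cost(OPT)$), and set $\theta^u = 0$ elsewhere. By construction the information seen by $v_0$ (local costs inside $N_{v_0}^r$ and its own previous action) is deterministic and independent of the random boundary, so $v_0$'s action is independent of $\theta^{\partial N_{v_0}^{r+1}}$.

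Next I would analyze the offline optimum. The first-order optimality conditions give the closed form $x^* = (\mu I + \ell_S L)^{-1}\mu \theta$, where $L$ is the graph Laplacian restricted to $\mathcal{G}$ (with suitable decay at infinity). On a $\Delta$-regular tree, the Green's function $G_{v_0, u} := ((\mu I + \ell_S L)^{-1})_{v_0, u}$ admits a one-parameter recursion along the unique path from $v_0$ to $u$, solving a quadratic characteristic equation whose root is precisely the per-hop decay rate that produces $\lambda_S$. In the weak-interaction regime, the root has magnitude of order $(\Delta \ell_S/\mu)/(1 + \Delta \ell_S/\mu)$ per two hops, which upon simplification matches the first branch $\lambda_S = \frac{\Delta \ell_S/\mu}{3 + 3\Delta \ell_S/\mu}$ of the statement. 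This yields $(x^{*,v_0})^2 \gtrsim \lambda_S^{r+1}\cdot M^2 \cdot |\partial N_{v_0}^{r+1}|$ in expectation (using Pythagoras and independence), and $cost(OPT) = O(\mu M^2 |\partial N_{v_0}^{r+1}|)$.

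For the lower bound on the competitive ratio, I would use that the algorithm's action $x^{ALG,v_0}$ is independent of $\theta^{\partial N_{v_0}^{r+1}}$, so $\mathbb{E}(x^{ALG,v_0} - x^{*,v_0})^2 \geq \mathrm{Var}(x^{*,v_0})$ by the $L^2$ projection inequality. Strong convexity of $f^{v_0}$ plus the positive-semidefiniteness of the remaining spatial terms gives $\mathbb{E}[cost(ALG)] - \mathbb{E}[cost(OPT)] \geq \frac{\mu}{2}\mathbb{E}(x^{ALG,v_0} - x^{*,v_0})^2$, after which the ratio becomes $\Omega(\lambda_S^r)$. Yao's principle then extracts a deterministic instance achieving the bound.

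The main obstacle will be the large-interaction regime $\Delta \ell_S/\mu \geq 48$, where the tree construction degrades and the stronger branch $(1 - 4\sqrt{3}(\Delta \ell_S/\mu)^{-1/2})^2$ is required. Here I would replace the tree by a carefully chosen thin subgraph (for instance a long path embedded into the graph by using only one of each vertex's $\Delta$ neighbors), so that the one-dimensional chain analysis applies: the Green's function on a path satisfies a continued-fraction recursion whose per-hop decay tends to $1 - \Theta(\sqrt{\mu/\ell_S})$, squaring to the second branch. Matching the precise constants $4\sqrt{3}$ and $3 + 3(\Delta\ell_S/\mu)$ of the statement, and choosing $M$ and the cutoff $\Delta \ell_S/\mu = 48$ so that the two constructions interpolate continuously, is the main bookkeeping difficulty; an additional subtlety is ruling out instances where $cost(OPT) = 0$ (handled by the strict positivity of the boundary costs whenever $M > 0$).
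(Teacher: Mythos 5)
Your overall skeleton matches the paper's: a one-step instance with quadratic node and edge costs, randomized node-cost minimizers, the observation that an agent's action can depend only on the costs inside its $r$-hop neighborhood so that the conditional-expectation (projection) inequality lower-bounds its squared error by the conditional variance of the offline optimum, a closed form $x^* = (I + (\ell_S/\mu)L)^{-1}\theta$, strong convexity to convert squared error into a cost gap, and an averaging argument to extract a deterministic instance. Restricting attention to a single agent $v_0$ rather than summing over all agents, as the paper does, is a harmless simplification.

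The genuine gap is in the choice of graph, and it is exactly the point where the stated constant $\lambda_S$ is won or lost. On a $\Delta$-regular tree there is a \emph{unique} shortest walk between two vertices at distance $\kappa$, so the Neumann series for $(I+(\ell_S/\mu)L)^{-1} = \frac{1}{1+\Delta\ell_S/\mu}\sum_t\left(\frac{\ell_S/\mu}{1+\Delta\ell_S/\mu}\right)^t A^t$ gives a per-hop decay of order $\frac{\ell_S/\mu}{1+\Delta\ell_S/\mu}$ for each individual Green's-function entry (this is also what the exact radial recursion yields: the decaying root of $\ell(\Delta-1)z^2-(1+\Delta\ell)z+\ell=0$ is $\approx \ell/(1+\Delta\ell)$ for small $\ell$, not the ``per two hops'' rate you assert). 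After squaring and summing over the $\Theta(\Delta(\Delta-1)^r)$ boundary vertices, the variance of $x^{*,v_0}$ decays like $\left(\frac{(\Delta-1)(\ell_S/\mu)^2}{(1+\Delta\ell_S/\mu)^2}\right)^r$, which in the weak-coupling regime is $\Theta\bigl((\ell_S/\mu)^{2r}\bigr)$ — quadratically, not linearly, small in $\ell_S/\mu$ per hop — and therefore does not reproduce the first branch $\lambda_S = \frac{\Delta\ell_S/\mu}{3+3\Delta\ell_S/\mu}$ you claim to match. The paper's construction is engineered precisely to avoid this: it arranges the agents in a ring of blocks of $d=\lfloor\Delta/2\rfloor$ vertices joined by complete bipartite graphs, so that there are $d^{\kappa-1}$ shortest walks between vertices at distance $\kappa$; this walk multiplicity boosts each individual entry of the Green's function from $(\ell/(2d\ell+1))^\kappa$ to roughly $d^{-1}(d\ell/(2d\ell+1))^\kappa$, which is where the factor $\Delta$ in the numerator of $\lambda_S$ comes from. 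The same issue recurs in your strong-coupling fix: a thin embedded path gives a per-hop rate $1-\Theta((\ell_S/\mu)^{-1/2})$, missing the $\Delta$ inside the square root of the second branch; the paper instead keeps the block structure and extracts the large-$\ell$ rate from a Stirling estimate of $\binom{\kappa+2m}{m}$ in the walk count. So your argument does establish a lower bound of the form $1+\Omega(\tilde\lambda_S^r)$, but with a strictly smaller $\tilde\lambda_S$ than the theorem states; to get the stated decay factor you need a graph with exponentially many shortest walks between distant vertices, not a tree.
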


\begin{proof}[Proof of \Cref{thm:spatial_lower_bound}]
In the proof, we assume the online game only lasts one time step before it ends, i.e., $H = 1$. Note that when $H > 1$, the same counterexample can be constructed repeatedly by letting the temporal interaction costs $c_t^v \equiv 0$ for every agent $v$ and time step $t$. To simplify the notation, we define $\ell := \ell_S/\mu$ and $d := [\Delta/2]$. Without the loss of generality, we assume $\mathcal{V} = \{1, 2, \cdots, n\}$ so that each agent has a positive integer index.

We consider the case where the node cost function for each agent $i$ is $(x_i + w_i)^2$ and the spatial interaction cost between two neighboring agents $i$ and $j$ is $\ell(x_i - x_j)^2$. Here, $x_i \in \mathbb{R}$ is the scalar action of agent $i$, and parameter $w_i \in \mathbb{R}$ is a local information that corresponds to agent $i$. The parameters $\{w_i\}_{i=1}^n$ are sampled i.i.d. from some distribution $\mathcal{D}$, which we will discuss later.

For a general graph $\mathcal{G} = (\mathcal{V}, \mathcal{E})$ of agents, let $L$ denote its graph Laplacian matrix. Recall that the graph Laplacian matrix $L \in \mathcal{V} \times \mathcal{V}$ is a symmetric $n \times n$ matrix and it is defined as
\[L_{i, j} = \begin{cases}
deg(i) & \text{ if }i = j,\\
-1 & \text{ if } i \not= j \text{ and } (i, j) \in \mathcal{E},\\
0 & \text{ otherwise,}
\end{cases}\]
for agents $i, j \in \mathcal{V}$. Here $deg(\cdot)$ denotes the degree of an agent in graph $\mathcal{G}$. We know that $L$ is a symmetric semi-definite positive semi-definite and has bandwidth $1$ w.r.t. to $\mathcal{G}$. The centralized optimization problem can be expressed as
\begin{align*}
    cost(OPT) &= \min_{x \in \mathbb{R}^n} (x + w)^\top (x + w) + \ell \cdot x^\top L x\\
    &= \min_{x \in \mathbb{R}^n} \norm{(I + \ell \cdot L)^{\frac{1}{2}} x + (I + \ell\cdot L)^{-\frac{1}{2}} w}^2 + w^\top (I - (I + \ell\cdot L)^{-1}) w\\
    &= w^\top (I - (I + \ell\cdot L)^{-1}) w,
\end{align*}
where the minimum is attained at $x^* = (I + \ell \cdot L)^{-1} w$.

When each agent $i$ only has communication radius $r$, it can only observe the part of $w$ that is within $N_i^r$. To simplify the notation, we define the mask operator $\phi_S: \mathbb{R}^n \to \mathbb{R}^n$ w.r.t. a set $S \subseteq \mathcal{V}$ as
\[\phi_S(w)_i = \begin{cases}
w_i & \text{ if } i \in S,\\
0 & \text{ otherwise,}
\end{cases}\]
for $i \in \mathcal{V}$. The local policy of agent $i$ (denote as $\pi_i$) is a mapping from $w_{N_i^r}$ to the local decision $x_i$.

Suppose the distribution $\mathcal{D}$ of each local parameters $w_i$ is a mean-zero distribution with support on $\mathbb{R}$. For every agent $i \in \mathcal{V}$, we see that
\begin{subequations}\label{thm:spatial_lower_bound:e1}
\begin{align}
    \mathbb{E}_w \abs{x_i(w) - x_i^*(w)}^2 &={} \min_{\pi_i} \mathbb{E}_w \abs{\pi_i(w_{N_i^r}) - x_i^*(w)}^2\nonumber\\
    &\geq{} \mathbb{E}_w \abs{\mathbb{E}[x_i^*(w)\mid w_{N_i^r}] - x_i^*(w)}^2\label{thm:spatial_lower_bound:e1:s1}\\
    &={} \mathbb{E}_w \abs{\mathbb{E}[\left((I + \ell \cdot L)^{-1} w\right)_i \mid w_{N_i^r}] - \left((I + \ell\cdot L)^{-1} w\right)_i}^2\nonumber\\
    &={} \mathbb{E}_w \abs{\left((I + \ell \cdot L)^{-1} \phi_{N_i^r}(w)\right)_i - \left((I + \ell \cdot L)^{-1} w\right)_i}^2\label{thm:spatial_lower_bound:e1:s2}\\
    &={} \mathbb{E}_w \abs{\left((I + \ell \cdot L)^{-1} \phi_{N_{-i}^r}(w)\right)_i}^2, \label{thm:spatial_lower_bound:e1:s3}
\end{align}
\end{subequations}
where we use the fact that conditional expectations minimize the mean square prediction error in \eqref{thm:spatial_lower_bound:e1:s1}; we use the requirement that the distribution of $w$ is mean-zero in \eqref{thm:spatial_lower_bound:e1:s2}.

To bound the variance term in \eqref{thm:spatial_lower_bound:e1:s3}, we need the following lemma to lower bound the magnitude of every entry in the exponential decaying matrix $(I + \ell \cdot L)^{-1}$:

\begin{lemma}\label{lemma:inverse-graph-lower-bound}
There exists a finite graph $\mathcal{G}$ with maximum degree $2d$ that satisfies the following conditions: For any two vertices $i, j$ such that $d_\mathcal{G}(i, j) \geq 3$, the following inequality holds:
\[\left((I + \ell \cdot L)^{-1}\right)_{i j} \geq \frac{d_\mathcal{G}(i, j)}{d^2 (2 d\ell + 1)}\cdot \left(\frac{d\ell}{2 d \ell + 1}\right)^{d_\mathcal{G}(i, j)}.\]
If we make the additional assumption that $\ell > \frac{16}{d}$, we have that
\[\left((I + \ell \cdot L)^{-1}\right)_{i j} \geq \frac{1}{4 \sqrt{\pi \cdot d_\mathcal{G}(i, j) \cdot \sqrt{d\ell}} \cdot d^2 (2d\ell + 1)} \cdot \left(1 - 4 (d \ell)^{-\frac{1}{2}}\right)^{d_\mathcal{G}(i, j)}.\]
\end{lemma}

We defer the proof of \Cref{lemma:inverse-graph-lower-bound} to the end of this section. Note that \Cref{lemma:inverse-graph-lower-bound} implies that there exists a graph $\mathcal{G}$ that satisfies $\left((I + \ell \cdot L)^{-1}\right)_{i, j} = \Omega\left(\lambda_S^r\right)$, where $\Omega(\cdot)$ notation hides factors that depend polynomially on $1/\mu, \ell_T, \ell_S,$ and $\Delta$, and $\lambda_S$ is as defined in \eqref{thm:spatial_lower_bound:decay_factor}. We assume the agents are located in this graph $\mathcal{G}$ for the rest of the proof.

Using \Cref{lemma:inverse-graph-lower-bound}, we can derive the following lower bound of the variance term in \eqref{thm:spatial_lower_bound:e1:s2}:
\begin{align}\label{thm:spatial_lower_bound:e2}
    \mathbb{E}_w \abs{\left((I + \ell \cdot L)^{-1} \phi_{N_{-i}^r}(w)\right)_i}^2
    ={}& \mathbb{E}_w \left(\sum_{j \in N_{-i}^r} \left((I + \ell\cdot L)^{-1}\right)_{i j} w_j\right)^2\nonumber\\
    ={}& \sum_{j \in N_{-i}^r} \left((I + \ell\cdot L)^{-1}\right)_{i j}^2 Var(w_j)\nonumber\\
    \geq{}& \sum_{j \in \partial N_{i}^{r+1}} \left((I + \ell\cdot L)^{-1}\right)_{i j}^2 Var(w_j)\nonumber\\
    \geq{}& \Theta\left(\lambda_S^r \cdot Var(w_i)\right).
\end{align}
Substituting \eqref{thm:spatial_lower_bound:e2} into \eqref{thm:spatial_lower_bound:e1} and summing over all vertices $i$, we obtain that
\begin{align*}
    \mathbb{E}_w\norm{x(w) - x^*(w)}^2 \geq \sum_{i=1}^n \mathbb{E}_w \abs{x_i(w) - x_i^*(w)}^2 \geq \Theta\left(n \cdot \lambda_S^r \cdot Var(w_i)\right).
\end{align*}
We also see that
\begin{align}\label{thm:spatial_lower_bound:e3}
    \mathbb{E}_w [cost(OPT)] = \mathbb{E}_w \left[w^\top (I - (I + \ell \cdot L)^{-1}) w\right] = O(n \cdot Var(w_i)).
\end{align}
Note that the global objective function $(x + w)^\top (x + w) + \ell \cdot x^\top L x$ is $1$-strongly convex, and $x^*(w)$ is minimizer of this function. Thus, we have that for any outcome of $w$,
\[cost(ALG) - cost(OPT) \geq \frac{1}{2}\norm{x(w) - x^*(w)}^2.\]
Taking expectations on both sides w.r.t. $w$ gives that
\begin{align}\label{thm:spatial_lower_bound:e4}
    \mathbb{E}_w cost(ALG) - \mathbb{E}_w cost(OPT) \geq \frac{1}{2}\mathbb{E}_w\norm{x(w) - x^*(w)}^2
    \geq \Theta\left(n \cdot \lambda_S^r \cdot Var(w_i)\right).
\end{align}
Dividing \eqref{thm:spatial_lower_bound:e4} by \eqref{thm:spatial_lower_bound:e3}, we obtain that
\[\frac{\mathbb{E}_w cost(ALG)}{\mathbb{E}_w cost(OPT)} \geq 1 + \Omega\left(\lambda_S^r\right).\]
Note that $\mathbb{P}_w\left[cost(OPT) = 0\right] = 0$. Thus, there must exist an instance of $w$ such that $cost(OPT) > 0$ and
\[\frac{cost(ALG)}{cost(OPT)} \geq 1 + \Omega\left(\lambda_S^r\right).\]

\end{proof}

Before we present the proof of \Cref{lemma:inverse-graph-lower-bound}, we first need to introduce two technical lemmas that will be used in the proof of \Cref{lemma:inverse-graph-lower-bound}. The first lemma (\Cref{lemma:combinatorial-number-near-center}) provides a lower bound for binomial coefficient $\binom{(2+\epsilon)m}{m}$.

\begin{lemma}\label{lemma:combinatorial-number-near-center}
For any positive integer $m$ and $\epsilon \in \mathbb{R}_{\geq 0}$ such that $\epsilon m$ is an integer, the following inequality holds:
\[\binom{(2+\epsilon)m}{m} > \frac{1}{\sqrt{2\pi}} m^{-\frac{1}{2}} \cdot \frac{(2 + \epsilon)^{(2+\epsilon)m+\frac{1}{2}}}{(1 + \epsilon)^{(1+\epsilon)m + \frac{1}{2}}} \cdot  e^{-\frac{1}{6m}}.\]
\end{lemma}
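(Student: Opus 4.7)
The plan is to apply Stirling's approximation (in the sharp two-sided form already quoted earlier in the paper) to each of the three factorials in the expansion
\[
\binom{(2+\epsilon)m}{m} = \frac{((2+\epsilon)m)!}{m!\,((1+\epsilon)m)!}\,,
\]
using the \emph{lower} bound $n! > \sqrt{2\pi n}(n/e)^n e^{1/(12n+1)}$ on the numerator factorial and the \emph{upper} bound $n! < \sqrt{2\pi n}(n/e)^n e^{1/(12n)}$ on the two denominator factorials.

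Writing $a=(2+\epsilon)m$, $b=m$, $c=(1+\epsilon)m$ so that $a=b+c$, the $(n/e)^n$ pieces combine to give $a^a/(b^b c^c)$, and because $m^{(2+\epsilon)m}$ in the numerator exactly cancels $m^m \cdot m^{(1+\epsilon)m}$ in the denominator, this simplifies to $(2+\epsilon)^{(2+\epsilon)m}/(1+\epsilon)^{(1+\epsilon)m}$. The $\sqrt{2\pi n}$ pieces yield
\[
\frac{1}{\sqrt{2\pi}}\sqrt{\frac{a}{bc}} \;=\; \frac{1}{\sqrt{2\pi}}\,m^{-1/2}\,\sqrt{\frac{2+\epsilon}{1+\epsilon}}\,,
\]
and absorbing the $\sqrt{(2+\epsilon)/(1+\epsilon)}$ into the polynomial factors raises the exponents of $(2+\epsilon)$ and $(1+\epsilon)$ by $1/2$, matching the form stated in the lemma.

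It then remains to handle the error term $\exp\bigl(\tfrac{1}{12a+1}-\tfrac{1}{12b}-\tfrac{1}{12c}\bigr)$. I will argue this exponent is bounded below by $-1/(6m)$: since $\epsilon\geq 0$ we have $1/(12c) = 1/(12(1+\epsilon)m) \leq 1/(12m)$, hence $1/(12b)+1/(12c) \leq 1/(6m)$, and the nonnegative term $1/(12a+1)$ only helps. Combining everything yields the claimed strict inequality. This is a direct calculation with no real obstacle; the only thing to be careful about is the bookkeeping of the half-integer exponents on $(2+\epsilon)$ and $(1+\epsilon)$, which arises from merging the square-root prefactor into the main polynomial factor.
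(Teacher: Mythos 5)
Your proposal is correct and follows essentially the same route as the paper's proof: the same two-sided Stirling bound (lower on the numerator factorial, upper on the two denominator factorials), the same cancellation giving the $m^{-1/2}$ prefactor and the half-integer exponents, and the same estimate $\frac{1}{12(2+\epsilon)m+1}-\frac{1}{12m}-\frac{1}{12(1+\epsilon)m} > -\frac{1}{6m}$ for the error term. No gaps.
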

\begin{proof}[Proof of \Cref{lemma:combinatorial-number-near-center}]
By Lemma 2.1 in \cite{stanica2001good}, we know for any $n \in \mathbb{Z}_+$,
\[n! = \sqrt{2\pi} n^{n + \frac{1}{2}} e^{-n+r(n)},\]
where $r(n)$ satisfies $\frac{1}{12 n + 1} < r(n) < \frac{1}{12 n}$. Thus we see that
\[\sqrt{2 \pi} n^{n + \frac{1}{2}} e^{-n+\frac{1}{12n+1}} < n! < \sqrt{2\pi} n^{n + \frac{1}{2}} e^{-n + \frac{1}{12n}}, \forall n \in \mathbb{Z}_+.\]
Therefore, we can lower bound $\binom{(2+\epsilon)m}{m}$ by
\begin{align*}
    \binom{(2+\epsilon)m}{m} ={}& \frac{((2+\epsilon)m)!}{m! \cdot ((1 + \epsilon)m)!}\\
    >{}& \frac{\sqrt{2\pi}((2+\epsilon)m)^{(2+\epsilon)m + \frac{1}{2}} e^{-(2+\epsilon)m + \frac{1}{12(2+\epsilon)m + 1}}}{\sqrt{2\pi}m^{m+\frac{1}{2}} e^{-m+\frac{1}{12m}}\cdot \sqrt{2\pi}((1+\epsilon)m)^{(1+\epsilon)m + \frac{1}{2}} e^{-(1+\epsilon)m + \frac{1}{12(1+\epsilon)m}}}\\
    ={}& \frac{1}{\sqrt{2\pi}} m^{-\frac{1}{2}} \cdot \frac{(2 + \epsilon)^{(2+\epsilon)m+\frac{1}{2}}}{(1 + \epsilon)^{(1+\epsilon)m + \frac{1}{2}}} \cdot  e^{\frac{1}{12(2+\epsilon)m + 1} - \frac{1}{12m} - \frac{1}{12(1+\epsilon)m}}\\
    >{}& \frac{1}{\sqrt{2\pi}} m^{-\frac{1}{2}} \cdot \frac{(2 + \epsilon)^{(2+\epsilon)m+\frac{1}{2}}}{(1 + \epsilon)^{(1+\epsilon)m + \frac{1}{2}}} \cdot  e^{-\frac{1}{6m}}.
\end{align*}
\end{proof}

The second technical lemma (\Cref{lemma:numerical-lower-bound}) will be used to simplify the decay factor in the proof of \Cref{lemma:inverse-graph-lower-bound}.

\begin{lemma}\label{lemma:numerical-lower-bound}
For all $\epsilon \in [0, \sqrt{2})$, the following inequality holds
\[\frac{2 + \epsilon}{2 \cdot (1 + \epsilon)^{\frac{1+\epsilon}{2+\epsilon}}} \geq 1 - \frac{\epsilon^2}{2}.\]
\end{lemma}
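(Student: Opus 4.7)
\textbf{Proof proposal for Lemma \ref{lemma:numerical-lower-bound}.} The plan is to take logarithms on both sides and reduce the claim to a single derivative comparison. Specifically, define
\[
h(\epsilon) \;:=\; \log(2+\epsilon) - \log 2 \;-\; \frac{1+\epsilon}{2+\epsilon}\log(1+\epsilon) \;-\; \log\!\left(1 - \tfrac{\epsilon^2}{2}\right),
\]
so that the desired inequality is equivalent to $h(\epsilon) \geq 0$ on $[0,\sqrt{2})$. Note that $h(0) = 0$, so it will suffice to show $h'(\epsilon) \geq 0$ on $[0,\sqrt{2})$.

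The key calculation I would do first is differentiating the middle two terms. The cross terms in the product rule cancel nicely:
\[
\frac{d}{d\epsilon}\!\left[\log(2+\epsilon) - \tfrac{1+\epsilon}{2+\epsilon}\log(1+\epsilon)\right] = \frac{1}{2+\epsilon} - \frac{\log(1+\epsilon)}{(2+\epsilon)^2} - \frac{1}{2+\epsilon} = -\frac{\log(1+\epsilon)}{(2+\epsilon)^2}.
\]
Combined with $\frac{d}{d\epsilon}[-\log(1-\epsilon^2/2)] = \epsilon/(1-\epsilon^2/2)$, this yields
\[
h'(\epsilon) \;=\; \frac{\epsilon}{1 - \epsilon^2/2} \;-\; \frac{\log(1+\epsilon)}{(2+\epsilon)^2}.
\]

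To show $h'(\epsilon) \geq 0$, I would apply the elementary bound $\log(1+\epsilon) \leq \epsilon$ to reduce to verifying
\[
\frac{\epsilon}{1 - \epsilon^2/2} \;\geq\; \frac{\epsilon}{(2+\epsilon)^2},
\]
which on $[0,\sqrt{2})$ (where $1 - \epsilon^2/2 > 0$) is equivalent to $(2+\epsilon)^2 \geq 1 - \epsilon^2/2$. The latter is immediate since $(2+\epsilon)^2 - (1 - \epsilon^2/2) = 3 + 4\epsilon + \tfrac{3}{2}\epsilon^2 \geq 3 > 0$. Hence $h'(\epsilon) \geq 0$, so $h(\epsilon) \geq h(0) = 0$ on $[0,\sqrt{2})$, and exponentiating recovers the claim.

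The only non-routine step is spotting the cancellation that collapses the derivative of $\log(2+\epsilon) - \tfrac{1+\epsilon}{2+\epsilon}\log(1+\epsilon)$ into the clean expression $-\log(1+\epsilon)/(2+\epsilon)^2$; once that is in hand, everything else is elementary. I do not anticipate genuine obstacles beyond this.
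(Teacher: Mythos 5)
Your proof is correct and follows essentially the same route as the paper's: take logarithms, observe the function vanishes at $\epsilon=0$, and show its derivative is nonnegative using $\log(1+\epsilon)\leq\epsilon$. The only (minor, and arguably cleaner) difference is that the paper first relaxes the coefficient $\tfrac{1+\epsilon}{2+\epsilon}$ to $\tfrac{1+\epsilon}{2}$ before differentiating, whereas you differentiate the exact expression and exploit the cancellation that collapses it to $-\log(1+\epsilon)/(2+\epsilon)^2$.
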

\begin{proof}[Proof of \Cref{lemma:numerical-lower-bound}]
By taking logarithm on both sides, we see the original inequality is equivalent to
\begin{equation*}
    \ln\left(1 + \frac{\epsilon}{2}\right) - \frac{1 + \epsilon}{2 + \epsilon}\ln(1+\epsilon) \geq \ln\left(1 - \frac{1}{2}\epsilon^2\right),
\end{equation*}
which is further equivalent to
\begin{equation}\label{lemma:numerical-lower-bound:e1}
    \ln\left(1 + \frac{\epsilon}{2}\right) - \frac{1 + \epsilon}{2 + \epsilon}\ln(1+\epsilon) - \ln\left(1 - \frac{1}{2}\epsilon^2\right) \geq 0.
\end{equation}
Note that the LHS can be lower bounded by
\begin{equation*}
    \ln\left(1 + \frac{\epsilon}{2}\right) - \frac{1 + \epsilon}{2 + \epsilon}\ln(1+\epsilon) - \ln\left(1 - \frac{1}{2}\epsilon^2\right) \geq \ln\left(1 + \frac{\epsilon}{2}\right) - \frac{1 + \epsilon}{2}\ln(1+\epsilon) - \ln\left(1 - \frac{1}{2}\epsilon^2\right) =: g(\epsilon).
\end{equation*}
Function $g$ satisfies that $g(0) = 0$, and its derivative is
\begin{align*}
    g'(\epsilon) ={}& \frac{1}{2 + \epsilon} - \frac{1}{2} - \frac{1}{2}\ln(1 + \epsilon) + \frac{\epsilon}{1 - \frac{1}{2}\epsilon^2}\\
    \geq{}& \frac{1}{2 + \epsilon} - \frac{1}{2} - \frac{\epsilon}{2} + \epsilon\\
    ={}& \frac{2 - (2+\epsilon)(1-\epsilon)}{2(2+\epsilon)}\\
    ={}& \frac{\epsilon + \epsilon^2}{2(2 + \epsilon)} \geq 0.
\end{align*}
Thus, $g(\epsilon) \geq 0$ for all $\epsilon \in [0, \sqrt{2})$. Hence \eqref{lemma:numerical-lower-bound:e1} holds for all $\epsilon \in [0, \sqrt{2})$.
\end{proof}

Now we are ready to present the proof of \Cref{lemma:inverse-graph-lower-bound}.

\begin{proof}[Proof of \Cref{lemma:inverse-graph-lower-bound}]
\begin{figure}[H]
    \centering
    \includegraphics[width=.9\textwidth]{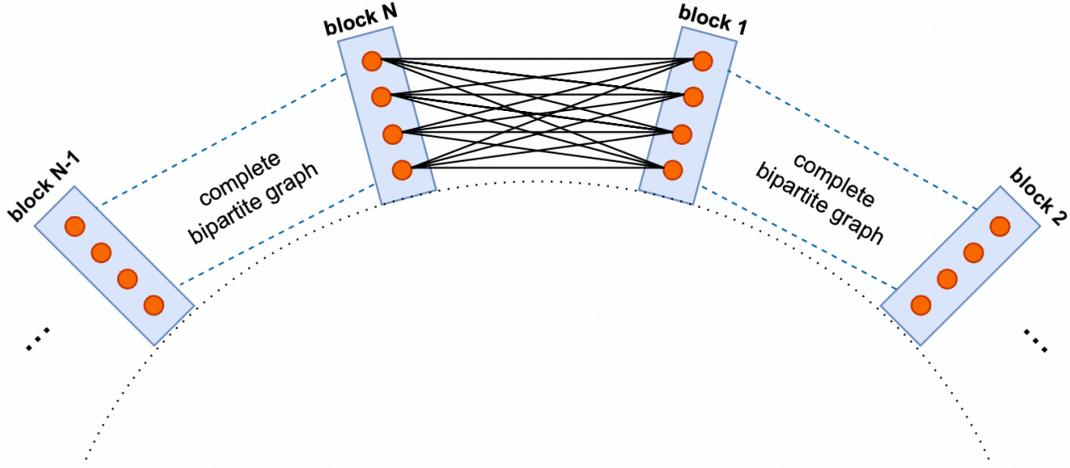}
    \caption{Graph structure of $\mathcal{G}$ to obtain the lower bound: $N$ blocks form a ring. Each block contains $d$ vertices.}
    \label{fig:G_exp_decay_lower}
\end{figure}

Consider the graph $\mathcal{G}$ constructed as Figure \ref{fig:G_exp_decay_lower}: Let $N$ be a positive integer that is sufficiently large. $N$ blocks form a ring, where each block contains $d$ nodes. Every pair of blocks are connected by a complete bipartite graph.
The graph Laplacian of $\mathcal{G}$ can be decomposed as $L = 2d I - M$, where $M$ is the adjacency matrix of $\mathcal{G}$. We see that
\begin{align*}
    (I + \ell \cdot L)^{-1} &= \left((2d \ell + 1)I - \ell \cdot M\right)^{-1}\\
    &= \frac{1}{2d \ell + 1}\left(I - \frac{\ell}{2d \ell + 1}M\right)^{-1}\\
    &= \frac{1}{2d \ell + 1}\sum_{t=0}^\infty \frac{\ell^t}{(2d \ell + 1)^t} M^t
\end{align*}
Fix two vertices $i$ and $j$ and denote $\kappa := d_\mathcal{G}(i, j)$ and assume $\kappa \geq 3$. Without the loss of generality, we can assume $j$ is on the clockwise direction of $i$. We see that
\begin{align}\label{equ:graph-Laplacian-inverse:e1}
    \left((I + \ell \cdot L)^{-1}\right)_{i j} ={}& \frac{1}{2d\ell + 1}\sum_{t=0}^\infty \frac{\ell^t}{(2d\ell + 1)^t} (M^t)_{i j}\nonumber\\
    ={}& \frac{\ell^\kappa}{(2d\ell+1)^{\kappa+1}}\sum_{m=0}^\infty \frac{\ell^{2m}}{(2d\ell + 1)^{2m}} (M^{\kappa + 2m})_{i j}.
\end{align}
Note that $(M^{\kappa + 2m})_{i j}$ denotes the number of paths from $i$ to $j$ with length $\kappa + 2m$ in graph $\mathcal{G}$. Note that the shortest paths from $i$ to $j$ have length $\kappa$. To pick a path with length $(\kappa+2m)$ from $i$ to $j$, we can first pick a path on the level of blocks: The number of possible block-level paths is lower bounded by $\binom{\kappa+2m}{m}$ because we can choose $m$ in $(\kappa + 2m)$ steps to go in the counter clockwise direction. After a block-level path is fixed, we can choose which specific vertices in the blocks we want to land at, and there are $d^{\kappa+2m-2}$ choices. Thus we see that
\[(M^{\kappa + 2m})_{i j} \geq \binom{\kappa+2m}{m} d^{\kappa+2m-2}.\]
Substituting this into \eqref{equ:graph-Laplacian-inverse:e1} gives
\begin{align}\label{equ:graph-Laplacian-inverse:e2}
    \left((I + \ell \cdot L)^{-1}\right)_{i j} \geq{}& \frac{\ell^\kappa d^{\kappa-2}}{(2 d \ell+1)^{\kappa+1}}\sum_{m=0}^\infty \frac{\ell^{2m} d^{2m}}{(2 d \ell + 1)^{2m}} \binom{\kappa+2m}{m}.
\end{align}
Let $m = 0$ will give that $\left((I + \ell \cdot L)^{-1}\right)_{i j} \geq \frac{\kappa}{d^2 (2d\ell + 1)} \cdot \left(\frac{d\ell}{2d\ell+1}\right)^\kappa$, which shows the first claim of \Cref{lemma:inverse-graph-lower-bound}. Now we proceed to show the second claim of \Cref{lemma:inverse-graph-lower-bound}.

By \Cref{lemma:combinatorial-number-near-center}, we know that when $\kappa = \epsilon m$, we have that
\begin{align*}
    \binom{\kappa + 2m}{m} ={}& \binom{(2+\epsilon)m}{m}\\
    >{}& \frac{1}{\sqrt{2\pi}} e^{-\frac{1}{6m}} m^{-\frac{1}{2}} \frac{(2 + \epsilon)^{(2 + \epsilon)m}}{(1 + \epsilon)^{(1 + \epsilon)m}} \cdot \left(\frac{2+\epsilon}{1+\epsilon}\right)^{\frac{1}{2}}\\
    \geq{}& \frac{1}{2\sqrt{2\pi}} m^{-\frac{1}{2}} \left(\frac{2 + \epsilon}{(1 + \epsilon)^{\frac{1+\epsilon}{2+\epsilon}}}\right)^{(2 + \epsilon)m}.
\end{align*}
For any $m > \kappa$, the inequality we just showed can help us bound a term in the summation of \eqref{equ:graph-Laplacian-inverse:e2} below:
\begin{align*}
    &\frac{\ell^\kappa d^{\kappa-2}}{(2d\ell+1)^{\kappa+1}}\cdot \frac{\ell^{2m} d^{2m}}{(2d\ell + 1)^{2m}} \binom{\kappa+2m}{m}\\
    \geq{}& \frac{1}{d^2(2d\ell+1)} \cdot \frac{1}{2^{\kappa + 2m}} \binom{\kappa+2m}{m} \cdot \left(1 - \frac{1}{2d\ell+1}\right)^{\kappa + 2m}\\
    \geq{}& \frac{1}{2\sqrt{2\pi}\cdot \sqrt{\frac{\kappa}{\epsilon}} \cdot d^2 (2d\ell + 1)} \cdot \left(\frac{2 + \epsilon}{2 \cdot (1 + \epsilon)^{\frac{1+\epsilon}{2+\epsilon}}}\right)^{(2 + \epsilon)\kappa/\epsilon} \cdot \left(1 - \frac{1}{2d\ell+1}\right)^{(1 + \frac{2}{\epsilon})\kappa}\\
    \geq{}& \frac{1}{2\sqrt{2\pi}\cdot \sqrt{\frac{\kappa}{\epsilon}} \cdot d^2 (2d\ell + 1)} \cdot \left(\left(1 - \frac{\epsilon^2}{2}\right)^{\frac{1}{\epsilon}} \cdot \left(1 - \frac{1}{2d\ell+1}\right)^{\frac{1}{\epsilon}}\right)^{(2 + \epsilon) \kappa},
\end{align*}
where the last line follows from \Cref{lemma:numerical-lower-bound}.

Thus, we obtain that the following inequality holds for arbitrary $\epsilon \in (0, 1)$:
\begin{align}\label{equ:graph-Laplacian-inverse:e3}
    \left((I + L)^{-1}\right)_{i j} \geq \frac{1}{2\sqrt{2\pi}\cdot \sqrt{\frac{\kappa}{\epsilon}} \cdot d^2 (2d\ell + 1)} \cdot \left(\left(1 - \frac{\epsilon^2}{2}\right)^{\frac{2}{\epsilon}+1} \cdot \left(1 - \frac{1}{2d\ell+1}\right)^{\frac{2}{\epsilon}+1}\right)^{\kappa}.
\end{align}

By setting $\epsilon$ such that $1/\epsilon = \left[2(d\ell)^{\frac{1}{2}}\right]$ in \eqref{equ:graph-Laplacian-inverse:e3}, we obtain that:
\begin{align}\label{equ:graph-Laplacian-inverse:e4}
    \left((I + \ell \cdot L)^{-1}\right)_{i j} \geq{}& \frac{1}{2\sqrt{2\pi}\cdot \sqrt{2\kappa \cdot \sqrt{d\ell}} \cdot d^2 (2d\ell + 1)} \cdot \left(\left(1 - \frac{1}{2d\ell}\right)^{4\sqrt{d\ell}+1} \cdot \left(1 - \frac{1}{2d\ell+1}\right)^{4\sqrt{d\ell}+1}\right)^{\kappa}\nonumber\\
    \geq{}& \frac{1}{4\sqrt{\pi \cdot \kappa \cdot \sqrt{d\ell}} \cdot d^2 (2d\ell + 1)} \cdot \left(\left(1 - \frac{4\sqrt{d\ell} + 1}{2d\ell} \right) \cdot \left(1 - \frac{4\sqrt{d\ell} + 1}{2d\ell+1}\right)\right)^{\kappa}\nonumber\\
    \geq{}& \frac{1}{4\sqrt{\pi \cdot \kappa \cdot \sqrt{d\ell}} \cdot d^2 (2d\ell + 1)} \cdot \left(1 - \frac{4}{\sqrt{d\ell}}\right)^{\kappa}.
\end{align}
\end{proof}

\subsubsection*{Step 3: Combine Temporal and Spatial Lower Bounds}
Combining the results of Steps 1 and 2 together, we know that the competitive ratio of any decentralized online algorithm is lower bounded by
\[\max\{1 + \frac{\mu^3(1 - \sqrt{\lambda_T})^2}{48(\mu + 1)^2 \ell_T}\cdot \lambda_T^k, 1 + \Omega(\lambda_S^{r})\} = 1 + \Omega(\lambda^k) + \Omega(\lambda_S^r).\]

\section{Proof of Corollary \ref{coro:resource-augmentation}}\label{apx:coro:resource-augmentation}

In this appendix we prove a resource augmentation bound for LPC.  To simplify the notation, we define the shorthand  $a_T := \ell_T/\mu$ and $a_S := \ell_S/\mu$. $a_T$ and $a_S$ are positive real numbers. We first show two lemmas about the relationships between the decay factors $\rho_T$ and $\lambda_T$, and $\rho_S$ and $\lambda_S$.

\begin{lemma}\label{lemma:temporal-upper-and-lower}
Under the assumptions of \Cref{thm:networked-exp-decay}, we have $\rho_T^4 \leq \lambda_T \leq \rho_T^2.$
\end{lemma}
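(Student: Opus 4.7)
\textbf{Proof plan for Lemma \ref{lemma:temporal-upper-and-lower}.}

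The plan is to reduce everything to a single scalar function and then check two inequalities separately. Define
\[
f(x) \;:=\; 1 - \frac{2}{\sqrt{1+x}+1} \;=\; \frac{\sqrt{1+x}-1}{\sqrt{1+x}+1} \;=\; \frac{(\sqrt{1+x}-1)^{2}}{x}.
\]
Writing $a_T := \ell_T/\mu$, the definitions of $\rho_T$ (in \Cref{thm:networked-exp-decay}) and $\lambda_T$ (in \Cref{thm:bottleneck}) yield the identities
\[
\rho_T^{2} \;=\; f(2 a_T), \qquad \sqrt{\lambda_T} \;=\; f(4 a_T).
\]
So the claim $\rho_T^{4}\le\lambda_T\le\rho_T^{2}$ is exactly $f(2a_T)^{2}\le f(4a_T)^{2}\le f(2a_T)$, i.e.\ $f(2a_T) \le f(4a_T) \le \sqrt{f(2a_T)}$.

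For the lower bound $\rho_T^{4}\le \lambda_T$, I would simply argue monotonicity: the first representation of $f$ shows $f$ is strictly increasing on $[0,\infty)$, so $f(2a_T)\le f(4a_T)$, and squaring (both sides in $[0,1)$) gives $\rho_T^{4}=f(2a_T)^{2}\le f(4a_T)^{2}=\lambda_T$.

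The main obstacle is the upper bound $\lambda_T \le \rho_T^{2}$, equivalently $f(4a_T)^{2}\le f(2a_T)$. Here my plan is to use the third form of $f$ to rewrite this as
\[
\frac{(\sqrt{1+4a_T}-1)^{4}}{16 a_T^{2}} \;\le\; \frac{(\sqrt{1+2a_T}-1)^{2}}{2 a_T},
\]
take square roots, and introduce the substitution $u := \sqrt{2 a_T}$, $v := \sqrt{1+2 a_T}$ (so $v^{2}-u^{2}=1$ and $\sqrt{1+4a_T}=\sqrt{u^{2}+v^{2}}$). After clearing denominators, the inequality becomes
\[
u(v+1) \;\le\; v^{2} + \sqrt{u^{2}+v^{2}}.
\]
Using $v^{2}=u^{2}+1$ and rearranging via $\sqrt{u^{2}+1}-u = 1/(\sqrt{u^{2}+1}+u)$, this further reduces to
\[
\frac{u}{u+\sqrt{u^{2}+1}} \;\le\; 1 - u + \sqrt{2 u^{2}+1}.
\]
The left-hand side is always strictly less than $1$ (for $u\ge 0$), while the right-hand side is at least $1$ because $\sqrt{2u^{2}+1}\ge u$. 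So the inequality holds, completing the upper bound.

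Hence both inequalities $\rho_T^{4}\le\lambda_T$ and $\lambda_T\le\rho_T^{2}$ hold, and the lemma follows. The only delicate step is the reduction leading to the final comparison, after which the conclusion is essentially immediate; I do not anticipate needing any assumptions beyond $a_T\ge 0$ (which is guaranteed by \Cref{assump:costs-and-feasible-sets}).
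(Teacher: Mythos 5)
Your proof is correct, and your lower-bound argument ($f$ increasing, then squaring) is exactly the paper's. For the upper bound $\lambda_T \le \rho_T^2$ you take a genuinely different route: the paper simply computes the difference $\lambda_T - \rho_T^2$ over a common denominator and factors the numerator as $4p(p-q)$ with $p = \sqrt{1+2a_T} \le q = \sqrt{1+4a_T}$ (the key identity being $q^2 + 1 = 2p^2$), which is manifestly nonpositive in one line. You instead use the third representation $f(x) = (\sqrt{1+x}-1)^2/x$, take square roots, and substitute $u,v$; I checked your chain and each claimed equivalence holds, ending in the comparison of a quantity strictly below $1$ with one at least $1$. Two small remarks. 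First, your reduction to $u(v+1) \le v^2 + \sqrt{u^2+v^2}$ is only reached if you ``clear denominators'' by rationalizing both $(\sqrt{u^2+v^2}-1)^2 = 4u^4/(\sqrt{u^2+v^2}+1)^2$ and $v-1 = u^2/(v+1)$ before cancelling $2u^3$; a naive expansion of $(\sqrt{u^2+v^2}-1)^2$ lands instead on the equivalent but much less obviously true form $v^2 - uv + u \le \sqrt{u^2+v^2}$, so you should spell out which manipulation you mean. Second, several steps divide by $u$ (equivalently by $a_T$), so the degenerate case $\ell_T = 0$ should be dispatched separately, where $\rho_T = \lambda_T = 0$ and the claim is trivial. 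With those two points made explicit, your argument is a complete, if more circuitous, alternative to the paper's one-line factorization.
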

\begin{proof}[Proof of Lemma \ref{lemma:temporal-upper-and-lower}]
Recall that $\rho_T$ is given by
\[\rho_T = \sqrt{1 - \frac{2}{\sqrt{1 + 2a_T}+1}}\]
in \Cref{thm:networked-exp-decay}. Thus we see that
\begin{align*}
    \rho_T^4 = \left(1 - \frac{2}{\sqrt{1 + 2a_T}+1}\right)^2 \leq \left(1 - \frac{2}{\sqrt{1 + 4a_T}+1}\right)^2 = \lambda_T.
\end{align*}
On the other hand, we have that
\[\lambda_T - \rho_T^2 = \left(1 - \frac{2}{\sqrt{1 + 4a_T}+1}\right)^2 - 1 + \frac{2}{\sqrt{1 + 2a_T}+1} = \frac{4\sqrt{(1+2a_T)}\left(\sqrt{1 + 2a_T} - \sqrt{1+4a_T}\right)}{\left(\sqrt{1+2a_T} + 1\right)\left(\sqrt{1 + 4a_T}+1\right)^2} \leq 0.\]
\end{proof}

\begin{lemma}\label{lemma:spatial-upper-and-lower}
Under the assumptions of \Cref{thm:networked-exp-decay}, we have $\rho_S^{32} \leq \lambda_S$.
\end{lemma}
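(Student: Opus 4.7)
\smallskip

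The plan is to parametrize everything in terms of $a := \Delta\ell_S/\mu \geq 0$ and $u := \sqrt{1+a}$, so that Theorem \ref{thm:networked-exp-decay} gives $\rho_S^2 = 1 - 2/(u+1) = (u-1)/(u+1)$. Writing $s := \rho_S^2 \in [0,1)$, a short computation using $a = u^2 - 1$ and $1-s = 2/(u+1)$ yields the identity $a/(3(1+a)) = 4s/(3(1+s)^2)$. From the piecewise definition of $\lambda_S$, the lower bound $\lambda_S \geq a/(3(1+a))$ holds for \emph{all} $a \geq 0$ (either directly or as one argument of the $\max$), while additionally $\lambda_S \geq (1 - 4\sqrt{3}/\sqrt{a})^2$ whenever $a > 48$. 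I then split into two regimes at a threshold $A$ (preliminary checks suggest $A = 80$ or $A = 100$ works).

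For Case 1, $0 \leq a \leq A$: using $\lambda_S \geq 4s/(3(1+s)^2)$, the goal $\rho_S^{32} = s^{16} \leq \lambda_S$ reduces to $3 s^{15} (1+s)^2 \leq 4$. The left-hand side is strictly increasing in $s$ on $[0,1)$ and $s$ is increasing in $a$, so it suffices to verify the inequality at $a = A$. Taking $A = 80$ gives $s = 4/5$ and $3 (4/5)^{15}(9/5)^2 = 243 \cdot 4^{15}/5^{17}$, which is a small explicit rational strictly less than $4$.

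For Case 2, $a \geq A$: using $\lambda_S \geq (1 - 4\sqrt{3}/\sqrt{a})^2$ (positive since $A > 48$), take square roots to reduce the claim to $1 - \rho_S^{16} \geq 4\sqrt{3}/\sqrt{a}$. I factor
\[
1 - \rho_S^{16} \;=\; 1 - s^8 \;=\; (1-s)(1+s)(1+s^2)(1+s^4) \;=\; \frac{2}{u+1}\,P(s),
\]
where $P(s) := (1+s)(1+s^2)(1+s^4)$ is increasing in $s$, hence $P(s) \geq P(s_A)$ for $s \geq s_A := (\sqrt{1+A}-1)/(\sqrt{1+A}+1)$. Using $\sqrt{a} = \sqrt{u-1}\sqrt{u+1}$ and squaring, the target becomes the linear inequality $P(s_A)^2 (u-1) \geq 12(u+1)$. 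A numerical check at $A = 80$ gives $P(4/5) = (9/5)(41/25)(881/625) \approx 4.16$, so $P(s_A)^2 \approx 17.3 > 12$. The linear inequality then holds for all $u \geq (P(s_A)^2+12)/(P(s_A)^2-12)$, a constant comfortably below $u_A = 9$, closing the case.

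The main obstacle is purely one of numerical calibration: the threshold $A$ must be large enough that $P(s_A)^2 > 12$ (otherwise the Case 2 linear inequality has no solution) and that the resulting lower bound on $u$ does not exceed $\sqrt{1+A}$, yet small enough that the Case 1 polynomial inequality $3 s_A^{15}(1+s_A)^2 \leq 4$ still holds. This two-sided squeeze is tight but has slack at $A = 80$, and the remaining verifications are routine algebraic manipulations on explicit rational numbers.
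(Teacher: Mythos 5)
Your proof is correct; I checked the identity $a/(3(1+a)) = 4s/(3(1+s)^2)$ (with $a=\Delta\ell_S/\mu$, $u=\sqrt{1+a}$, $s=\rho_S^2=(u-1)/(u+1)$), the Case~1 verification $3(4/5)^{15}(9/5)^2 = 243\cdot 4^{15}/5^{17} \approx 0.34 < 4$, and the Case~2 numbers $P(4/5)^2 \approx 17.3 > 12$ with the resulting linear threshold $u \geq 29.3/5.3 \approx 5.5 < 9$. The overall strategy is the same as the paper's — split on the magnitude of $\Delta\ell_S/\mu$ and compare powers of $\rho_S$ against whichever branch of $\lambda_S$ is available — but the execution is genuinely different and tighter. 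The paper uses three cases with thresholds at $1$ and $224$: for small $a$ it proves the much stronger $\rho_S^4 \leq \lambda_S$, for the middle range it uses the crude bounds $\rho_S^2 \leq 7/8$ and $\lambda_S \geq 1/6$ to get $\rho_S^{28}\leq\lambda_S$, and for large $a$ it invokes an auxiliary lemma $(1-x)^{2n_0}\leq 1-n_0x$. You instead collapse the small and middle ranges into a single case via the exact identity, which turns the claim on all of $[0,80]$ into one monotone polynomial inequality checked at the endpoint, and in the large-$a$ case you replace the paper's auxiliary inequality with the exact factorization $1-s^8=(1-s)(1+s)(1+s^2)(1+s^4)$, reducing everything to a linear inequality in $u$. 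Your route buys exactness where the paper is loose (and avoids proving a separate lemma); the paper's route is arguably more modular since its $(1-x)^{2n_0}\leq 1-n_0x$ lemma is reusable, but either argument suffices for the stated exponent $32$.
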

\begin{proof}[Proof of \Cref{lemma:spatial-upper-and-lower}]
Recall that $\rho_T$ is given by
\[\rho_S = \sqrt{1 - \frac{2}{\sqrt{1 + \Delta a_S}+1}}\]
in \Cref{thm:networked-exp-decay}. We consider the following three cases separately.

\textbf{Case 1}: $\Delta a_S \geq 224$. We have $\rho_S^{32} \leq \lambda_S$ in this case.

We first show that the following inequality holds for any positive integer $n_0$ and $x \in [0, 1/(2n_0)]$:
\begin{align}\label{lemma:spatial-upper-and-lower:e1}
    (1 - x)^{2n_0} \leq 1 - n_0 x.
\end{align}
To see this, define function $g(x) = (1 - x)^{2n_0} + n_0 x - 1$. Note that $g$ is a convex function with $g(0) = 0$ and
\[g\left(\frac{1}{2n_0}\right) = \left(1 - \frac{1}{2n_0}\right)^{2n_0} - \frac{1}{2} \leq e^{-1} - \frac{1}{2} < 0.\]
Thus, we see that $g(x) \leq 0$ holds for all $x \in [0, 1/(2n_0)]$. Hence \eqref{lemma:spatial-upper-and-lower:e1} holds.

By \eqref{lemma:spatial-upper-and-lower:e1}, we see that
\begin{align*}
    \rho_S^{16} &= \left(1 - \frac{2}{\sqrt{1 + \Delta a_S} + 1}\right)^8\\
    &\leq 1 - \frac{8}{\sqrt{1 + \Delta a_S} + 1}\\
    &\leq 1 - \frac{4\sqrt{3}}{\sqrt{\Delta a_S}}\\
    &= \sqrt{\lambda_S}.
\end{align*}

\textbf{Case 2}: $1 \leq \Delta a_S < 224$. We have $\rho_S^{28} \leq \lambda_S$ in this case.

To see this, note that $\rho_S^2 \leq 1 - \frac{2}{\sqrt{1 + 224} + 1} = \frac{7}{8}$, and by \Cref{thm:bottleneck}, $\lambda_S \geq \frac{1}{3 + 3\cdot 1} = \frac{1}{6}$. Therefore, we see that
\[\rho_S^{28} \leq \left(\frac{7}{8}\right)^{14} \leq \frac{1}{6} \leq \lambda_S.\]

\textbf{Case 3}: $\Delta a_S < 1$. We have $\rho_S^4 \leq \lambda_S$ in this case.

To see this, note that
\begin{align*}
    \rho_S^2 = \frac{\sqrt{1 + \Delta a_S} - 1}{\sqrt{1 + \Delta a_S} + 1} \leq \frac{\sqrt{\Delta a_S}}{4}.
\end{align*}
Thus we see that
\begin{align*}
    \rho_S^4 \leq \frac{\Delta a_S}{16} \leq \frac{\Delta a_S}{3 + 3 \Delta a_S} = \lambda_S.
\end{align*}
\end{proof}

Now we come back to the proof of Corollary \ref{coro:resource-augmentation}. By \Cref{coro:CR-LPC} and \Cref{thm:bottleneck}, we know that the optimal competitive ratio is lower bounded by
\begin{align*}
    c(k^*, r^*) \geq 1 + C_\lambda \left(\lambda_T^{k^*} + \lambda_S^{r^*}\right)
\end{align*}
and LPC's competitive ratio is upper bounded by
\[c_{LPC}(k, r) := 1 + C_\rho \left(C_3(r)^2 \cdot \rho_T^{k} + h(r)^2 \cdot \rho_S^{r}\right),\]
where $C_\lambda$ and $C_\rho$ are some positive constants. To achieve $c_{LPC}(k, r) \leq c(k^*, r^*)$, it suffices to guarantee that
\[C_\rho \cdot C_3(r)^2 \cdot \rho_T^k \leq C_\lambda \lambda_T^{k^*} \text{ and } C_\rho \cdot h(r)^2 \cdot \rho_S^{r} \leq C_\lambda \lambda_S^{r^*}.\]
Note that $C_3(r)$ can be upper bounded by some constant and $h(r)^2 \leq poly(r) \cdot \rho_S^{-\frac{r}{2}}$ under our assumptions. Applying Lemma \ref{lemma:temporal-upper-and-lower} and Lemma \ref{lemma:spatial-upper-and-lower} finishes the proof.

\end{document}